\title{Congruence of Linear Symplectic Forms by the Symplectic Group}
\author{Luchen Shi, Sunay Joshi, Ritwick Bhargava}
\date{December 2022}
\providecommand{\keywords}[1]
{
  \small	
  \textbf{Keywords:} #1
}
\newtheorem{theorem}{Theorem}[section]
\newtheorem{corollary}{Corollary}[theorem]
\newtheorem{lemma}[theorem]{Lemma}
\newtheorem{question}{Question}
\newtheorem{Proposition}[theorem]{Proposition}
\newtheorem{conjecture}{Conjecture}
\theoremstyle{definition}
\newtheorem{definition}{Definition}[section]
\theoremstyle{definition}
\newtheorem*{example}{Example}
\DeclareMathOperator{\Symp}{Symp}
\DeclareMathOperator{\Diff}{Diff}
\DeclareMathOperator{\Sp}{Sp}
\begin{document}

\newcommand{\ssep}{\mid}
\newcommand*{\R}{\mathbb{R}}
\newcommand{\omegastd}{\omega_{\mathrm{std}}}
\newcommand{\omegalstd}{\omega_{\mathrm{lstd}}}
\maketitle

\begin{abstract}

  This paper concerns the action of linear symplectomorphisms on linear symplectic forms by conjugation in even dimensions. We prove that pfaffian and $-\frac{1}{2}\operatorname{tr}(JA)$ (sum function) of $A$ are invariants on the action. We use these invariants to provide a complete description of the orbit space in dimension four. In addition, we investigate the geometric shapes of the individual orbits in dimension four.
  
  In symplectic geometry, our classification result in dimension four provides a necessary condition for two symplectic forms on $\R^{4}$ to be intertwined by symplectomorphisms of the standard symplectic form. This stands in contrast to the lack of local invariants under diffeomorphisms. Furthermore, we determine global invariants of a class of symplectic forms, and we study an extension of a corollary of the Curry-Pelayo-Tang Stability Theorem.
  
  Lastly, we extend our results and investigate the action of linear symplectomorphisms on linear symplectic forms in dimension $2n$. We determine $n$ invariants of linear symplectic forms under this action, namely, $s_k(A)$ we defined as $\sigma_k(A)$ which is the coefficient of term $t^k$ in the polynomial expansion of pfaffian of $tJ+A$.

\end{abstract}

\keywords{Symplectic Forms, Symplectomorphisms, Invariants, Orbit Space, Symplectic Geometry}

\section{Introduction}

\subsection{Motivation}
Symplectic structures on $\R^4$ have been studied for decades.There have been many important results on the diffeomorphism classes of symplectic structures (the equivalence classes of symplectic structures under diffeomorphisms): see references \cite{Curry2018} \cite{Darboux1882} \cite{ElGr} \cite{Gr85} \cite{McD88}.
In this paper, we restrict the use of diffeomorphisms to a narrower scope: the set of symplectomorphisms of $(\R^4, \omegastd)$.
The symplectomorphism group $\Symp(\R^4, \omegastd)$ consists of diffeomorphisms of $\R^4$ which preserve the standard symplectic form $\omegastd$. The structure of $\Symp(\R^4, \omegastd)$ has been discussed in references \cite{McD04} \cite{Ld01} \cite{Salamon}.
The elements of this group do not preserve symplectic forms other than the scalar multiples of $\omegastd$ in general, and hence, we would like to know how a general symplectic form on $\R^4$ can be acted by $\Symp(\R^4, \omegastd)$.

In this paper, we settle the linear version of this general question.
The answer to the linear problem yields a necessary condition for a symplectic form on $\R^4$ to be transformed to another by $\Symp(\R^4, \omegastd)$, in the sense that it provides both local invariants (the Pfaffian and the sum function) and global invariants under symplectomorphisms. Note that this stands in contrast to the lack of local invariants under diffeomorphisms. Furthermore, based on our proof in four dimensions, we prove that the Pfaffian and sum function remain invariant in every even dimension. These results may help in understanding the structure of the symplectomorphism group $\Symp(\R^{2n}, \omegastd)$.

We also investigate the geometry of the orbits of the linear group action in dimension four.

\subsection{Goals}

(Note: Throughout this paper, all vectors and matrices are written with respect to the standard basis.)

Let
\[ S(2n,\R)=\{ A\in M(2n,\R) \ssep A^T = -A, \det{A} \neq 0\} \]
denote the set of all linear symplectic forms on $\R^{2n}$, and let the standard linear symplectic form on $\R^{2n}$ have matrix representation
\[
J =
\begin{bmatrix}
J_0 & O & \dots & O \\
O & J_0 & \ddots & \vdots \\
\vdots & \ddots & \ddots & O \\
O & \dots & O & J_0
\end{bmatrix},
\]

where 
$J_0 = 
\begin{bmatrix}
0 & 1 \\
-1 & 0
\end{bmatrix}$. Denote the set
\[ \Sp(2n) = \{ P\in M(2n,\R) \ssep P^T J P = J\} \]as the group of linear symplectomorphisms on $\R^{2n}$.Consider the group action $\rho : \Sp(2n) \times S(2n, \R) \rightarrow S(2n, \R)$ given by $\rho(P, A) = P^T A P$ for $P\in \Sp(2n)$ and $A\in S(2n,\R)$. In the case of $\R^4$, we have the matrix representation $J$ as below according to the above definitions:
\[
J = 
\begin{bmatrix}
0 & 1 & 0 & 0 \\
-1 & 0 & 0 & 0 \\
0 & 0 & 0 & 1 \\
0 & 0 & -1 & 0
\end{bmatrix},
\]

\begin{question}
  What is the orbit space $S(4,\R) / \Sp(4)$?
\end{question}

\begin{question}
  Which quantities are invariant under the group action $\rho$ in $\R^{2n}$?
\end{question}

\begin{question}
  What is the shape of each orbit, when considered as a subset of $\R^6$?
\end{question}

\begin{question}
  How does the linear result apply to the nonlinear case?
\end{question}

\begin{question}
  What is the orbit space classification in higher dimensions? 
\end{question}

The primary goal of this paper is to answer Question 1 and Question 2. At the end of the paper, we partially address Questions 3, 4, and 5.

The structure of this paper is as follows: in Sections 3 and 4, we present invariants of the group action $\rho$ in even dimensions as well as the Orbit Space Classification of $S(4,\R)/\Sp(4)$. We then explore the geometric properties of the individual orbits of $S(4,\R)/\Sp(4)$ in Section 5. Finally, we employ the language of symplectic forms to study the nonlinear case in $\R^4$ in Section 6 and to establish higher dimensional linear results in Section 7.

\section{Main Results}

\subsection{Linear Case in $\R^4$}

\begin{definition}[Pfaffian of $4\times 4$ skew-symmetric matrix]

The \textit{Pfaffian} of the $4\times 4$ skew-symmetric matrix
\[
A = 
\begin{bmatrix}
0 & a & b & c \\
-a & 0 & d & e \\
-b & -d & 0 & f \\
-c & -e & -f & 0
\end{bmatrix},
\]

is the quantity $\operatorname{Pf}(A) = af-be+cd$.

\end{definition}

It is well-known that for any $A \in S(4, \R)$, $\det(A) = \operatorname{Pf}(A)^2$ \cite[Proposition 2.2]{PfaffianProof}.

\begin{definition}[Sum function of $4\times 4$ skew-symmetric matrix]

The \textit{sum function} of the $4\times 4$ skew-symmetric matrix
\[
A = 
\begin{bmatrix}
0 & a & b & c \\
-a & 0 & d & e \\
-b & -d & 0 & f \\
-c & -e & -f & 0
\end{bmatrix},
\]

is the quantity $\operatorname{s}(A) = a+f$.

\end{definition}

\begin{theorem}[Invariants in $\R^4$]

The Pfaffian and sum function are invariant under the group action $\rho$.

\end{theorem}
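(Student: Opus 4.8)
The plan is to treat the two invariants separately, in each case reducing invariance to a clean algebraic identity satisfied by every symplectic $P$. For the Pfaffian I would invoke the classical transformation law $\operatorname{Pf}(P^T A P) = \det(P)\operatorname{Pf}(A)$, valid for any $P \in M(4,\R)$ and any skew-symmetric $A$. Granting this, invariance reduces to showing $\det(P) = 1$ for all $P \in \Sp(4)$, which I would obtain by applying the transformation law to $A = J$: since $P^T J P = J$ and $\operatorname{Pf}(J) = 1 \neq 0$, the law gives $1 = \det(P)\cdot 1$, hence $\det(P) = 1$, and therefore $\operatorname{Pf}(P^T A P) = \det(P)\operatorname{Pf}(A) = \operatorname{Pf}(A)$. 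If one prefers not to cite the transformation law, an alternative uses the already-stated identity $\det(A) = \operatorname{Pf}(A)^2$: taking determinants in $P^T J P = J$ forces $\det(P)^2 = 1$, so $\det(P^T A P) = \det(A)$ and hence $\operatorname{Pf}(P^T A P) = \pm\operatorname{Pf}(A)$; the sign is then pinned down by connectedness of $\Sp(4)$, continuity of $P \mapsto \operatorname{Pf}(P^T A P)$, and the fact that $\operatorname{Pf}(A) \neq 0$.

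For the sum function the first step is to re-express it intrinsically. A direct computation of $JA$ shows its diagonal entries are $-a, -a, -f, -f$, so $\operatorname{tr}(JA) = -2(a+f)$ and therefore $\operatorname{s}(A) = -\tfrac12\operatorname{tr}(JA)$. This matrix formula is exactly what makes invariance transparent, since the trace interacts well with conjugation. The key algebraic fact I would establish next is that $P^T J P = J$ implies $P J P^T = J$ (equivalently, that $P^T$ is itself symplectic). This follows from $J^2 = -I$: right-multiplying $P^T J P = J$ by $P^{-1}$ and then by $J^{-1}$ gives $P^T = J P^{-1} J^{-1}$, whence $P J P^T = P J^2 P^{-1} J^{-1} = -J^{-1} = J$.

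With these two ingredients, invariance of the sum function is immediate from cyclicity of the trace:
\[
\operatorname{tr}\bigl(J (P^T A P)\bigr) = \operatorname{tr}\bigl(P J P^T A\bigr) = \operatorname{tr}(J A),
\]
so $\operatorname{s}(P^T A P) = \operatorname{s}(A)$. I expect no serious obstacle in either argument; the only points that genuinely require care are the determinant sign in the Pfaffian step (a priori $\det P$ could equal $-1$) and the passage from $P^T J P = J$ to $P J P^T = J$. Both are resolved by the standard structure of the symplectic group, and everything else is routine manipulation of the $4\times 4$ matrices $J$ and $A$.
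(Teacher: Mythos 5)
Your proposal is correct. The Pfaffian half is essentially the paper's argument: the transformation law $\operatorname{Pf}(P^TAP)=\det(P)\operatorname{Pf}(A)$ combined with $\det(P)=1$ for symplectic $P$ (your derivation of $\det(P)=1$ by applying the law to $A=J$ with $\operatorname{Pf}(J)=1$ is a clean way to justify what the paper simply cites as a known lemma; your alternative via connectedness of $\Sp(4)$ also works but imports a heavier fact than needed). The sum-function half, however, takes a genuinely different route. The paper proves it by writing the entries of $P^TAP$ as $\omega(c_i,c_j)$ for the columns $c_i$ of $P$, expanding $\sum_k \omega(c_{2k-1},c_{2k})$ by bilinearity into $\sum_{i<j}\omegalstd(r_i,r_j)\,\omega(e_i,e_j)$, and then using that the rows $r_i$ of $P$ form a symplectic basis, so that only the terms $\omega(e_{2k-1},e_{2k})$ survive. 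You instead observe $\operatorname{s}(A)=-\tfrac12\operatorname{tr}(JA)$ (the diagonal of $JA$ is indeed $-a,-a,-f,-f$) and conclude by cyclicity of the trace together with $PJP^T=J$. Both arguments hinge on the same key lemma — that $P^T$ is again symplectic, which the paper proves as its Lemma 3.4 by essentially your manipulation with $J^2=-I$ — but your trace identity replaces the paper's entrywise basis expansion with a one-line conjugation-invariance argument, is shorter, and visibly generalizes to $S(2n,\R)$ with no change. The paper's basis-expansion method, on the other hand, is the template it reuses for its Equivalence Lemma and orbit classification, so it earns its keep elsewhere.
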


\begin{theorem}[Orbit Space Classification in $\R^4$]

The orbit space $S(4,\R) / \Sp(4)$ consists of the following families of distinct orbits:

\begin{itemize}

    \item For all $p > 0$, 
    \[\mathcal{J}_p^{+} = \{\sqrt{p}J\}\]
    
    \item For all $p > 0$, 
    \[\mathcal{J}_p^{-} = \{-\sqrt{p}J\}\]
    
    \item For all $p > 0$, $q \in \R$, 
    \[\mathcal{A}_{p,q}^{+} = \{ A \in S(4,\R) \mid A \not = \pm\sqrt{p} J, \operatorname{Pf}(A) = p, \operatorname{s}(A) = q\}\]
    
    \item For all $p < 0$, $q \in \R$, 
    \[\mathcal{A}_{p,q}^{-} = \{ A \in S(4,\R) \mid \operatorname{Pf}(A) = p, \operatorname{s}(A) = q\}\]
    
\end{itemize}

\end{theorem}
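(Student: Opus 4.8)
The plan is to first use the invariants to reduce the statement to a single transitivity assertion, and then to convert the congruence action into a conjugation action in which the linear algebra is transparent. By the invariance of the Pfaffian and sum function (proved above), both $\operatorname{Pf}$ and $\operatorname{s}$ are constant on orbits, so every orbit lies in a level set $L_{p,q}=\{A\in S(4,\R)\mid \operatorname{Pf}(A)=p,\ \operatorname{s}(A)=q\}$; since $\det A=\operatorname{Pf}(A)^2\neq 0$ forces $p\neq 0$, the four families visibly partition $S(4,\R)$ and are pairwise distinct. The singletons $\mathcal{J}_p^{\pm}=\{\pm\sqrt{p}\,J\}$ are orbits because $P^{T}(\pm\sqrt{p}\,J)P=\pm\sqrt{p}\,J$ for every $P\in\Sp(4)$, and they lie in $L_{p,\pm 2\sqrt{p}}$. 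Everything therefore reduces to showing that for each admissible $(p,q)$ the set obtained from $L_{p,q}$ by deleting any fixed points it contains --- namely $\pm\sqrt{p}\,J$ when $p>0$ and $q=\pm 2\sqrt{p}$, and none otherwise --- is a single $\Sp(4)$-orbit.

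To make the action tractable I would pass to $B=-JA$. Using $J^{-1}=-J$ and $P^{T}JP=J$ (hence $P^{T}=JP^{-1}J^{-1}$), a short computation gives $-J(P^{T}AP)=P^{-1}(-JA)P$, so $A\mapsto P^{T}AP$ corresponds exactly to $B\mapsto P^{-1}BP$; thus orbits of $A$ under $\rho$ are in bijection with $\Sp(4)$-conjugacy classes of $B$. Here $JB=A$ is skew-symmetric, i.e.\ $B$ is skew-Hamiltonian ($B^{T}J=JB$), and $\det(tI+B)=\det(tJ+A)=\operatorname{Pf}(tJ+A)^2=(t^2+qt+p)^2$, so the eigenvalues of $B$ are the roots of $t^2-qt+p$, each of algebraic multiplicity two. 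I would then organize the argument by the discriminant $\Delta=q^2-4p$.

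The structural engine is that the Jordan blocks of a skew-Hamiltonian matrix occur in equal-sized pairs, which in dimension four leaves only three situations. When $\Delta>0$ (automatic if $p<0$) there are two distinct real eigenvalues $\alpha\neq\beta$ (the roots of $x^2-qx+p$) and $B$ is semisimple; using $B^{T}J=JB$ I can check the two eigenplanes are $J$-orthogonal and each $J$-nondegenerate, and that the restriction of the form $A$ to each equals $\alpha$ (resp.\ $\beta$) times the restriction of $J$, so a symplectic basis of each plane yields the normal form $\alpha J_0\oplus\beta J_0$. When $\Delta<0$ (forcing $p>0$) the eigenvalues are a conjugate pair, $B$ is again semisimple, and the reduction produces a rotational normal form such as the matrix with $(a,b,c,d,e,f)=(\alpha,0,\beta,\beta,0,\alpha)$ where $\alpha=q/2,\ \beta=\sqrt{p-q^2/4}$. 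When $\Delta=0$ (forcing $p>0$) there is a quadruple real eigenvalue $\pm\sqrt{p}$, and pairing permits exactly two Jordan types: the semisimple type $B=\pm\sqrt{p}\,I$, giving the fixed point $\pm\sqrt{p}\,J$, and a pair of $2\times 2$ blocks, giving a single non-central representative with the same $(p,q)$ (one exists, e.g.\ a matrix agreeing with $\sqrt{p}\,J$ except for a single off-diagonal Jordan entry). These three cases reproduce the four families, and exhibiting a representative in each simultaneously shows every level set is nonempty.

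The hard part will be upgrading ordinary similarity to $\Sp(4)$-conjugacy: equality of $(p,q)$ only tells me the relevant $B$'s have the same Jordan data and hence are linearly similar, whereas I must realize the intertwiner inside $\Sp(4)$. I would do this constructively in each case by building a symplectic (rather than merely linearly independent) adapted basis --- a symplectic basis of each eigenplane when $\Delta>0$, a symplectic frame compatible with the complex structure $(B-\alpha I)/\beta$ when $\Delta<0$, and a symplectic Jordan-type basis for the single block type when $\Delta=0$ --- and then verifying directly that the change-of-basis matrix $P$ satisfies $P^{T}JP=J$. Establishing that these constructed frames are genuinely symplectic is the crux; once it is done, transitivity on each set follows, and combined with the invariance of the Pfaffian and sum function it shows the listed families are exactly the orbits.
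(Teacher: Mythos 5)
Your plan is correct in outline but takes a genuinely different route from the paper. The paper never passes to Jordan theory: it proves an ``Equivalence Lemma'' (two matrices in $S(4,\R)$ are $\rho$-equivalent once they admit symplectic bases with identical ordered tuples of $J$-values) and then runs an explicit case analysis on the entries $b,c,d,e$ of $A$, exhibiting in each case either a concrete symplectic basis whose basis-values depend only on $(p,q)$ or a concrete element of $\Sp(4)$ reducing to an earlier case; this is elementary, fully constructive, and self-contained, but special to dimension $4$. Your reduction to symplectic conjugacy of the skew-Hamiltonian matrix $B=-JA$ (the identities $-J(P^TAP)=P^{-1}(-JA)P$ and $\det(tI+B)=(t^2+qt+p)^2$ are correct) is more conceptual: it explains why $(\operatorname{Pf},\operatorname{s})$ form a complete invariant system --- they are the coefficients of the characteristic polynomial of $B$ --- and it points toward the $2n$-dimensional generalization in Section 7. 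The cost is that the crux you flag is where all the content lives: it is precisely the theorem that similar skew-Hamiltonian matrices are symplectically similar (no sign characteristics, unlike the Hamiltonian case). Your $\Delta>0$ construction goes through verbatim, but for $\Delta<0$ and for the non-semisimple $\Delta=0$ type note that the invariant planes $\operatorname{span}\{v,Kv\}$ with $K=(B-\alpha I)/\beta$, respectively $\ker N=\operatorname{im}N$, are Lagrangian rather than symplectic, so the adapted frame must be of Lagrangian-pair type $W\oplus W^{T}$; that construction succeeds, but it is the proof rather than a footnote to it.
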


We also present the shapes of two families of orbits of $S(4, \R) / \Sp(4)$.

\begin{theorem}[Shapes of Orbits in $\R^4$]

Let $p \in \R^*$, $q \in \R$, and define $\Delta = \frac{q^2}{4}-p$.

Then:

\begin{itemize}
    
    \item If $\Delta < 0$, $\mathcal{A}_{p,q}$ is homeomorphic to $(\R^2 \backslash \{0,0\}) \times \R^2$;
    
    \item If $\Delta > 0$, $\mathcal{A}_{p,q}$ is homeomorphic to $\mathbb{S}^2 \times \R^2$.

\end{itemize}

\end{theorem}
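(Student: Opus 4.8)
The plan is to reduce each orbit to a level set of the two invariants, convert that level set into a standard real quadric by an affine change of coordinates, and then read off its homeomorphism type from the signature of the associated quadratic form. First I would invoke the Orbit Space Classification theorem to replace $\mathcal{A}_{p,q}$ by the level set
\[
L_{p,q} = \{A \in S(4,\R) \mid \operatorname{Pf}(A) = p,\ \operatorname{s}(A) = q\}.
\]
That theorem presents $\mathcal{A}_{p,q}$ as exactly $L_{p,q}$, except that for $p > 0$ one deletes the points $\pm\sqrt{p}\,J$. Since $\operatorname{s}(\pm\sqrt{p}\,J) = \pm 2\sqrt{p}$, the membership $\pm\sqrt{p}\,J \in L_{p,q}$ would force $q = \pm 2\sqrt{p}$, i.e. $\Delta = \tfrac{q^2}{4} - p = 0$. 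As the statement only concerns $\Delta \neq 0$, these deleted matrices never meet $L_{p,q}$ in our range, so $\mathcal{A}_{p,q} = L_{p,q}$ in both the $\Delta < 0$ and $\Delta > 0$ cases; the constraint $\det A \neq 0$ is automatic because $\operatorname{Pf}(A) = p \neq 0$.

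Next I would parametrize a skew-symmetric matrix by its six entries $(a,b,c,d,e,f)$ as in the Pfaffian definition, so that $L_{p,q} \subset \R^6$ is cut out by $af - be + cd = p$ together with $a + f = q$. Eliminating $f = q - a$ and completing the square via $u = a - \tfrac{q}{2}$ transforms the Pfaffian equation into
\[
u^2 + be - cd = \Delta.
\]
Because this substitution is an invertible affine map of the ambient coordinates, it restricts to a homeomorphism of $L_{p,q}$ onto the quadric $\{(u,b,c,d,e) \in \R^5 \mid u^2 + be - cd = \Delta\}$.

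Finally I would classify this quadric. Diagonalizing through $be = (\tfrac{b+e}{2})^2 - (\tfrac{b-e}{2})^2$ and $-cd = (\tfrac{c-d}{2})^2 - (\tfrac{c+d}{2})^2$ exhibits the form as three positive squares ($u$, $\tfrac{b+e}{2}$, $\tfrac{c-d}{2}$) and two negative squares ($\tfrac{b-e}{2}$, $\tfrac{c+d}{2}$), so it is nondegenerate of signature $(3,2)$, and $\{Q = \Delta\}$ is a smooth regular hypersurface whenever $\Delta \neq 0$. Writing the form as $|x|^2 - |y|^2$ with $x \in \R^3$ and $y \in \R^2$, the explicit map $(\omega, y) \mapsto (\sqrt{\Delta + |y|^2}\,\omega,\, y)$ yields a homeomorphism $\mathbb{S}^2 \times \R^2 \to \{Q = \Delta\}$ when $\Delta > 0$, whereas rewriting the equation as $|y|^2 = |x|^2 - \Delta$ yields $\mathbb{S}^1 \times \R^3$ when $\Delta < 0$. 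Since $\mathbb{S}^1 \times \R \cong \R^2 \setminus \{(0,0)\}$ in polar coordinates, the latter is $(\R^2 \setminus \{(0,0)\}) \times \R^2$, which matches the two claimed shapes.

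The calculation is routine, so the two points requiring genuine care are the signature count and the passage from orbit to level set. For the signature, the cross terms $be$ and $-cd$ must each be diagonalized correctly so that the form lands on signature $(3,2)$ rather than, say, $(4,1)$, since an incorrect count would exchange the roles of $\mathbb{S}^2$ and $\mathbb{S}^1$. For the identification $\mathcal{A}_{p,q} = L_{p,q}$, the key observation is that the deleted matrices $\pm\sqrt{p}\,J$ meet the level set only on the locus $\Delta = 0$, which is precisely the borderline case omitted from the statement.
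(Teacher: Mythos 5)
Your proof is correct, but it reaches the answer by a different route than the paper. The paper projects the level set onto the coordinates $(b,c,d,e)$, observes that the fiber over each point consists of the two choices $a,f = \tfrac{q}{2}\pm\sqrt{\Delta-(be-cd)}$, and thereby realizes $\mathcal{A}_{p,q}$ as the \emph{double} of the region $D_{\Delta}=\{be-cd\le\Delta\}$ glued along its boundary; it then exhibits explicit homeomorphisms $D_{\Delta}\cong D^2\times\R^2$ (for $\Delta>0$) and $D_{\Delta}\cong\R^2\times(D^2\setminus\{0\})$ (for $\Delta<0$) and takes doubles. You instead keep the sixth coordinate, complete the square to get $u^2+be-cd=\Delta$, and classify the resulting nondegenerate quadric of signature $(3,2)$ in $\R^5$ by the standard $|x|^2-|y|^2=\Delta$ normal form. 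The two arguments are really two presentations of the same object --- the double of $\{Q\le\Delta\}$ along its boundary \emph{is} the hypersurface $\{u^2+Q=\Delta\}$, with $u$ the doubling coordinate --- but your version is cleaner: it avoids the doubling construction and the two ad hoc explicit homeomorphisms $f$ and $g$, replacing them with a signature count and the textbook product structure of a $(3,2)$ quadric. Your computations check out: the diagonalization $u^2+(\tfrac{b+e}{2})^2+(\tfrac{c-d}{2})^2-(\tfrac{b-e}{2})^2-(\tfrac{c+d}{2})^2$ does give signature $(3,2)$, and $\mathbb{S}^1\times\R^3\cong(\R^2\setminus\{0\})\times\R^2$. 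You also supply a detail the paper passes over silently, namely that the excluded matrices $\pm\sqrt{p}\,J$ lie on the level set $\{\operatorname{Pf}=p,\ \operatorname{s}=q\}$ only when $\Delta=0$, so that $\mathcal{A}_{p,q}^{+}$ genuinely equals the full level set in both cases treated by the theorem; this is a worthwhile addition.
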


\subsection{Nonlinear Case in $\R^4$ and $\R^{2n}$}

\begin{definition}[Linear symplectic form]
A \textit{linear symplectic form} $\omega$ on $\R^{2n}$ is a map $\omega: \R^{2n} \times \R^{2n} \rightarrow \R$ which satisfies

\begin{itemize}

    \item $\omega$ is bilinear;

    \item $\omega$ is skew-symmetric: if $v, w\in \R^{2n}$, then $\omega(v,w)=-\omega(w,v)$; and

    \item $\omega$ is non-degenerate: if $\omega(v,w)=0$ for all $w \in \R^{2n}$, then $v=0$.

\end{itemize}
\end{definition}

The pair $(\R^{2n}, \omega)$ refers to the vector space $\R^{2n}$ equipped with the linear symplectic form $\omega$. We call $(\R^{2n}, \omega)$ a symplectic vector space.

Any linear symplectic form $\omega$ can be represented by the action of a characteristic matrix $\Omega$: if $\{e_1, \ldots, e_{2n}\}$ is the standard basis for $\R^{2n}$, and if $\Omega$ is the $2n \times 2n$ matrix such that $\Omega_{i,j} = \omega(e_i, e_j)$ for all $1 \le i, j \le 2n$, then $\omega(v,w) = v^T \Omega w$ for all $v, w \in \R^{2n}$ \cite[Proposition 2.8.14]{FirstSteps}. We shall refer to $\Omega$ as the matrix representation of $\omega$.

\begin{definition}[Standard linear symplectic form]

Let $J$ denote the $2n\times 2n$ matrix with block matrix representation
\[
J =
\begin{bmatrix}
J_0 & O & \dots & O \\
O & J_0 & \ddots & \vdots \\
\vdots & \ddots & \ddots & O \\
O & \dots & O & J_0
\end{bmatrix},
\]

where 
$J_0 = 
\begin{bmatrix}
0 & 1 \\
-1 & 0
\end{bmatrix}$. The \textit{standard linear symplectic form} on $\R^{2n}$ is the linear symplectic form $\omegalstd$ defined as $\omegalstd(v,w) = v^T J w$ for all $v, w \in \R^{2n}$.

\end{definition}

\begin{definition}[Linear symplectomorphism]

A linear transformation $T : \R^{2n} \rightarrow \R^{2n}$ is a \textit{linear symplectomorphism of $(\R^{2n}, \omegalstd)$} if $\omegalstd(T(v), T(w)) = \omegalstd(v, w)$ for all $v, w \in \R^{2n}$. 

\end{definition}

Note that a linear transformation is a linear symplectomorphism if and only if its matrix representation $P$ satisfies $P^T J P = J$ \cite[Theorem 2.10.20]{FirstSteps}.

We now turn to the definition of nonlinear symplectic forms \cite[Definition 1.5]{DefinitionsOfNonlinearForms}.

\begin{definition}[$2$-form]

A \textit{$2$-form on $\R^{2n}$} is an expression of the form 
\[\sum_{1\le i < j\le 2n} \omega_{i,j}\mathrm{d} x_i \wedge \mathrm{d} x_j,\]

where $\omega_{i,j} : \R^{2n} \rightarrow \R$ for all $1\le i < j \le 2n$.

\end{definition}

A $2$-form $\omega$ on $\R^{2n}$ is an alternating bilinear form $\omega(x)$ at every point $x \in \R^{2n}$. Furthermore, the alternating bilinear form $\omega(x)$ can be represented by the action of a characteristic matrix $\Omega(x)$: if $\Omega(x)$ is the skew-symmetric matrix such that $\Omega(x)_{i,j} = \omega_{i,j}$ for all $1\le i < j \le 2n$, then $\omega(x)(v,w) = v^T \Omega(x) w$ for all $v, w \in \R^{2n}$. We shall refer to $\Omega(x)$ as the matrix representation of $\omega(x)$.

\begin{definition}[Symplectic form] A \textit{symplectic form on $\R^{2n}$} is a $2$-form
\[\omega = \sum_{1\le i < j\le 2n} \omega_{i,j}\mathrm{d} x_i \wedge \mathrm{d} x_j\]

satisfying the following conditions:

\begin{itemize}
    
    \item $\omega$ is smooth: $\omega_{i,j}$ is smooth for all $1\le i < j\le 2n$;
    
    \item $\omega$ is closed: for all $1\le i < j < k\le 2n$,
    \[\frac{\partial{\omega_{j,k}}}{\partial{x_i}} - \frac{\partial{\omega_{i,k}}}{\partial{x_j}} + \frac{\partial{\omega_{i,j}}}{\partial{x_k}} = 0;\]
    and
    
    \item $\omega$ is nondegenerate: $\det(\Omega(x)) \neq 0$ for all $x \in \R^{2n}$, where $\Omega$ is the matrix representation of $\omega$.
    
\end{itemize}

\end{definition}

As above, the symplectic vector space $(\R^{2n}, \omega)$ represents $\R^{2n}$ equipped with the symplectic form $\omega$. 

Note that if $\omega$ is a symplectic form, then at each point $x \in \R^{2n}$, the alternating bilinear form $\omega(x)$ is a linear symplectic form.

\begin{definition}[Standard symplectic form]

The \textit{standard symplectic form on $\R^{2n}$} is the symplectic form

\[\omegastd = \sum_{i=1}^{n} \mathrm{d} x_{2i-1} \wedge \mathrm{d} x_{2i}.\]

\end{definition}

Symplectomorphisms may be defined in terms of the pullback \cite[Definition 7.3.1]{FirstSteps}.

\begin{definition}[Pullback of a $2$-form]

Let $\varphi : \R^{2n} \rightarrow \R^{2n}$ be a smooth map, and let $\omega$ be a $2$-form. The \textit{pullback of $\omega$ by $\varphi$} is the $2$-form defined as
\[(\varphi^* \omega)(x)(v, w) =
\omega(\varphi(x))(d\varphi_{x}(v), d\varphi_{x}(w))\]

for all $v, w \in \R^{2n}$. (Here, $d\varphi_{x}$ denotes the linearization of $\varphi$ at $x$.)

\end{definition}

\begin{definition}[Symplectomorphism]
A \textit{symplectomorphism $\varphi$ of $(\R^{2n}, \omegastd)$} is a diffeomorphism of $\R^{2n}$ satisfying $\varphi^* \omegastd = \omegastd$.

\end{definition}

We also introduce the notion of two symplectic forms being $\omegastd$-symplectomorphic.

\begin{definition}
The symplectic forms $\omega_1$ and $\omega_2$ on $\R^{2n}$ are \textit{$\omegastd$-symplectomorphic} if there exists $\varphi \in \Symp(\R^{2n}, \omegastd)$ such that $\varphi^* \omega_1 = \omega_2$.

\end{definition}

With the above definitions in place, we may present our results for the nonlinear case in $\R^{2n}$. Let $\R^{2n}$ have coordinates $(x_1, y_1, x_2, y_2,...,x_{2n},y_{2n})$. First, we note that the invariants of Theorem 2.1 provide a necessary condition for two nonlinear symplectic forms on $\R^{2n}$ to be $\omegastd$-symplectomorphic.

\begin{theorem}

Let $\omega_1$ and $\omega_2$ be symplectic forms on $\R^{2n}$. Then $\omega_1$ and $\omega_2$ are $\omegastd$-symplectomorphic only if there exists $\varphi \in \Symp(\R^{2n}, \omegastd)$ such that for every point $x \in \R^{2n}$, the Pfaffians and sum functions of $\Omega_1(\varphi(x))$ and $\Omega_2(x)$ match. (Here, $\Omega_1(\varphi(x))$ and $\Omega_2(x)$ are the matrix representations of $\omega_1(\varphi(x))$ and $\omega_2(x)$, respectively.)

\end{theorem}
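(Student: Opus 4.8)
The plan is to reduce this nonlinear statement to the linear invariance result by linearizing the symplectomorphism at each point. Suppose $\omega_1$ and $\omega_2$ are $\omegastd$-symplectomorphic, so that there exists $\varphi \in \Symp(\R^{2n}, \omegastd)$ with $\varphi^* \omega_1 = \omega_2$. I would argue that this very $\varphi$ witnesses the conclusion, so the entire proof becomes a pointwise computation with $\varphi$ held fixed; the "only if" direction is then immediate once the Pfaffians and sum functions are shown to agree base-point by base-point.

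First I would translate the pullback identity into matrices. Writing $P(x) = d\varphi_x$ for the Jacobian of $\varphi$ at $x$ and evaluating the definition of the pullback on arbitrary $v, w \in \R^{2n}$, the equation $\varphi^*\omega_1 = \omega_2$ reads
\[
v^T \Omega_2(x)\, w = \omega_1(\varphi(x))\bigl(P(x)v,\, P(x)w\bigr) = v^T P(x)^T\, \Omega_1(\varphi(x))\, P(x)\, w .
\]
Since this holds for all $v$ and $w$, I obtain the pointwise congruence $\Omega_2(x) = P(x)^T\, \Omega_1(\varphi(x))\, P(x)$ for every $x \in \R^{2n}$.

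Next I would verify that $P(x) \in \Sp(2n)$ at every point, which is exactly where the hypothesis $\varphi \in \Symp(\R^{2n}, \omegastd)$ enters. Unwinding $\varphi^* \omegastd = \omegastd$ in the same fashion, and using that $\omegastd$ has the constant matrix representation $J$ at every base point, yields $v^T J w = v^T P(x)^T J P(x)\, w$ for all $v, w$, hence $P(x)^T J P(x) = J$. Combining this with the previous congruence, at each $x$ the matrices $\Omega_2(x)$ and $\Omega_1(\varphi(x))$ lie in a single $\Sp(2n)$-orbit: indeed $\Omega_2(x) = \rho\bigl(P(x),\, \Omega_1(\varphi(x))\bigr)$. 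Because $\omega_1$ and $\omega_2$ are genuine symplectic forms, they are nondegenerate everywhere, so $\Omega_1(\varphi(x))$ and $\Omega_2(x)$ belong to $S(2n, \R)$ and $\rho$ genuinely applies.

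Finally I would invoke the invariance of the Pfaffian and the sum function under $\rho$, established in Theorem 2.1 (and its higher-dimensional extension), to conclude that $\operatorname{Pf}(\Omega_1(\varphi(x))) = \operatorname{Pf}(\Omega_2(x))$ and $\operatorname{s}(\Omega_1(\varphi(x))) = \operatorname{s}(\Omega_2(x))$ for every $x$, which is the asserted conclusion with the $\varphi$ fixed at the outset. Since this is essentially a linearization argument, no single step is a deep obstacle; the parts requiring the most care are the bookkeeping of base points — the congruence relates $\Omega_2$ evaluated at $x$ to $\Omega_1$ evaluated at the image point $\varphi(x)$, not at $x$ — and the verification that each pointwise Jacobian lands in $\Sp(2n)$ rather than merely being invertible.
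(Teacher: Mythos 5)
Your proof is correct and follows essentially the same route as the paper's: translate $\varphi^*\omega_1=\omega_2$ into the pointwise congruence $\Omega_2(x)=P(x)^T\,\Omega_1(\varphi(x))\,P(x)$ with $P(x)=d\varphi_x$, observe that the two matrices lie in the same orbit of $\rho$, and invoke the linear invariance theorem. If anything, you are more careful than the paper, which asserts without comment that $d\varphi_x\in\Sp(2n)$, whereas you explicitly derive $P(x)^TJP(x)=J$ from $\varphi^*\omegastd=\omegastd$.
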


\begin{theorem}
     Consider symplectic forms 
     \begin{equation*}
         \omega_1=\sum_{i=1}^n f_i(x_i,y_i)dx_i\wedge dy_i
     \end{equation*}
and 
\begin{equation*}
         \omega_2=\sum_{i=1}^n g_i(x_i,y_i)dx_i\wedge dy_i
\end{equation*}
If $\phi \in \Symp(\mathbb{R}^{2n}, \omega_{std})$, such that $\phi^*\omega_1=\omega_2$, $\phi=(\phi_1,\phi_2,...,\phi_n)$
Then the set $\{f_i(\phi^{2i-1},\phi^{2i})\}$ and $\{g_j(\phi^{2j-1},\phi^{2j})\}$ for some $i,j$ such that $1<i,j<n$ at every point in $\mathbb{R}^{2n}$. 
\end{theorem}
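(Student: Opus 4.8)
The plan is to reduce this nonlinear statement to the pointwise linear invariant theory developed for $\R^{2n}$. First I would record the matrix representations of the two forms. Because $\omega_1$ and $\omega_2$ are block-diagonal in the given coordinates, their characteristic matrices are $\Omega_1(x) = \operatorname{diag}(f_1 J_0, \dots, f_n J_0)$ and $\Omega_2(x) = \operatorname{diag}(g_1 J_0, \dots, g_n J_0)$, where $f_i = f_i(x_i, y_i)$ and $g_i = g_i(x_i, y_i)$.

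Next I would convert the pullback hypothesis into the linear action $\rho$ at each point. Since $\phi \in \Symp(\R^{2n}, \omegastd)$ and the matrix of $\omegastd$ is the constant $J$, differentiating $\phi^*\omegastd = \omegastd$ forces $(d\phi_x)^T J (d\phi_x) = J$, i.e.\ $d\phi_x \in \Sp(2n)$ for every $x$. Unwinding the definition of the pullback of a $2$-form, the equation $\phi^* \omega_1 = \omega_2$ becomes, at each point, $\Omega_2(x) = (d\phi_x)^T \Omega_1(\phi(x))\, d\phi_x = \rho(d\phi_x, \Omega_1(\phi(x)))$. Hence $\Omega_2(x)$ and $\Omega_1(\phi(x))$ lie in a common $\Sp(2n)$-orbit for every $x$.

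Now I would invoke the $2n$-dimensional invariants $s_k(A) = \sigma_k(A)$, the coefficients of $\operatorname{Pf}(tJ + A)$, which are constant on $\Sp(2n)$-orbits. This yields $s_k(\Omega_2(x)) = s_k(\Omega_1(\phi(x)))$ for all $k$ and all $x$. For block-diagonal matrices these invariants are immediate: since $tJ + \Omega_2(x) = \operatorname{diag}((t+g_1)J_0, \dots, (t+g_n)J_0)$ and the Pfaffian of a block-diagonal skew-symmetric matrix is the product of the block Pfaffians, one gets $\operatorname{Pf}(tJ + \Omega_2(x)) = \prod_{j=1}^n (t + g_j(x_j, y_j))$, and likewise $\operatorname{Pf}(tJ + \Omega_1(\phi(x))) = \prod_{i=1}^n (t + f_i(\phi^{2i-1}, \phi^{2i}))$. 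Matching all the $s_k$ is precisely equating these two monic degree-$n$ polynomials in $t$.

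Finally, equating $\prod_{j}(t + g_j(x_j, y_j)) = \prod_{i}(t + f_i(\phi^{2i-1}, \phi^{2i}))$ and appealing to unique factorization of polynomials forces their multisets of roots to agree, giving $\{g_j(x_j, y_j)\}_{j=1}^n = \{f_i(\phi^{2i-1}, \phi^{2i})\}_{i=1}^n$ as multisets at every point of $\R^{2n}$, which is the assertion. I expect the main obstacle to be bookkeeping rather than conceptual depth: one must justify the block-diagonal Pfaffian factorization cleanly and observe that the \emph{entire} family $\{s_k\}$, not merely the Pfaffian and the sum function, is required, since only all $n$ coefficients together pin down the multiset of the coefficient functions.
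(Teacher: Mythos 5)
Your proposal is correct and follows essentially the same route as the paper's own argument: pass to the pointwise linear action via $d\phi_x \in \Sp(2n)$, use the invariants $s_k$ encoded as coefficients of $\operatorname{Pf}(tJ+A)$, factor this polynomial for the block-diagonal matrices as $\prod_i\bigl(t+f_i(\phi^{2i-1},\phi^{2i})\bigr)=\prod_j\bigl(t+g_j(x_j,y_j)\bigr)$, and match the multisets of roots. Your write-up is in fact more careful than the paper's, which asserts the key polynomial identity with less justification of the pullback-to-$\rho$ reduction.
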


In particular, for the class of symplectic forms
\[\omega_1=\sum_{i=1}^n f_i(x_i,y_i)dx_i\wedge dx_i\]

where $f_i,1\le i \le n$ are nowhere vanishing, we derive multiple global invariants.

\begin{theorem}[Global Invariants]

Consider the symplectic form
\[\omega_1=\sum_{i=1}^n f_i(x_i,y_i)dx_i\wedge dx_i\]

where $f_i,1\le i \le n$ are nowhere vanishing. For $x \in \R^{2n}$, define 
\[m_{\omega}(x) = \min\{f_i(x_i,y_i), 1\le i \le n\}\]
and 
\[M_{\omega}(x) = \max\{f_i(x_i,y_i), 1\le i \le n\}.\]

Then the following four quantities are global invariants of $\omega$:

\begin{itemize}
    \item $\inf \{m_{\omega}(x) \mid x \in \mathbb{R}^{2n}\}$,
    
    \item $\sup\{m_{\omega}(x) \mid x \in \mathbb{R}^{2n}\}$,
    
    \item $\inf\{M_{\omega}(x) \mid x \in \mathbb{R}^{2n}\}$,
    
    \item $\sup\{M_{\omega}(x) \mid x \in \mathbb{R}^{2n}\}$.
\end{itemize}

In other words, if $\omega_1$ is $\omegastd$-symplectomorphic to the symplectic form
\[\omega_2 =\sum_{i=1}^n g_i(x_i,y_i)dx_i\wedge dy_i\]

where $g_i,1\le i \le n$ are nowhere vanishing, then the four global invariants of $\omega_1$ and $\omega_2$ must match.

\end{theorem}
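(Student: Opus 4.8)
The plan is to reduce this statement to the pointwise matching of the unordered lists $\{f_i\}$ and $\{g_i\}$ and then transfer that matching to global infima and suprema using the fact that a symplectomorphism of $(\R^{2n},\omegastd)$ is a bijection of $\R^{2n}$. First I would record the relevant matrix structure: for a form of the separated type $\omega = \sum_{i=1}^n f_i(x_i,y_i)\,dx_i \wedge dy_i$, the matrix representation $\Omega(x)$ is block diagonal with $i$-th block $f_i(x_i,y_i)J_0$. Hence $\operatorname{Pf}(tJ+\Omega(x)) = \prod_{i=1}^n\bigl(t+f_i(x_i,y_i)\bigr)$, so the $n$ Pfaffian invariants $s_k$ of $\Omega(x)$ are exactly the elementary symmetric polynomials in the values $f_1(x_1,y_1),\dots,f_n(x_n,y_n)$.

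Next I would invoke the invariance of these quantities. Because the $s_k$ are invariant under $\Sp(2n)$-congruence (the even-dimensional extension of the invariant theorem) and because $\varphi^*\omega_1 = \omega_2$ forces $\Omega_2(x) = (d\varphi_x)^T\,\Omega_1(\varphi(x))\,(d\varphi_x)$ with $d\varphi_x \in \Sp(2n)$, the two monic polynomials $\operatorname{Pf}(tJ+\Omega_1(\varphi(x)))$ and $\operatorname{Pf}(tJ+\Omega_2(x))$ coincide for every $x$. Matching a real-rooted polynomial identifies its roots with multiplicity, so the multiset $\{f_i(\varphi(x))\}_{i=1}^n$ equals the multiset $\{g_i(x_i,y_i)\}_{i=1}^n$ at every point. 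This is precisely the pointwise content supplied by the preceding theorem on separated forms, and I would either cite it directly or re-derive it in this one line.

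I would then pass from the multiset equality to the extremal functions. Since $\min$ and $\max$ of a finite list depend only on the underlying multiset, the pointwise equality yields $m_{\omega_1}(\varphi(x)) = m_{\omega_2}(x)$ and $M_{\omega_1}(\varphi(x)) = M_{\omega_2}(x)$ for all $x \in \R^{2n}$. Finally, taking global infima and suprema and using that $\varphi$ is a diffeomorphism, hence a bijection, I would reindex by $y = \varphi(x)$, which ranges over all of $\R^{2n}$: thus $\inf_x m_{\omega_2}(x) = \inf_x m_{\omega_1}(\varphi(x)) = \inf_y m_{\omega_1}(y)$, and identically for $\sup m$, $\inf M$, and $\sup M$. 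This gives agreement of all four quantities for $\omega_1$ and $\omega_2$.

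The hard part is entirely the pointwise multiset matching, and it hides two distinct ingredients that must both be in place: that the $s_k$ are genuine $\Sp(2n)$-congruence invariants in every even dimension, and that matching \emph{all} $n$ of them recovers the unordered list of values $f_i$ — the Pfaffian and sum function alone are insufficient once $n > 2$, since two symmetric functions cannot pin down $n$ real numbers. Once the full symmetric-polynomial data is matched pointwise, the reductions to $\min$/$\max$ and then to $\inf$/$\sup$ are routine, with the bijectivity of $\varphi$ as the only additional input.
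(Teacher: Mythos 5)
Your proposal is correct and follows essentially the same route as the paper: the paper's Lemma 6.1 (for $n=2$, via Pfaffian and sum function) and its Section 8 generalization (via the full polynomial $\operatorname{Pf}(tJ+\Omega(x))=\prod_i\bigl(t+f_i\bigr)$ for block-diagonal matrices) establish exactly the pointwise multiset equality $\{f_i(\varphi(x))\}=\{g_i(x)\}$, after which the paper passes to $m_\omega$, $M_\omega$ and then to the four infima/suprema using the bijectivity of $\varphi$, just as you do. Your remark that the Pfaffian and sum function alone do not suffice for $n>2$ and that all $n$ invariants $s_k$ are needed is a correct and worthwhile clarification of a point the paper leaves implicit.
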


Finally, we consider the symplectic forms 
\[\omega_t = f_1(t, x_1, y_1) \mathrm{d} x_1 \wedge \mathrm{d} y_1 + f_2(t, x_2, y_2) \mathrm{d} x_2 \wedge \mathrm{d} y_2, t \in [0, 1],\]

where the functions $f_i$ are bounded away from zero with bounded time derivative. The Curry-Pelayo-Tang Stability Theorem implies that there exists a smooth path of $\varphi_t \in \Diff(\R^4)$, $t \in [0, 1]$, such that $\varphi_t^* \omega_t = \omega_0$ \cite[Example 4.1]{Curry2018}.

We present an infinite collection of smooth families of symplectic forms which are diffeomorphic but \textit{not} $\omegastd$-symplectomorphic. For instance, the family of symplectic forms 
\[\omega_t = (x_1^2+y_1^2+t+1)\mathrm{d} x_1 \wedge \mathrm{d} y_1 + (x_2^2+y_2^2+t+1)\mathrm{d} x_2 \wedge \mathrm{d} y_2, t \in [0,1]\]

are diffeomorphic but not $\omegastd$-symplectomorphic.

\subsection{Higher Dimensional Linear Case}

Lastly, we study a generalization of our original linear classification problem to $\R^{2n}$.

Let
\[ S(2n,\R)=\{ A\in M(2n,\R) \ssep A^T = -A, \det{A} \neq 0\} \]
denote the set of all linear symplectic forms on $\R^{2n}$, and let
\[ \Sp(2n) = \{ P\in M(2n,\R) \ssep P^T J P = J\} \]
denote the group of linear symplectomorphisms on $\R^{2n}$.

As above, we consider the group action $\rho : \Sp(2n) \times S(2n, \R) \rightarrow S(2n, \R)$ given by $\rho(P,A) = P^T A P$ for $P\in \Sp(2n)$ and for $A\in S(2n,\R)$.

We present a general Equivalence Lemma for $\R^{2n}$. We also demonstrate that the generalized Pfaffian and generalized sum function remain invariants of the group action $\rho$ \cite[Definition 2.1]{PfaffianProof}.

\begin{definition}[Pfaffian of a $2n \times 2n$ matrix]

The \textit{Pfaffian} of a $2n \times 2n$ matrix $A$ is the quantity
\[\operatorname{Pf}(A) = \sum_{\sigma} \text{sgn}(\sigma) \prod_{i=1}^{n} A_{\sigma(2i-1), \sigma(2i)},\]

where $\text{sgn}(\sigma)$ denotes the signature of $\sigma$ and the summation is taken over all permutations $\sigma$ of $\{1, 2, \ldots, 2n\}$. 
(Here, $A_{i, j}$ denotes the $(i, j)$-entry of $A$.)

\end{definition}

Note that the identity $\det(A) = \operatorname{Pf}(A)^2$ holds in general \cite[Proposition 2.2]{PfaffianProof}.

\begin{definition}[Sum function of $2n \times 2n$ matrix]

The \textit{sum function} of a matrix $A \in S(2n,\R)$ is the quantity \[\operatorname{s}(A) = \sum_{i=1}^{n} A_{2i-1, 2i}.\]

\end{definition}

\begin{theorem}[Invariants in $\R^{2n}$]

The Pfaffian of a matrix in $S(2n,\R)$ is invariant under the group action $\rho$. In addition, the sum function of a matrix in $S(2n,\R)$ is invariant under the group action $\rho$.

\end{theorem}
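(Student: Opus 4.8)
\section*{Proof proposal}

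The plan is to derive both invariants from a single standard property of the Pfaffian: the multiplicative transformation law $\operatorname{Pf}(B^T M B) = \det(B)\operatorname{Pf}(M)$, valid for every $B \in M(2n,\R)$ and every skew-symmetric $M$ (available from the cited reference). The first step is to pin down $\det(P)$ for $P \in \Sp(2n)$: applying the transformation law to $M = J$ gives $\operatorname{Pf}(P^T J P) = \det(P)\operatorname{Pf}(J)$, and since $P^T J P = J$ by the definition of $\Sp(2n)$ while $\operatorname{Pf}(J) = 1 \neq 0$, we conclude $\det(P) = 1$. Pfaffian invariance is then immediate, since $\operatorname{Pf}(\rho(P,A)) = \operatorname{Pf}(P^T A P) = \det(P)\operatorname{Pf}(A) = \operatorname{Pf}(A)$.

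For the sum function I would first record the algebraic identity $\operatorname{s}(A) = -\tfrac{1}{2}\operatorname{tr}(JA)$, obtained by direct expansion: the only nonzero entries of $J$ lie in positions $(2k-1,2k)$ and $(2k,2k-1)$, so $\operatorname{tr}(JA) = \sum_{k=1}^n (A_{2k,2k-1} - A_{2k-1,2k}) = -2\sum_{k=1}^n A_{2k-1,2k}$ after invoking skew-symmetry. Invariance then rests on the equivalent symplectic condition $P J P^T = J$, which I would deduce from $P^T J P = J$ by inverting both sides and using $J^{-1} = -J$ (a consequence of $J^2 = -I$). With this in hand, the cyclic invariance of the trace yields $\operatorname{s}(P^T A P) = -\tfrac12 \operatorname{tr}(J P^T A P) = -\tfrac12\operatorname{tr}(P J P^T A) = -\tfrac12\operatorname{tr}(JA) = \operatorname{s}(A)$.

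Alternatively --- and this is the route I find cleanest, since it unifies both statements and foreshadows the $s_k$ invariants --- I would work with the polynomial $\operatorname{Pf}(tJ + A) \in \R[t]$. Because $P^T(tJ+A)P = t(P^TJP) + P^TAP = tJ + P^TAP$, the transformation law together with $\det(P)=1$ gives the polynomial identity $\operatorname{Pf}(tJ + P^TAP) = \operatorname{Pf}(tJ + A)$, so every coefficient in $t$ is an invariant. The constant term is $\operatorname{Pf}(A)$, recovering the first claim. For the second claim I would identify the coefficient of $t^{n-1}$ with $\operatorname{s}(A)$: in the matching expansion of the Pfaffian, a term of degree $n-1$ in $t$ must use $n-1$ of the standard pairs $(2k-1,2k)$ (the only positions where $J$ is nonzero), leaving a single pair $(2j-1,2j)$ whose edge contributes $A_{2j-1,2j}$ with the same sign as in the all-$J$ matching (which computes $\operatorname{Pf}(J)=1$); summing over $j$ gives exactly $\sum_j A_{2j-1,2j} = \operatorname{s}(A)$.

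The routine linear algebra above is straightforward; the genuine content is the multiplicative transformation law $\operatorname{Pf}(B^TMB) = \det(B)\operatorname{Pf}(M)$, which I take as given from the cited reference. Were it needed in self-contained form, the main obstacle would be proving it --- most cleanly via a polynomial-identity (Zariski-density) argument reducing to generic $B$, or via the exterior-algebra description $\operatorname{Pf}(M)\, e_1 \wedge \cdots \wedge e_{2n} = \tfrac{1}{n!}\,\bigl(\sum_{i<j} M_{ij}\, e_i \wedge e_j\bigr)^{\wedge n}$, under which the substitution $M \mapsto B^T M B$ manifests the factor $\det(B)$. The sign-bookkeeping in the $t^{n-1}$ coefficient identification is the only other point that requires care.
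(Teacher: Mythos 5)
Your proposal is correct. The Pfaffian half coincides with the paper's argument (Lemmas 3.1--3.3: the transformation law $\operatorname{Pf}(P^TAP)=\det(P)\operatorname{Pf}(A)$ plus $\det(P)=1$, which you additionally derive rather than cite). For the sum function, however, you take a genuinely different route: the paper proves that the rows of $P$ form a symplectic basis for $(\R^{2n},\omega_{\mathrm{lstd}})$ (Lemmas 3.4--3.5) and then expands $\sum_k \omega(c_{2k-1},c_{2k})$ by bilinearity into $\sum_{i<j}\omega_{\mathrm{lstd}}(r_i,r_j)\,\omega(e_i,e_j)$, whereas you use the identity $\operatorname{s}(A)=-\tfrac12\operatorname{tr}(JA)$ together with $PJP^T=J$ and cyclicity of the trace. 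Your computation of $\operatorname{tr}(JA)$ and the derivation of $PJP^T=J$ from $P^TJP=J$ via $J^{-1}=-J$ both check out, and the trace argument is noticeably shorter than the paper's basis expansion; it is also the identity the authors themselves advertise in the abstract without exploiting it in the proof. Your third, unified route via the coefficients of $\operatorname{Pf}(tJ+A)$ is essentially what the paper does later (Section 7) to establish the general invariants $s_k$, of which $\operatorname{Pf}=s_0$ and $\operatorname{s}=s_{n-1}$ are the extreme cases, so that variant anticipates Theorem 2.8 rather than duplicating the Section 3 proof. The only point needing care, as you note, is the sign in identifying the $t^{n-1}$ coefficient with $\operatorname{s}(A)$; your matching argument (the $n-1$ $J$-edges force the remaining $A$-edge onto the pair $(2j-1,2j)$ with the sign of the all-$J$ matching) handles it correctly.
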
 

\begin{theorem}[Invariants in $\R^{2n}$]
Let $A \in S(2n,\mathbb{R})$, and let $0\le k\le n-1$. For a subset $S \subset \{1,2,...,n\}$ with $|S| = k$, define the $(2n-2k) \times (2n-2k)$ matrix $A_{S}$ as the submatrix of $A$ formed by removing rows and columns $2i-1$ and rows and columns $2i$ for all $i \in S$.Let $s_k(A)$ denote the sum
\[
\sum_{\substack{S \subseteq \{1, 2, \ldots, n\} \\ |S|=k}} \operatorname{Pf}(A_{S}).
\]
Then for any $P \in \Sp(2n)$, $s_k(A) = s_k(P^T A P)$. In other words, $s_k$ is invariant under the group action $\rho$.
\end{theorem}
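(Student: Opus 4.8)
The plan is to reduce everything to one clean identity: that $s_k(A)$ equals the coefficient of $t^k$ in the polynomial $\operatorname{Pf}(tJ + A)$, which I will denote $\sigma_k(A)$. Granting this identification, invariance under $\rho$ is immediate. For $P \in \Sp(2n)$ we have $P^T J P = J$, so
\[
tJ + P^T A P = P^T(tJ)P + P^T A P = P^T (tJ + A) P,
\]
and the standard transformation law $\operatorname{Pf}(P^T M P) = \det(P)\operatorname{Pf}(M)$, valid for any skew-symmetric $M$, gives $\operatorname{Pf}(tJ + P^T A P) = \det(P)\operatorname{Pf}(tJ+A)$. Applying the same law to $M = J$ shows $\operatorname{Pf}(J) = \det(P)\operatorname{Pf}(J)$ with $\operatorname{Pf}(J) = 1 \neq 0$, hence $\det(P) = 1$. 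Therefore $\operatorname{Pf}(tJ + P^T A P) = \operatorname{Pf}(tJ + A)$ as polynomials in $t$, and equating coefficients of $t^k$ yields $s_k(P^T A P) = s_k(A)$.

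The substance is thus the identity $s_k(A) = \sigma_k(A)$, which I would establish using the exterior-algebra description of the Pfaffian. Associating to any skew-symmetric $M$ the two-form $\omega_M = \sum_{i<j} M_{ij}\, e_i \wedge e_j$, one has $\tfrac{1}{n!}\,\omega_M^{\wedge n} = \operatorname{Pf}(M)\, \mathbf{e}$, where $\mathbf{e} := e_1 \wedge \cdots \wedge e_{2n}$. Because two-forms commute in the exterior algebra, I can expand
\[
\frac{1}{n!}(t\,\omega_J + \omega_A)^{\wedge n} = \sum_{k=0}^{n} \frac{t^k}{k!\,(n-k)!}\, \omega_J^{\wedge k} \wedge \omega_A^{\wedge(n-k)},
\]
so that $\sigma_k(A)\,\mathbf{e}$ is the $k$-th term on the right. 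I would then exploit the block structure $\omega_J = \sum_{i=1}^n e_{2i-1}\wedge e_{2i}$: since the summands are disjoint and square to zero, $\omega_J^{\wedge k} = k!\sum_{|S|=k} \omega_S$, where $\omega_S := \bigwedge_{i\in S}(e_{2i-1}\wedge e_{2i})$. Wedging $\omega_S$ against $\omega_A^{\wedge(n-k)}$ annihilates every term of $\omega_A$ that touches a block in $S$ (a repeated basis vector gives zero), so only the restricted part survives: $\omega_S \wedge \omega_A^{\wedge(n-k)} = \omega_S \wedge \omega_{A_S}^{\wedge(n-k)} = (n-k)!\,\operatorname{Pf}(A_S)\,\big(\omega_S \wedge e_R\big)$, where $e_R$ is the increasing wedge of the remaining basis vectors. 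Collecting constants, the coefficient of $t^k$ becomes $\sum_{|S|=k}\operatorname{Pf}(A_S)\cdot \varepsilon_S$, with $\varepsilon_S$ the sign of the shuffle reordering $\omega_S \wedge e_R$ into $\mathbf{e}$.

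The one delicate point — and the main obstacle — is showing $\varepsilon_S = +1$ for every $S$. This is exactly where the hypothesis that $J$ is block-diagonal in $2\times 2$ blocks is essential: the indices $\{2i-1,2i\}$ occur in consecutive pairs, so the reordering permutes entire two-element blocks rather than individual vectors. Each block is an even-degree form, and transposing two of them contributes the sign $(-1)^{2\cdot 2} = +1$; hence the whole reshuffle is sign-neutral and $\varepsilon_S = +1$. This gives $\sigma_k(A) = \sum_{|S|=k}\operatorname{Pf}(A_S) = s_k(A)$, completing the argument.

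As a consistency check I would verify the low-dimensional cases against the earlier definitions. For $n=2$ and the range $0 \le k \le n-1$, this recovers $s_0 = \operatorname{Pf}(A)$ and $s_1 = \sum_{|S|=1}\operatorname{Pf}(A_S) = a+f = \operatorname{s}(A)$, so Theorem 2.5 is the special case $k \in \{0,1\}$ of the present statement. The excluded top coefficient is trivial: $\sigma_n(A) = \operatorname{Pf}(J) = 1$ is the leading term and is invariant for free, which is precisely why the range stops at $k = n-1$.
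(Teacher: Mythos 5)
Your proposal is correct and follows essentially the same route as the paper: identify $s_k(A)$ with the coefficient $\sigma_k(A)$ of $t^k$ in $\operatorname{Pf}(tJ+A)$ via the exterior-algebra identity $\tfrac{1}{n!}\omega^{\wedge n}=\operatorname{Pf}(A)\,e_1\wedge\cdots\wedge e_{2n}$ and the binomial expansion of $(t\omega_0+\omega)^{\wedge n}$, then deduce invariance from $\operatorname{Pf}(P^T M P)=\det(P)\operatorname{Pf}(M)$ and $\det(P)=1$. Your treatment of the shuffle sign $\varepsilon_S=+1$ is in fact more careful than the paper's, which passes over this point silently.
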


\section{Proof of pfaffian and sum function in $\R^4$}

In this section, we prove Theorem 2.1 and 2.7 together. We begin with the invariance of the Pfaffian under $\rho$. We will need two well-known lemmas \cite[Proposition 2.3]{PfaffianProof} \cite[Corollary 7.3.5]{FirstSteps}.

\begin{lemma}

Let $A \in S(2n,\R)$ and let $P \in M(2n,\R)$. Then $\operatorname{Pf}(P^T A P) = \operatorname{Pf}(A) \det(P)$.

\end{lemma}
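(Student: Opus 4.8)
The plan is to reduce the identity to its square, which follows immediately from the stated fact $\det(A) = \operatorname{Pf}(A)^2$ together with the multiplicativity of the determinant, and then to resolve the resulting sign ambiguity by a polynomial-identity argument. First I would note that whenever $A$ is skew-symmetric, so is $P^T A P$, since $(P^T A P)^T = P^T A^T P = -P^T A P$; hence $\operatorname{Pf}(P^T A P)$ is defined and obeys $\det(P^T A P) = \operatorname{Pf}(P^T A P)^2$. Combining this with $\det(P^T A P) = \det(P)^2 \det(A) = \det(P)^2 \operatorname{Pf}(A)^2$ gives
\[ \operatorname{Pf}(P^T A P)^2 = \bigl(\det(P)\operatorname{Pf}(A)\bigr)^2. \]
This already yields the claim up to a sign that could, a priori, depend on $P$ and on $A$.

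The main obstacle is precisely to show that this sign is globally $+1$ rather than varying with $P$ (note that a naive connectedness argument over invertible matrices is awkward, since $\det P > 0$ and $\det P < 0$ give two components). I would instead treat both sides as polynomials: regard the $(2n)^2$ entries of $P$ and the $\binom{2n}{2}$ independent above-diagonal entries of the skew-symmetric $A$ as indeterminates over $\R$. Then $\operatorname{Pf}(P^T A P)$, $\det(P)$, and $\operatorname{Pf}(A)$ are all elements of the polynomial ring in these indeterminates, and the displayed equation is an identity there. Writing $u = \operatorname{Pf}(P^T A P)$ and $v = \det(P)\operatorname{Pf}(A)$, the relation $u^2 = v^2$ factors as $(u-v)(u+v) = 0$. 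Since the polynomial ring is an integral domain, one of the factors $u-v$, $u+v$ must be the zero polynomial.

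To decide which, I would evaluate at $P = I$, where $u = \operatorname{Pf}(A)$ and $v = \operatorname{Pf}(A)$, so $u+v = 2\operatorname{Pf}(A)$. This is not the zero polynomial (for instance it is nonzero at $A = J$), so $u+v \not\equiv 0$, forcing $u - v \equiv 0$. Thus $\operatorname{Pf}(P^T A P) = \det(P)\operatorname{Pf}(A)$ as a polynomial identity, hence for every real $P \in M(2n,\R)$ and $A \in S(2n,\R)$. I expect the sign-resolution step to be the only genuinely delicate point; the algebraic reductions are routine once the skew-symmetry of $P^T A P$ is recorded.

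As a remark, the same statement has a clean coordinate-free explanation: if $\omega_A(v,w) = v^T A w$ is the alternating form of $A$, then its $n$-th exterior power satisfies $\tfrac{1}{n!}\omega_A^{\wedge n} = \operatorname{Pf}(A)\,\mathrm{vol}$, the congruence $P^T A P$ corresponds to the pullback $P^*\omega_A$, and a top-degree form pulls back under a linear map by the factor $\det(P)$; comparing $\tfrac{1}{n!}(P^*\omega_A)^{\wedge n} = \operatorname{Pf}(P^T A P)\,\mathrm{vol}$ with $P^*\bigl(\tfrac{1}{n!}\omega_A^{\wedge n}\bigr) = \operatorname{Pf}(A)\det(P)\,\mathrm{vol}$ recovers the identity without any sign ambiguity. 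Since the lemma is only needed here as a computational tool, I would present the polynomial argument in the main text and relegate this conceptual proof to a remark.
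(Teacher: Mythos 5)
Your argument is correct. The paper itself does not prove this lemma --- it is stated as a well-known fact with a citation to an external reference --- so there is no in-text proof to compare against, but your write-up would serve as a legitimate self-contained justification. The two steps that matter both check out: squaring reduces the claim to $\det(P^TAP)=\det(P)^2\det(A)$ together with $\det=\operatorname{Pf}^2$ applied to the (still skew-symmetric) matrix $P^TAP$, and the sign is then pinned down by working in the polynomial ring $\R[p_{ij},a_{kl}]$, where $(u-v)(u+v)=0$ forces one factor to vanish identically because the ring is an integral domain, and the specialization $P=I$ rules out $u+v\equiv 0$. This is exactly the right way to kill the sign ambiguity; a pointwise or connectedness argument would indeed be awkward. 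One small point worth making explicit if you write this up: the identity $u^2=v^2$ holds in the polynomial ring because it holds at every real specialization and $\R$ is infinite, and the identity $\det=\operatorname{Pf}^2$ must be used as a polynomial identity valid for \emph{all} skew-symmetric matrices (not just the nonsingular ones in $S(2n,\R)$), which is standard but is stated in the paper only for nondegenerate $A$. Finally, your closing remark is more than a curiosity in this context: the exterior-power characterization $\tfrac{1}{n!}\omega_A^{\wedge n}=\operatorname{Pf}(A)\,e_1\wedge\cdots\wedge e_{2n}$ is precisely the definition of the Pfaffian that the paper adopts in Section 7.2 to prove the invariance of the higher coefficients $\sigma_k$, so the ``conceptual'' proof you relegate to a remark is the one most consonant with the rest of the paper and would unify Lemma 3.1 with the later material.
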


\begin{lemma}

If $P \in \Sp(2n)$, then $\det(P) = 1$.

\end{lemma}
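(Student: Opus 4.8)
The plan is to leverage the Pfaffian multiplicativity identity established in the immediately preceding lemma, namely $\operatorname{Pf}(P^T A P) = \operatorname{Pf}(A)\det(P)$, rather than arguing directly from the defining relation $P^T J P = J$. The reason for this choice becomes clear after a first, naive attempt: taking determinants of both sides of $P^T J P = J$ and using $\det(P^T) = \det(P)$ yields $\det(P)^2 \det(J) = \det(J)$, and since $\det(J) = \operatorname{Pf}(J)^2 \neq 0$ we may cancel to obtain $\det(P)^2 = 1$, hence $\det(P) = \pm 1$. This determinant computation alone cannot distinguish the two signs, so the entire content of the lemma lies in ruling out $\det(P) = -1$.

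To resolve the sign, I would apply the Pfaffian lemma with the specific choice $A = J$. First I would record that $\operatorname{Pf}(J) = 1$ for the standard block-diagonal $J$: each diagonal block $J_0$ contributes a factor $\operatorname{Pf}(J_0) = 1$, and the Pfaffian of a block-diagonal skew-symmetric matrix is the product of the Pfaffians of its blocks (in the $4 \times 4$ case this is the direct computation $\operatorname{Pf}(J) = af - be + cd = 1$). In particular $\operatorname{Pf}(J) \neq 0$, which is the crucial nondegeneracy we will exploit. Substituting $A = J$ into the identity gives $\operatorname{Pf}(P^T J P) = \operatorname{Pf}(J)\det(P)$.

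The final step uses the defining relation of $\Sp(2n)$. Since $P \in \Sp(2n)$, by definition $P^T J P = J$, so the left-hand side equals $\operatorname{Pf}(J)$. The identity therefore reads $\operatorname{Pf}(J) = \operatorname{Pf}(J)\det(P)$, and because $\operatorname{Pf}(J) = 1 \neq 0$ we may cancel it to conclude $\det(P) = 1$.

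The main obstacle, as indicated above, is not the magnitude $|\det(P)| = 1$ — which the elementary determinant argument delivers immediately — but fixing the sign. The Pfaffian is precisely the right tool because it is a sign-sensitive square root of the determinant: its transformation law under congruence involves $\det(P)$ linearly rather than $\det(P)^2$, so the factor of $\det(P)$ survives without being squared away. An alternative route would be a connectedness argument, showing that $\Sp(2n)$ is path-connected and that the continuous, $\pm 1$-valued function $\det$ is therefore constant, equal to its value $1$ at the identity; but this requires establishing path-connectedness of the symplectic group, which is considerably more work than the one-line Pfaffian cancellation above.
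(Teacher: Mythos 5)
Your proof is correct. The paper itself gives no proof of this lemma --- it is stated as a well-known fact with a citation --- so there is nothing to diverge from; your argument (apply the preceding Pfaffian identity $\operatorname{Pf}(P^TAP)=\operatorname{Pf}(A)\det(P)$ with $A=J$, use $P^TJP=J$, and cancel $\operatorname{Pf}(J)=1\neq 0$) is exactly the standard proof the cited references supply, and it meshes cleanly with the machinery the paper already sets up in Lemma 3.1. The one point worth checking, which you implicitly handle, is non-circularity: Lemma 3.1 holds for arbitrary $P\in M(2n,\R)$, not just symplectic $P$, so invoking it here does not presuppose $\det(P)=1$. Your framing of the issue is also the right one --- the naive determinant computation only gives $\det(P)=\pm 1$, and the Pfaffian is precisely the sign-sensitive square root of the determinant needed to pin down the sign.
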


As a corollary of Lemmas 3.1 and 3.2, we have the invariance of the Pfaffian.

\begin{lemma}[Pfaffian invariant in $\R^4$]

Let $A \in S(2n,\R)$ and let $P \in \Sp(2n)$. Then $\operatorname{Pf}(P^T A P) = \operatorname{Pf}(A)$. In other words, the Pfaffian is invariant under the group action $\rho$.

\end{lemma}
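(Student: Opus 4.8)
The plan is to obtain this result as an immediate corollary of the two preceding lemmas, so the proof is essentially a one-line composition rather than a substantive argument. First I would take an arbitrary $A \in S(2n,\R)$ and an arbitrary $P \in \Sp(2n)$, and apply Lemma 3.1 (with the matrix $P$ playing the role of the general element of $M(2n,\R)$) to write $\operatorname{Pf}(P^T A P) = \operatorname{Pf}(A)\det(P)$. Then I would invoke Lemma 3.2, which asserts that every symplectic matrix has determinant $1$, to substitute $\det(P) = 1$. Combining these yields $\operatorname{Pf}(P^T A P) = \operatorname{Pf}(A) \cdot 1 = \operatorname{Pf}(A)$, which is exactly the claim.

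There is no genuine obstacle here, since both pieces of input are quoted as known results (Lemma 3.1 is the standard behavior of the Pfaffian under congruence, and Lemma 3.2 is the classical fact that $\Sp(2n) \subseteq \mathrm{SL}(2n,\R)$). The only point deserving a remark is that the hypotheses line up cleanly: Lemma 3.1 requires only $A \in S(2n,\R)$ and $P \in M(2n,\R)$, and since $\Sp(2n) \subset M(2n,\R)$, any symplectic $P$ is in particular a general real matrix, so Lemma 3.1 applies verbatim. Thus the conclusion holds for all $P \in \Sp(2n)$, giving invariance of $\operatorname{Pf}$ under the action $\rho(P,A) = P^T A P$.

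I would present this as a short proof reading roughly: ``By Lemma 3.1, $\operatorname{Pf}(P^T A P) = \operatorname{Pf}(A)\det(P)$ for any $P \in M(2n,\R)$. Since $P \in \Sp(2n)$, Lemma 3.2 gives $\det(P) = 1$, whence $\operatorname{Pf}(P^T A P) = \operatorname{Pf}(A)$.'' This establishes that the Pfaffian is constant on each orbit of $\rho$, i.e.\ it is an invariant of the group action, completing one of the two invariance claims needed for Theorem 2.1 (the sum function being handled separately in the remainder of the section).
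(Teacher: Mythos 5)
Your proposal is correct and coincides with the paper's own proof: both apply Lemma 3.1 to get $\operatorname{Pf}(P^T A P) = \operatorname{Pf}(A)\det(P)$ and then invoke Lemma 3.2 to substitute $\det(P)=1$. Nothing further is needed.
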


\begin{proof}

By Lemma 3.1, $\operatorname{Pf}(P^T A P) = \operatorname{Pf}(A)\det(P)$. Furthermore, by Lemma 3.2, $\det{P} = 1$. The result follows.

\end{proof}

Next, we shall prove the invariance of the sum function under $\rho$. Again, we will need two lemmas.

\begin{lemma}

If $P \in \Sp(2n)$, then $P^T \in \Sp(2n)$.

\end{lemma}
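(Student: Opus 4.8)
The statement to prove is that membership in $\Sp(2n)$ is preserved under transposition. Unwinding the definition, $P^T \in \Sp(2n)$ means $(P^T)^T J P^T = J$, that is, $P J P^T = J$. So the plan is to start from the hypothesis $P^T J P = J$ and deduce $P J P^T = J$, which I would do by solving the hypothesis for $P^T$ and substituting into the target expression.

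First I would record the algebraic properties of $J$ that make this work. Block by block, $J_0^2 = -I_2$, so $J^2 = -I$ and hence $J^{-1} = -J$. I would also note that $P$ is invertible: by Lemma 3.2, $\det(P) = 1 \ne 0$ (equivalently, taking determinants in $P^T J P = J$ forces $\det(P)^2 = 1$). Invertibility of both $P$ and $J$ is what licenses the manipulations below.

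Next, beginning from $P^T J P = J$, I would right-multiply by $P^{-1}$ to obtain $P^T J = J P^{-1}$, and then right-multiply by $J^{-1} = -J$ to isolate
\[ P^T = -\,J P^{-1} J. \]
Substituting this expression into the target and using $J^2 = -I$ gives
\[ P J P^T = P J(-J P^{-1} J) = -P J^2 P^{-1} J = P P^{-1} J = J, \]
which is exactly the condition $(P^T)^T J P^T = J$. Hence $P^T \in \Sp(2n)$.

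There is no serious obstacle here; the argument is a short chain of rearrangements. The only points demanding care are ensuring $P$ is genuinely invertible before dividing by it (handled by Lemma 3.2) and correctly exploiting the identity $J^2 = -I$, which is the single special feature of $J$ on which the whole computation hinges. An alternative, slightly more structural route would be to observe that $P^{-1} \in \Sp(2n)$ and that the relation $P^T = -J P^{-1} J$ exhibits $P^T$ as a product of symplectic matrices; but this presupposes that $\Sp(2n)$ is closed under multiplication and inversion, so the direct verification above is preferable.
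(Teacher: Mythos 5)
Your proof is correct and follows essentially the same route as the paper's: both arguments start from $P^T J P = J$, use the identity $J^2 = -I$ (equivalently $J^{-1} = -J$) together with the invertibility of $P$ to express $P^T$ (or $JP^T$) in terms of $P^{-1}$ and $J$, and then verify $PJP^T = J$ by substitution. The only differences are cosmetic, namely the order in which you multiply by $P^{-1}$ and $J^{-1}$.
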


\begin{proof}

By definition, if $P \in \Sp(2n)$, then $P^T J P = J$.
Hence, 
\[J P^T J P = J^2 = -I\]
\[\implies J P^T = -(JP)^{-1} = P^{-1}J\]
\[\implies (P^T)^T J P^T = P J P^T = J,\]
as desired.

\end{proof}

\begin{lemma}

Let $P \in \Sp(2n)$. Let the column vectors of $P$ be $c_1, c_2, \ldots, c_{2n}$, and let the row vectors of $P$ be $r_1, r_2, \ldots, r_{2n}$. Then the sets $\{c_1, c_2, \ldots, c_{2n}\}$ and $\{r_1, r_2, \ldots, r_{2n}\}$ form symplectic bases for $(\R^{2n}, \omegalstd)$.

\end{lemma}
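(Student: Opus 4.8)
The plan is to show that each of the two vector sets satisfies the defining relations of a symplectic basis, namely that its Gram matrix with respect to $\omegalstd$ equals $J$ and that it is linearly independent. Recall that an ordered basis $\{v_1, \ldots, v_{2n}\}$ of $\R^{2n}$ is a symplectic basis of $(\R^{2n}, \omegalstd)$ precisely when $\omegalstd(v_i, v_j) = v_i^T J v_j = J_{ij}$ for all $1 \le i, j \le 2n$; under the block-diagonal form of $J$ used here, this single matrix equation encodes the usual symplectic-basis conditions, that $\omegalstd(v_{2i-1}, v_{2i}) = 1$ while $\omegalstd$ vanishes on every other pair of basis vectors.

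First I would treat the columns. Writing $c_j = P e_j$, where $e_1, \ldots, e_{2n}$ is the standard basis, the defining identity $P^T J P = J$ for $P \in \Sp(2n)$ gives
\[ c_i^T J c_j = (P e_i)^T J (P e_j) = e_i^T (P^T J P) e_j = e_i^T J e_j = J_{ij}. \]
Thus the Gram matrix of $\{c_1, \ldots, c_{2n}\}$ with respect to $\omegalstd$ is exactly $J$. Since $\det(P) = 1$ by Lemma 3.2, $P$ is invertible and its columns are linearly independent, hence form a basis of $\R^{2n}$. Combining these two facts shows that $\{c_1, \ldots, c_{2n}\}$ is a symplectic basis.

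For the rows, I would exploit that the rows of $P$ are the columns of $P^T$. By Lemma 3.4, $P^T \in \Sp(2n)$, so the computation above applies verbatim with $P$ replaced by $P^T$: the columns of $P^T$, which are precisely the rows of $P$, have Gram matrix $J$ and form a basis, and therefore constitute a symplectic basis.

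There is no serious obstacle here; the entire content is the observation that $c_i^T J c_j = (P^T J P)_{ij}$, together with the reduction of the row statement to the column statement via Lemma 3.4. The only point requiring a little care is confirming that the matrix relation $v_i^T J v_j = J_{ij}$ is equivalent to the standard symplectic-basis relations under the block ordering of $J$ adopted in this paper, which is immediate from the block-diagonal structure of $J$.
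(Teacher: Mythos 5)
Your proof is correct and follows essentially the same route as the paper's: both derive the column case directly from $c_i^T J c_j = e_i^T(P^T J P)e_j = J_{ij}$ and reduce the row case to the column case via the fact that $P^T \in \Sp(2n)$. Your addition of the explicit linear-independence check via $\det(P)=1$ is a minor (and welcome) tightening of a point the paper leaves implicit.
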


\begin{proof}

Collapse the column vectors $c_1, c_2, \ldots, c_{2n}$ of $P$ and write
\[
P = 
\begin{bmatrix}
c_1 & c_2 & \dots & c_{2n}
\end{bmatrix}.
\] 

By definition, $P^T J P = J$. Therefore,
\[
\begin{bmatrix}
c_1^T \\
c_2^T \\
\vdots \\
c_{2n}^T
\end{bmatrix}
J
\begin{bmatrix}
c_1 & c_2 & \dots & c_{2n}
\end{bmatrix}
=
\begin{bmatrix}
c_1^T \\
c_2^T \\
\vdots \\
c_{2n}^T
\end{bmatrix}
\begin{bmatrix}
Jc_1 & Jc_2 & \dots & Jc_{2n}
\end{bmatrix}
\]

\[
=
\begin{bmatrix}
c_1^T J c_1 & c_1^T J c_2 & \dots & c_1^T J c_{2n} \\
c_2^T J c_1 & c_2^T J c_2 & \dots & \vdots \\
\vdots & \vdots & \ddots & c_{2n-1}^T J c_{2n} \\
c_{2n}^T J c_1 & \dots & c_{2n}^T J c_{2n-1} & c_{2n}^T J c_{2n}
\end{bmatrix}
\]

\[
=
\begin{bmatrix}
\omegalstd(c_1,c_1) & \omegalstd(c_1,c_2) & \dots & \omegalstd(c_1,c_{2n}) \\
\omegalstd(c_2,c_1) & \omegalstd(c_2,c_2) & \dots & \vdots \\
\vdots & \vdots & \ddots & \omegalstd(c_{2n-1},c_{2n}) \\
\omegalstd(c_{2n},c_1) & \dots & \omegalstd(c_{2n},c_{2n-1}) & \omegalstd(c_{2n},c_{2n})
\end{bmatrix}
\]

\[
= J
=
\begin{bmatrix}
0 & 1 & \dots & \dots & 0 \\
-1 & 0 & \ddots & \ddots & \vdots \\
\vdots & \ddots & \ddots & \ddots & \vdots \\
\vdots & \ddots & \ddots & 0 & 1 \\
0 & \dots & \dots & -1 & 0
\end{bmatrix}.
\]

Equating corresponding entries, we find that $\{c_1, c_2, \ldots, c_{2n}\}$ forms a symplectic basis for $(\R^{2n}, \omegalstd)$. To see that the same holds for the row vectors of $P$, note that by Lemma 7.6, $P^T \in \Sp(2n)$. The result follows.

\end{proof}

We are now ready to prove the invariance of the sum function.

\begin{lemma}[Sum function invariant in $\R^{2n}$]

Let $A \in S(2n,\R)$ and let $P \in \Sp(2n)$. Then $\operatorname{s}(P^T A P) = \operatorname{s}(A)$. In other words, the sum function is invariant under the group action $\rho$.

\end{lemma}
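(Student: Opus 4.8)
The plan is to recast the sum function as a trace, which reduces the whole statement to a one-line manipulation. Concretely, I would first establish the identity
\[ \operatorname{s}(A) = -\tfrac{1}{2}\operatorname{tr}(JA) \]
for every $A \in S(2n,\R)$ — precisely the description of the sum function advertised in the abstract. This follows from inspecting the diagonal of $JA$ block by block. In the $i$-th $2\times 2$ block, the only nonzero entries of rows $2i-1$ and $2i$ of $J$ are the $1$ in position $(2i-1,2i)$ and the $-1$ in position $(2i,2i-1)$, so that $(JA)_{2i-1,2i-1} = A_{2i,2i-1} = -A_{2i-1,2i}$ and $(JA)_{2i,2i} = -A_{2i-1,2i}$, using the skew-symmetry of $A$. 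Summing over $i$ yields $\operatorname{tr}(JA) = -2\sum_{i=1}^{n} A_{2i-1,2i} = -2\operatorname{s}(A)$, which is the claimed identity.

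With this in hand, invariance is immediate from two facts: the cyclic property of the trace and the relation $P J P^T = J$. The latter is exactly what the proof that $P^T \in \Sp(2n)$ establishes, since $(P^T)^T J P^T = P J P^T = J$. I would then compute
\[ \operatorname{s}(P^T A P) = -\tfrac{1}{2}\operatorname{tr}(J P^T A P) = -\tfrac{1}{2}\operatorname{tr}(P J P^T A) = -\tfrac{1}{2}\operatorname{tr}(JA) = \operatorname{s}(A), \]
where the second equality is cyclicity of the trace (moving the rightmost factor $P$ to the front) and the third uses $P J P^T = J$. This completes the argument.

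The only genuine content lies in the first step; everything afterward is formal. Accordingly, the main obstacle — such as it is — is verifying $\operatorname{s}(A) = -\tfrac{1}{2}\operatorname{tr}(JA)$ cleanly in general dimension $2n$ rather than just for $4\times 4$ matrices, where one must track the block structure of $J$ carefully to confirm that each diagonal block of $JA$ contributes exactly $-2A_{2i-1,2i}$. I note in passing that a more combinatorial route avoids the trace altogether: writing the columns of $P$ as a symplectic basis $c_1,\dots,c_{2n}$ via the preceding lemma, one has $\operatorname{s}(P^T A P) = \sum_{i=1}^{n} c_{2i-1}^T A c_{2i}$, and one would argue this sum is independent of the chosen symplectic basis. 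Since that approach effectively re-proves the trace identity in disguise, I would favor the trace computation for its brevity.
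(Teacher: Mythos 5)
Your proof is correct, but it takes a genuinely different route from the paper's. You reduce everything to the identity $\operatorname{s}(A) = -\tfrac{1}{2}\operatorname{tr}(JA)$ (which you verify correctly from the block structure of $J$ and the skew-symmetry of $A$, and which legitimately applies to $P^TAP$ since that matrix is again skew-symmetric), and then finish by cyclicity of the trace together with $PJP^T = J$. The paper instead argues entry-by-entry: it writes $\operatorname{s}(P^TAP) = \sum_{k} \omega(c_{2k-1}, c_{2k})$ where the $c_i$ are the columns of $P$, expands each $c_k$ in the standard basis, and collects coefficients to get $\sum_{1\le i<j\le 2n} \omega_{\mathrm{lstd}}(r_i,r_j)\,\omega(e_i,e_j)$, which collapses to $\operatorname{s}(A)$ because the rows $r_i$ of $P$ form a symplectic basis --- precisely the ``combinatorial route'' you sketch and decline at the end of your proposal. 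The two arguments lean on the same underlying fact (that $P^T \in \Sp(2n)$, which the paper packages as ``the rows of $P$ form a symplectic basis'' and you use directly as $PJP^T = J$), but your trace formulation is shorter, avoids the bookkeeping of the double sum, and makes the invariant's coordinate-free nature explicit; it is also the characterization the paper itself advertises in the abstract but never exploits in its proof. The paper's version, by contrast, generalizes more directly to the higher invariants $s_k$ treated later, where no single trace identity is available.
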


\begin{proof}

Let the matrix representation of $P$ be
\[
\begin{bmatrix}
a_{1,1} & a_{1,2} & \dots & \dots & a_{1,2n} \\
a_{2,1} & a_{2,2} & \ddots & \ddots & \vdots \\
\vdots & \ddots & \ddots & \ddots & \vdots \\
\vdots & \ddots & \ddots & a_{2n-1,2n-1} & a_{2n-1,2n} \\
a_{2n,1} & \dots & \dots & a_{2n,2n-1} & a_{2n,2n}
\end{bmatrix}.
\]

Let $c_1, c_2, \ldots, c_{2n}$ denote the column vectors of $P$, and let $r_1, r_2, \ldots r_{2n}$ denote the row vectors of $P$. Let $\omega$ denote the linear symplectic form with matrix representation $A$.

Then the matrix representation of $P^T A P$ is
\[
\begin{bmatrix}
\omega(c_1,c_1) & \omega(c_1,c_2) & \dots & \omega(c_1,c_{2n}) \\
\omega(c_2,c_1) & \omega(c_2,c_2) & \ddots & \vdots \\
\vdots & \ddots & \ddots & \omega(c_{2n-1},c_{2n}) \\
\omega(c_{2n},c_1) & \dots & \omega(c_{2n},c_{2n-1}) & \omega(c_{2n},c_{2n})
\end{bmatrix}.
\]

Furthermore, since the matrix representation of $A$ can be rewritten as
\[
\begin{bmatrix}
\omega(e_1,e_1) & \omega(e_1,e_2) & \dots & \omega(e_1,e_{2n}) \\
\omega(e_2,e_1) & \omega(e_2,e_2) & \ddots & \vdots \\
\vdots & \ddots & \ddots & \omega(e_{2n-1},e_{2n}) \\
\omega(e_{2n},e_1) & \dots & \omega(e_{2n},e_{2n-1}) & \omega(e_{2n},e_{2n})
\end{bmatrix},
\]

it suffices to show that $\operatorname{s}(P^T A P) = \sum_{k=1}^{n} \omega(c_{2k-1}, c_{2k})$ is equal to $\operatorname{s}(A) = \sum_{k=1}^{n} \omega(e_{2k-1}, e_{2k})$. To see this, note that
\[c_k = \sum_{j=1}^{2n} a_{j,k}e_j\]

for all $1\le k\le 2n$. Hence, by the bilinearity of $\omega$,
\[\sum_{k=1}^{n} \omega(c_{2k-1}, c_{2k}) = \sum_{1\le i < j\le 2n} \left(\sum_{k=1}^{n} (a_{i, 2k-1}a_{j, 2k} - a_{i, 2k}a_{j, 2k-1})\right)\omega(e_i, e_j)\]

\[= \sum_{1\le i < j\le 2n} \omegalstd(r_i, r_j) \omega(e_i, e_j).\]

Yet, by Lemma 7.7, $\{r_1, r_2, \ldots, r_{2n}\}$ is a symplectic basis for $(\R^{2n}, \omegalstd)$. Therefore, the above sum reduces to $\sum_{k=1}^{2n} \omega(e_{2k-1}, e_{2k}) = \operatorname{s}(A)$, as desired.

\end{proof}

Theorem 2.1 and 2.7 follows.

\section{Proof of Orbit Space Classification in $\R^4$}

\subsection{Equivalence Lemma for $\R^4$}

In order to establish the Orbit Space Classification of Theorem 2.2, we will develop a method to classify equivalent skew-symmetric matrices based on their so-called symplectic bases.

\begin{definition}[Symplectic basis in $\R^4$]

Let $A \in S(4,\R)$ and let $\mathcal{B} = \{v_1, v_2, v_3, v_4\}$ be a basis for $\R^4$.  The basis $\mathcal{B}$ is said to be a \textit{symplectic basis for $A$} if the following equalities hold:
\[v_1^T A v_2 = 1\]
\[v_1^T A v_3 = 0\]
\[v_1^T A v_4 = 0\]
\[v_2^T A v_3 = 0\]
\[v_2^T A v_4 = 0\]
\[v_3^T A v_4 = 1.\]

\end{definition}

The following theorem is well-known consequence of the Modified Gram-Schmidt Algorithm \cite[Theorem 2.10.4]{FirstSteps}.

\begin{theorem}

If $A \in S(4,\R)$, then there exists a basis $\mathcal{B}$ for $\R^4$ which is a symplectic basis for $A$.

\end{theorem}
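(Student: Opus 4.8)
The plan is to carry out a symplectic analogue of Gram--Schmidt, building the basis two vectors at a time. The underlying structure is an orthogonal-type decomposition $\R^4 = U \oplus W$, where $U = \operatorname{span}\{v_1, v_2\}$ is a two-dimensional ``symplectic plane'' on which $A$ restricts to the standard form, and $W$ is its $A$-orthogonal complement, on which $A$ again restricts nondegenerately so that the construction may be repeated on a space of half the remaining dimension.

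First I would produce the pair $v_1, v_2$. Since $A$ is nondegenerate it is nonzero, so I may pick any nonzero $v_1 \in \R^4$; the linear functional $x \mapsto v_1^T A x$ is then nonzero by nondegeneracy, so there exists $v$ with $v_1^T A v \neq 0$, and setting $v_2 = v/(v_1^T A v)$ gives $v_1^T A v_2 = 1$. Next I would define the symplectic complement
\[ W = \{ x \in \R^4 \mid v_1^T A x = 0,\ v_2^T A x = 0\}. \]
A short computation using skew-symmetry shows that the two functionals $v_1^T A(\cdot)$ and $v_2^T A(\cdot)$ are linearly independent (because $A$ is invertible and $v_1, v_2$ are independent), so $\dim W = 2$; moreover the relations $v_1^T A v_1 = 0$, $v_1^T A v_2 = 1$, $v_2^T A v_2 = 0$ force $U \cap W = \{0\}$, hence $\R^4 = U \oplus W$.

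The crux of the argument is that $A$ restricts to a nondegenerate form on $W$. I would prove this by taking $x \in W$ with $x^T A w = 0$ for every $w \in W$, and then writing an arbitrary $z \in \R^4$ as $z = \alpha v_1 + \beta v_2 + w$ with $w \in W$ via the direct-sum decomposition. Because $x \in W$ the terms $x^T A v_1$ and $x^T A v_2$ vanish by skew-symmetry, and $x^T A w = 0$ by hypothesis, so $x^T A z = 0$ for all $z$; global nondegeneracy of $A$ then yields $x = 0$. This is the step I expect to be the main obstacle, since it is precisely where the nondegeneracy of $A$ must be leveraged to guarantee that the inductive construction does not stall on the complement.

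Finally I would apply the first step to the two-dimensional space $(W, A|_W)$: pick nonzero $v_3 \in W$, find $v \in W$ with $v_3^T A v \neq 0$, and set $v_4 = v/(v_3^T A v)$, so that $v_3^T A v_4 = 1$ and $\{v_3, v_4\}$ spans $W$. Then $\mathcal{B} = \{v_1, v_2, v_3, v_4\}$ is a basis of $\R^4$; the equalities $v_1^T A v_2 = v_3^T A v_4 = 1$ hold by construction, while $v_1^T A v_3 = v_1^T A v_4 = v_2^T A v_3 = v_2^T A v_4 = 0$ hold precisely because $v_3, v_4 \in W$. Hence $\mathcal{B}$ is a symplectic basis for $A$, as desired.
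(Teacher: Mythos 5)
Your proof is correct. The paper does not prove this theorem itself but simply cites it as a well-known consequence of the Modified Gram--Schmidt Algorithm, and your argument --- constructing a symplectic pair $v_1, v_2$, passing to the symplectic complement $W$, verifying that $A$ restricts nondegenerately to $W$, and repeating --- is precisely that standard construction, correctly carried out, so you have in effect supplied the proof the paper delegates to a reference.
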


\begin{lemma}

Let $\{v_1, v_2, v_3, v_4\}$ and $\{w_1, w_2, w_3, w_4\}$ be two bases for $\R^4$. Define the linear transformation $P : \R^4 \rightarrow \R^4$ such that $Pv_i = w_i$ for $1\le i\le 4$. Then $P^T J P = J$ if and only if $v_i^T J v_j = w_i^T J w_j$ for all $1\le i, j \le 4$.

\end{lemma}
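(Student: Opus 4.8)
The plan is to translate both directions of the claimed equivalence into a single statement about the matrix $M = P^T J P - J$, and then exploit the hypothesis that $\{v_1, v_2, v_3, v_4\}$ is a basis. The key observation driving everything is that, since $P$ is defined by $Pv_i = w_i$, we have
\[
w_i^T J w_j = (Pv_i)^T J (Pv_j) = v_i^T (P^T J P) v_j
\]
for all $1 \le i, j \le 4$. Consequently, the condition $v_i^T J v_j = w_i^T J w_j$ is exactly the condition $v_i^T (P^T J P - J) v_j = 0$, i.e.\ $v_i^T M v_j = 0$, for all $i, j$. Before doing anything else I would note that $P$ is well-defined and invertible: it is determined on a basis $\{v_i\}$ and sends it to another basis $\{w_i\}$.

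For the forward direction I would simply assume $P^T J P = J$, so that $M = 0$; the displayed computation then yields $w_i^T J w_j = v_i^T J v_j$ immediately. This direction is essentially a one-line substitution.

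For the reverse direction I would assume $v_i^T M v_j = 0$ for all $1 \le i, j \le 4$ and deduce that $M$ is the zero matrix. Here is where the basis hypothesis does the work: given arbitrary $x, y \in \R^4$, I write $x = \sum_i a_i v_i$ and $y = \sum_j b_j v_j$, so that bilinearity of the form $(x,y) \mapsto x^T M y$ gives
\[
x^T M y = \sum_{i,j} a_i b_j\, v_i^T M v_j = 0.
\]
Since this holds for all $x, y \in \R^4$, the matrix $M$ must vanish, which is precisely the statement $P^T J P = J$.

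The argument is routine linear algebra, and I do not expect a genuine obstacle. The only step carrying real content is the passage from ``the bilinear form vanishes on all pairs of basis vectors'' to ``the bilinear form, hence $M$, vanishes identically''; everything else is bookkeeping around the identity $w_i^T J w_j = v_i^T (P^T J P) v_j$. If anything needs care, it is making explicit that $P$ exists as a linear map and is invertible, so that the bases $\{v_i\}$ and $\{w_i\}$ are genuinely related by $P$ as assumed.
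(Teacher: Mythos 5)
Your proof is correct and follows essentially the same route as the paper's: a one-line substitution for the forward direction, and for the converse an expansion of arbitrary $x,y$ in the basis $\{v_i\}$ together with bilinearity to conclude that the form $x^T(P^TJP-J)y$ vanishes identically, hence the matrix does. The only cosmetic difference is that you package the conclusion as $M=P^TJP-J=0$ directly, whereas the paper makes the final step explicit by evaluating on standard basis vectors to compare entries.
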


\begin{proof}

($\rightarrow$) If $P^T J P = J$, then 
\[w_i^T J w_j = (Pv_i)^T J (Pv_j) = v_i^T (P^T J P) v_j = v_i^T J v_j,\]
as desired.

($\leftarrow$) We may rewrite the given condition as \[v_i^T J v_j = (Pv_i)^T J (Pv_j) = v_i^T (P^T J P) v_j\] for all $1\le i, j\le 4$. As $\{v_1, v_2, v_3, v_4\}$ forms a basis for $\R^4$, it follows that for all $x, y \in \R^4$, $x^T J y = x^T (P^T J P) y$. In particular, for the standard basis vectors $x = e_i$ and $y = e_j$, we have $J_{i,j} = e_i^T J e_j = e_i^T (P^T J P) e_j = (P^T J P)_{i,j}$. It follows that all of the entries of $J$ and $P^T J P$ are equal, so that $J = P^T J P$. The result follows.

\end{proof}

The following definition will allow us to present our primary method of matrix classification, the Equivalence Lemma.

\begin{definition}[Basis-values for $\R^4$]

Given a symplectic basis $\mathcal{B} = \{v_1, v_2, v_3, v_4\}$ for the matrix $A \in S(4,\R)$, the \textit{set of basis-values of $\mathcal{B}$} is the ordered $6$-tuple of real numbers \[(v_1^T J v_2, v_1^T J v_3, v_1^T J v_4, v_2^T J v_3, v_2^T J v_4, v_3^T J v_4).\]

\end{definition}

At last, we may introduce the following corollary of Lemma 4.2.

\begin{corollary}[Equivalence Lemma in $\R^4$]

Let $A, B \in S(4,\R)$. Let $\mathcal{B}_1 = \{v_1, v_2, v_3, v_4\}$ be a symplectic basis for $A$ and let $\mathcal{B}_2 = \{w_1, w_2, w_3, w_4\}$ be a symplectic basis for $B$. If the set of basis-values of $\mathcal{B}_1$ is equal to the set of basis-values of $\mathcal{B}_2$, then $A$ and $B$ are equivalent under the group action $\rho$.

\end{corollary}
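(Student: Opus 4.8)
The plan is to construct the required symplectomorphism directly from the two given bases, rather than first reducing $A$ and $B$ to a normal form. Since $\mathcal{B}_1 = \{v_1, v_2, v_3, v_4\}$ is a basis for $\R^4$, there is a unique linear map $Q : \R^4 \to \R^4$ determined by $Q v_i = w_i$ for $1 \le i \le 4$. The goal is to show that $Q \in \Sp(4)$ and that $Q^T B Q = A$; since $\Sp(4)$ is a group, these two facts place $A$ and $B$ in the same orbit of $\rho$.

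First I would verify that $Q \in \Sp(4)$, and this is exactly where the hypothesis on basis-values enters. Lemma 4.2, applied to the map $Q$ carrying $\{v_i\}$ to $\{w_i\}$, states that $Q^T J Q = J$ if and only if $v_i^T J v_j = w_i^T J w_j$ for all $1 \le i, j \le 4$. The set of basis-values records these products only for the six pairs with $i < j$, so I must promote the matching of these six numbers to an equality over all pairs $(i,j)$. This is immediate from the skew-symmetry of $J$: the diagonal products $v_i^T J v_i$ and $w_i^T J w_i$ all vanish, while each product with $i > j$ is the negative of its transpose, $v_i^T J v_j = -\,v_j^T J v_i$, and similarly for the $w_i$. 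Hence equality of the two $6$-tuples forces $v_i^T J v_j = w_i^T J w_j$ for every pair, and Lemma 4.2 gives $Q \in \Sp(4)$.

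Next I would identify $Q^T B Q$ with $A$. Here I would unpack the definition of a symplectic basis: together with the skew-symmetry of the forms, the six defining equalities say precisely that the matrix of $A$ in the basis $\mathcal{B}_1$ and the matrix of $B$ in the basis $\mathcal{B}_2$ both equal $J$, that is, $v_i^T A v_j = J_{i,j}$ and $w_i^T B w_j = J_{i,j}$ for all $i,j$. Using $Q v_i = w_i$, I then compute
\[
v_i^T (Q^T B Q) v_j = (Q v_i)^T B (Q v_j) = w_i^T B w_j = J_{i,j} = v_i^T A v_j
\]
for all $i,j$. Because $\{v_1, v_2, v_3, v_4\}$ is a basis, the bilinear forms $Q^T B Q$ and $A$ agree on all pairs of basis vectors, which forces the two matrices to be equal by the same argument used in the proof of Lemma 4.2; hence $Q^T B Q = A$.

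Finally, setting $P = Q^{-1} \in \Sp(4)$ gives $P^T A P = (Q^{-1})^T (Q^T B Q) Q^{-1} = B$, so $B = \rho(P, A)$ and $A$ and $B$ are equivalent under $\rho$; one could equally stop at $Q^T B Q = A$, since orbit-equivalence is symmetric. I expect no serious obstacle: the statement is a clean consequence of Lemma 4.2 once the symplectic bases are in hand. The only point demanding care is the bookkeeping in the first step, namely the observation that the six recorded basis-values, together with the skew-symmetry of $J$, determine all sixteen products $v_i^T J v_j$ — which is precisely what makes Lemma 4.2 applicable.
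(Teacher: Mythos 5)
Your proposal is correct and follows essentially the same route as the paper: define the map sending $v_i \mapsto w_i$, apply Lemma 4.2 to conclude it lies in $\Sp(4)$, and then compare the two bilinear forms on the basis $\{v_i\}$ to get $Q^T B Q = A$. The only difference is that you spell out two points the paper leaves implicit — using skew-symmetry to promote the six recorded basis-values to all sixteen pairs, and inverting $Q$ at the end — which is a welcome bit of extra care but not a different argument.
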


\begin{proof}

Let $P : \R^4 \rightarrow \R^4$ be the linear transformation such that $Pv_i = w_i$ for $1\le i\le 4$. As the set of basis-values of $\mathcal{B}_1$ is equal to the set of basis-values for $\mathcal{B}_2$, by Lemma 4.2, it follows that $P^T J P = J$.

As $\mathcal{B}_1$ and $\mathcal{B}_2$ are symplectic bases for $A$ and $B$, respectively, $v_i^T A v_j = w_i^T B w_j$ for all $1\le i,j \le 4$. Hence, $v_i^T A v_j = (Pv_i)^T B (Pv_j)^T$ for all $1\le i,j \le 4$, which implies that $v_i^T A v_j = v_i^T (P^T B P) v_j$ for all $1\le i,j \le 4$. 

As $\{v_1, v_2, v_3, v_4\}$ is a basis for $\R^4$, it follows that $x^T A y = x^T (P^T B P) y$ for all $x, y \in \R^4$. In particular, for the standard basis vectors $x = e_i$ and $y = e_j$, we have $A_{i,j} = e_i^T A e_j = e_i^T (P^T B P) e_j = (P^T B P)_{i,j}$. It follows that all of the entries of $A$ and $P^T B P$ are equal, so that $A = P^T B P$. The result follows.

\end{proof}

The Equivalence Lemma will be employed extensively in the following section. A few remarks on its use are in order. Recall that the set of basis-values of a basis $\mathcal{B}$ is an \textit{ordered tuple.} Thus, in order to apply the Equivalence Lemma to two sets of basis-values $(a_1, a_2, a_3, a_4, a_5, a_6)$ and $(b_1, b_2, b_3, b_4, b_5, b_6)$, we must ensure that $a_i = b_i$ holds for all $1\le i\le 6$.

\subsection{Basis Classification}

In this section, we complete the proof of the Orbit Space Classification of Theorem 2.2. The proof is divided into two lemmas which establish the equivalence of the matrices in each of the sets $\mathcal{A}_{p,q}^+$ and $\mathcal{A}_{p,q}^-$.

\begin{lemma}

Let $p > 0$ and $q \in \R$. Then the elements of the set
\[\mathcal{A}_{p,q}^{+} = \{ A \in S(4,\R) \mid A \not = \pm\sqrt{p} J, \operatorname{Pf}(A) = p, \operatorname{s}(A) = q\}\]

are equivalent under the group action $\rho$.

\end{lemma}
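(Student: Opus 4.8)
The plan is to deduce the lemma from the Equivalence Lemma (Corollary 4.4). It suffices to produce, for \emph{each} $A \in \mathcal{A}_{p,q}^{+}$, a symplectic basis $\{v_1,v_2,v_3,v_4\}$ for $A$ whose tuple of basis-values $(v_1^T J v_2, v_1^T J v_3, v_1^T J v_4, v_2^T J v_3, v_2^T J v_4, v_3^T J v_4)$ equals one fixed tuple $T(p,q)$ depending only on $p$ and $q$. Once this is achieved, any two matrices in $\mathcal{A}_{p,q}^{+}$ carry symplectic bases with identical basis-values, so Corollary 4.4 makes them $\rho$-equivalent, and transitivity gives the full claim.

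The organizing device I would introduce is the operator $K = J^{-1}A$. Since $A$ and $J$ are both skew, $K$ is self-adjoint with respect to $J$ (that is, $K^T J = JK$, equivalently $A = JK$ and $v^T A w = v^T J K w$), so its spectral data encode the two invariants. The computational engine I would establish first is the quadratic identity $K^2 - \operatorname{s}(A)\,K + \operatorname{Pf}(A)\,I = 0$, equivalently $A J^{-1} A = \operatorname{s}(A)\,A - \operatorname{Pf}(A)\,J$; this is a Cayley--Hamilton statement for the degree-two Pfaffian characteristic polynomial $t^2 - qt + p$, whose discriminant is exactly $\Delta = \tfrac{q^2}{4} - p$. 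The argument then splits by the sign of $\Delta$, which is fixed once $p,q$ are fixed.

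When $\Delta > 0$ the roots $\lambda_{\pm} = \tfrac{q}{2} \pm \sqrt{\Delta}$ are real, distinct, and nonzero (their product is $p > 0$), so $\R^4 = \ker(K - \lambda_+ I) \oplus \ker(K - \lambda_- I)$ splits into two $J$-orthogonal symplectic planes, the orthogonality being forced by self-adjointness together with $\lambda_+ \neq \lambda_-$. On the $\lambda_{\pm}$-plane one has $A u = \lambda_{\pm} J u$, so a $J$-symplectic basis of each plane, rescaled by $1/\lambda_{\pm}$, becomes a symplectic basis for $A$ with basis-values $(1/\lambda_+, 0, 0, 0, 0, 1/\lambda_-)$; this tuple depends only on $(p,q)$, as required. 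The case $\Delta < 0$ is handled analogously, replacing the real eigenspaces by the real $K$-invariant planes coming from the complex eigenvectors of $K$; the resulting canonical basis-values again depend only on $(p,q)$ through $\tfrac{q}{2}$ and $\sqrt{-\Delta}$.

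The main obstacle is the degenerate stratum $\Delta = 0$, where $q = \pm 2\sqrt{p}$ and $t^2 - qt + p = (t - \tfrac{q}{2})^2$. Here $N = K - \tfrac{q}{2} I$ is nilpotent with $N^2 = 0$, and it is precisely the exclusion $A \neq \pm\sqrt{p}\,J$ that guarantees $N \neq 0$; combined with $\operatorname{Pf}(A - \tfrac{q}{2} J) = -\Delta = 0$, this forces $\operatorname{rank} N = 2$ and $\operatorname{im} N = \ker N$, pinning down the Jordan type as two blocks of size two. The delicate step, and the heart of the proof, is to manufacture a single symplectic Jordan basis out of the nilpotent chains of $N$: a basis that is simultaneously symplectic for $A$ \emph{and} realizes a prescribed $J$-Gram tuple. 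Controlling the $J$-values rather than merely the $A$-values is what requires genuine work, since both forms must be normalized at once; this is where I expect the calculation to be subtle and where the hypothesis $A \neq \pm\sqrt{p}\,J$ is indispensable. Once this canonical basis is produced for every $A$ in the $\Delta = 0$ stratum, Corollary 4.4 concludes the argument exactly as in the nondegenerate cases.
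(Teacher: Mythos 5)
Your strategy---normalizing the pair $(J,A)$ via the $J$-self-adjoint (skew-Hamiltonian) operator $K=J^{-1}A$ and splitting on the sign of $\Delta$---is a genuinely different and more conceptual route than the paper's, which instead does an explicit case analysis on which of the entries $b,c,d,e$ of $A$ vanish, exhibiting concrete symplectic bases and concrete matrices in $\Sp(4)$ for each pattern of zeros. Your $\Delta>0$ branch is sound (granting the quadratic identity, the two eigenplanes are $2$-dimensional, mutually $J$- and $A$-orthogonal, and carry $A=\lambda_{\pm}J$, so the basis-values $(1/\lambda_+,0,0,0,0,1/\lambda_-)$ depend only on $(p,q)$). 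But be aware that the identity $K^2-\operatorname{s}(A)K+\operatorname{Pf}(A)I=0$ is \emph{not} Cayley--Hamilton: the characteristic polynomial of $K$ is $(t^2-qt+p)^2$, so Cayley--Hamilton only gives the square of your identity. The identity itself is true (it follows from the Pfaffian-adjugate relation $A\widetilde{A}=\operatorname{Pf}(A)I$ by polarizing against $J$, or equivalently from the even Jordan structure of skew-Hamiltonian matrices), but it is load-bearing---without it a size-$4$ Jordan block or a pair of size-$2$ blocks at $\lambda_{\pm}$ is not excluded---so it must actually be proved, not asserted.

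There are two genuine gaps beyond that. First, the $\Delta<0$ case is \emph{not} analogous to $\Delta>0$ as claimed: for any real $v$, the $K$-invariant plane $\operatorname{span}(v,Kv)$ satisfies $v^TJKv=v^TAv=0$ and $(Kv)^TJv=v^TK^TJv=v^TAv=0$, so every such plane is \emph{Lagrangian} for $J$ (and for $A$). Your prescription ``take a $J$-symplectic basis of each plane and rescale'' therefore has nothing to normalize; the decomposition is into two Lagrangian planes and the entire symplectic pairing lives between them, requiring a different (dual-pair) construction. Second, the $\Delta=0$ stratum---the only place the hypothesis $A\neq\pm\sqrt{p}J$ does any work, and by your own account the heart of the proof---is described (rank-$2$ nilpotent $N$ with $\operatorname{im}N=\ker N$ Lagrangian for both forms, simultaneous normalization of the $A$- and $J$-Gram matrices) but not carried out. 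As written, the proposal identifies what must be done there without doing it, so the lemma is not yet proved on that stratum. By contrast, the paper's entry-by-entry case analysis, while less illuminating, closes every case with an explicit basis or an explicit symplectic matrix and never needs the spectral dichotomy at all.
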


\begin{proof}

Let $A \in \mathcal{A}_{p,q}^+$ be given as
\[
A =
\begin{bmatrix}
0 & a & b & c \\
-a & 0 & d & e \\
-b & -d & 0 & f \\
-c & -e & -f & 0 \\
\end{bmatrix}.
\]

We shall split into cases in order to show that all matrices in $\mathcal{A}_{p,q}^+$ are equivalent.

\textbf{Case 1:} $b \not = 0$. In this case, the basis $\mathcal{B}_1 = \{v_1, v_2, v_3, v_4\}$, where
\[v_1 = 
\begin{bmatrix}
1 \\
0 \\
0 \\
0
\end{bmatrix}\]

\[v_2 = 
\begin{bmatrix}
0 \\
0 \\
\frac{1}{b} \\
0
\end{bmatrix}\]

\[v_3 = 
\begin{bmatrix}
-\frac{d}{b} \\
1 \\
-\frac{a}{b} \\
0
\end{bmatrix}\]

\[v_4 = 
\begin{bmatrix}
-\frac{f}{af-be+cd} \\
0 \\
\frac{c}{af-be+cd} \\
-\frac{b}{af-be+cd}
\end{bmatrix},\]

is a symplectic basis for $A$. Furthermore, the set of basis-values of $\mathcal{B}_1$ is 
\[\left(0, 1, 0, 0, -\frac{1}{af-be+cd}, \frac{a+f}{af-be+cd}\right).\]

As $af - be + cd = p$ and $a+f = q$ are fixed, the Equivalence Lemma implies that all matrices in Case 1 are equivalent.

\textbf{Case 2:} $b = 0$ and $c \neq 0$. In this case, the basis $\mathcal{B}_2 = \{v_1, v_2, v_3, v_4\}$, where
\[v_1 = 
\begin{bmatrix}
1 \\
0 \\
0 \\
0
\end{bmatrix}\]

\[v_2 = 
\begin{bmatrix}
0 \\
0 \\
0 \\
\frac{1}{c}
\end{bmatrix}\]

\[v_3 = 
\begin{bmatrix}
-\frac{e}{c} \\
1 \\
0 \\
-\frac{a}{c}
\end{bmatrix}\]

\[v_4 = 
\begin{bmatrix}
-\frac{f}{af+cd} \\
0 \\
\frac{c}{af+cd} \\
-\frac{b}{af+cd}
\end{bmatrix},\]

is a symplectic basis for $A$. Furthermore, the set of basis-values of $\mathcal{B}_2$ is
\[\left(0, 1, 0, 0, -\frac{1}{af+cd}, \frac{a+f}{af+cd}\right).\]

As $af + cd = p$ and $a + f = q$, the Equivalence Lemma implies that all matrices in Case 2 are equivalent. Furthermore, since the set of basis-values of the matrices in Case 2 is equal to the set of basis-values of the matrices in Case 1, the Equivalence Lemma implies that all matrices in Cases 1 and 2 are equivalent.

\textbf{Case 3:} $b = c = 0$ and $d \neq 0$. In this case, we may apply the symplectic permutation matrix $\tilde{P} \in \Sp(4)$ given by
\[
\tilde{P} =
\begin{bmatrix}
0 & 0 & 1 & 0 \\
0 & 0 & 0 & 1 \\
1 & 0 & 0 & 0 \\
0 & 1 & 0 & 0
\end{bmatrix}
\]

to yield the equivalent matrix $\tilde{P}^T A \tilde{P}$ in Case 2 with a nonzero $(1,4)$-entry. Hence, the matrices of Case 3 are equivalent to the matrices of Cases 1 and 2.

\textbf{Case 4:} $b = c = d = 0$ and $e \neq 0$. As $af = p \neq 0$, it follows that $a \neq 0$ and $f \neq 0$. Hence, we may apply the matrix $P_1 \in \Sp(4)$ given by
\[
P_1 = 
\begin{bmatrix}
0 & \frac{e}{a} & 0 & 0 \\
-\frac{a}{e} & 0 & 0 & \frac{a}{e} \\
0 & 0 & 0 & 1 \\
0 & 1 & -1 & 0 \\
\end{bmatrix}
\]

to yield the equivalent matrix
\[
P_1^T A P_1 =
\begin{bmatrix}
0 & 0 & a & 0 \\
0 & 0 & 0 & -f \\
-a & 0 & 0 & a+f \\
0 & f & -a-f & 0 \\
\end{bmatrix}
\]

with a nonzero $(1,3)$-entry. It follows that the matrices of Case 4 are equivalent to those of Cases 1, 2, and 3.

\textbf{Case 5:} $b = c = d = e = 0$. In this case, as $af = p$, the matrix $A$ is of the form
\[
A =
\begin{bmatrix}
0 & a & 0 & 0 \\
-a & 0 & 0 & 0 \\
0 & 0 & 0 & \frac{p}{a} \\
0 & 0 & -\frac{p}{a} & 0 \\
\end{bmatrix},
\]

with $a \neq \pm \sqrt{p}$. Hence, if $t = \frac{a}{\sqrt{p}}$, we may apply the matrix $P_2 \in \Sp(4)$ given by
\[
P_2 =
\begin{bmatrix}
1 & 0 & 0 & 1 \\
0 & \frac{1}{1-t^2} & \frac{t^2}{1-t^2} & 0 \\
0 & \frac{1}{1-t^2} & \frac{1}{1-t^2} & 0 \\
t^2 & 1 & 1 & 1 \\
\end{bmatrix}
\]

to yield the equivalent matrix
\[
P_2^T A P_2 =
\sqrt{p} \cdot
\begin{bmatrix}
0 & 0 & -t & 0 \\
0 & 0 & 0 & \frac{1}{t} \\
a & 0 & 0 & t + \frac{1}{t} \\
0 & -\frac{1}{t} & -t-\frac{1}{t} & 0 \\
\end{bmatrix},
\]

which has a nonzero $(1,3)$-entry. It follows that all matrices in Case 5 are equivalent to those in Case 1. Therefore, all matrices in $\mathcal{A}_{p,q}^+$ are equivalent, and we may conclude.

\end{proof}

\begin{lemma}

Let $p < 0$ and $q \in \R$. Then the elements of the set
\[\mathcal{A}_{p,q}^- = \{ A \in S(4,\R) \mid \operatorname{Pf}(A) = p, \operatorname{s}(A) = q\}\]

are equivalent under the group action $\rho$.

\end{lemma}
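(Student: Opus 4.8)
The plan is to follow the strategy of the preceding lemma (the $p > 0$ case): reduce an arbitrary element of $\mathcal{A}_{p,q}^-$ to a single standard form by exhibiting a symplectic basis whose set of basis-values depends only on $p$ and $q$, and then invoke the Equivalence Lemma. Write a general element as
\[
A =
\begin{bmatrix}
0 & a & b & c \\
-a & 0 & d & e \\
-b & -d & 0 & f \\
-c & -e & -f & 0
\end{bmatrix},
\qquad
\operatorname{Pf}(A) = af - be + cd = p < 0,
\quad
\operatorname{s}(A) = a + f = q,
\]
and split into cases according to which of $b, c, d, e$ is the first nonzero entry, exactly as before.

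First I would observe that the constructions of Cases 1--4 of the preceding lemma never use the sign of $p$. The explicit symplectic bases of Cases 1 and 2, and the explicit reducing symplectomorphisms of Cases 3 and 4 (the permutation $\tilde{P}$ and the matrix $P_1$), are defined whenever $p \neq 0$; in each case $A$ is carried to the Case 1 (or Case 2) standard form, with basis-values $\left(0, 1, 0, 0, -\tfrac{1}{p}, \tfrac{q}{p}\right)$. Hence every $A \in \mathcal{A}_{p,q}^-$ with at least one of $b, c, d, e$ nonzero is equivalent to this standard form, and all such matrices are mutually equivalent by the Equivalence Lemma.

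The remaining, and principal, case is the block-diagonal Case 5, where $b = c = d = e = 0$, so that $A$ has diagonal blocks $aJ_0$ and $fJ_0$ with $af = p$. Here the argument is in fact simpler than for $p > 0$: since $af = p < 0$, the scalars $a$ and $f$ have opposite signs, so $a \neq f$, and there are no exceptional matrices to exclude (there is no real $\sqrt{p}\,J$). This is precisely why $\mathcal{A}_{p,q}^-$ carries no side condition of the form $A \neq \pm\sqrt{p}\,J$. Conceptually, the genuine conjugation invariants are the eigenvalues of $J^{-1}A$, that is, the roots of $\lambda^2 - q\lambda + p$; for $p < 0$ the discriminant $q^2 - 4p$ is strictly positive, forcing distinct real eigenvalues, and hence a single orbit.

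To dispose of Case 5 concretely I would apply the symplectic rotation
\[
P =
\begin{bmatrix}
\cos\theta\, I_2 & \sin\theta\, I_2 \\
-\sin\theta\, I_2 & \cos\theta\, I_2
\end{bmatrix},
\qquad \theta = \frac{\pi}{4},
\]
where $I_2$ is the $2 \times 2$ identity; one checks directly that $P^T J P = J$, so $P \in \Sp(4)$. A short computation shows that the off-diagonal blocks of $P^T A P$ equal $\tfrac{a - f}{2} J_0$, so its $(1,4)$-entry is $\tfrac{a-f}{2} \neq 0$, while its Pfaffian and sum function remain $p$ and $q$ by Theorem 2.1. Thus $P^T A P$ lies in Case 2, which has already been reduced to the common standard form, and the Equivalence Lemma yields that every element of $\mathcal{A}_{p,q}^-$ is equivalent to it. The main obstacle is isolating and breaking the block-diagonal Case 5; the key point that renders it routine is that $p < 0$ forces $a \neq f$, so a single fixed symplectic rotation suffices, whereas the $p > 0$ case had to avoid the degenerate locus $a = f = \pm\sqrt{p}$.
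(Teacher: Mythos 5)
Your proposal is correct, and its overall architecture is the same as the paper's: reuse Cases 1--4 of the $p>0$ lemma verbatim (they indeed never use the sign of $p$, only $p\neq 0$), and then break the remaining block-diagonal case with an explicit element of $\Sp(4)$ before invoking the Equivalence Lemma. Where you diverge is in how that last case is broken. The paper sets $t=a/\sqrt{-p}$ and applies a rather intricate $t$-dependent matrix $P$ whose conjugate has a nonzero $(1,3)$-entry; you instead apply the single fixed symplectic rotation with blocks $\cos\theta\,I_2$, $\sin\theta\,I_2$ at $\theta=\pi/4$, whose conjugate has off-diagonal blocks $\tfrac{a-f}{2}J_0$ and hence a nonzero $(1,4)$-entry precisely because $af=p<0$ forces $a\neq f$. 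I checked the computation: $P^TJP=J$ holds, the conjugated matrix has $b'=0$ and $c'=\tfrac{a-f}{2}\neq 0$, so it lands in Case 2 with $a'f'+c'd'=af=p\neq 0$, and the Case 2 basis applies. Your version is cleaner in two respects: the reducing matrix is uniform across the whole case rather than depending on the entries of $A$, and the accompanying remark about the roots of $\lambda^2-q\lambda+p$ explains \emph{why} no exceptional locus analogous to $\pm\sqrt{p}\,J$ arises when $p<0$, a point the paper leaves implicit.
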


\begin{proof}

Let $A \in \mathcal{A}_{p,q}^-$ be given as 
\[
A =
\begin{bmatrix}
0 & a & b & c \\
-a & 0 & d & e \\
-b & -d & 0 & f \\
-c & -e & -f & 0 \\
\end{bmatrix}.
\]

By the analysis in the proof of Lemma 4.3, all matrices $A \in \mathcal{A}_{p,q}^-$ with at least one of $b, c, d, e$ nonzero are equivalent. It suffices to consider matrices $A$ with $b = c = d = e = 0$.

In this case, as $af = p$, the matrix $A$ is of the form
\[
A =
\begin{bmatrix}
0 & a & 0 & 0 \\
-a & 0 & 0 & 0 \\
0 & 0 & 0 & \frac{p}{a} \\
0 & 0 & -\frac{p}{a} & 0 \\
\end{bmatrix}.
\]

Hence, if $t = \frac{a}{\sqrt{-p}}$ we may apply the matrix $P \in \Sp(4)$ given by
\[
P =
\begin{bmatrix}
1 & 0 & 0 & t^2+1 \\
0 & \frac{1}{t^2+1} & -\frac{t^2}{(t^2+1)^2} & 0 \\
0 & 1 & \frac{1}{t^2+1} & 0 \\
-\frac{t^2}{t^2+1} & 1 & \frac{1}{t^2+1} & 1 \\
\end{bmatrix}
\]

to yield the equivalent matrix
\[
P^T A P =
\sqrt{-p} \cdot 
\begin{bmatrix}
0 & 0 & -\frac{t}{t^2+1} & 0 \\
0 & 0 & 0 & -t-\frac{1}{t} \\
\frac{t}{t^2+1} & 0 & 0 & t-\frac{1}{t} \\
0 & t+\frac{1}{t} & -t+\frac{1}{t} & 0 \\
\end{bmatrix},
\]

which has a nonzero $(1,3)$-entry. It follows that all matrices in $\mathcal{A}_{p,q}^-$ are equivalent, as desired.

\end{proof}

As a corollary of Theorem 2.1, the orbits $\mathcal{A}_{p,q}^+$ and $\mathcal{A}_{p,q}^-$ are distinct for all $p, q$. Hence, by Lemmas 4.3 and 4.4, we have the Orbit Space Classification of Theorem 2.2.

\section{Geometry of the Orbits in $\R^4$}

In this section, we investigate the shapes of the individual orbits of $S(4,\R) / \Sp(4)$ when considered as subsets of $\R^6$. We shall prove Theorem 2.3.

Let $p \in \R^*$, $q \in \R$. Then the orbits $\mathcal{J}_p^+$ and $\mathcal{J}_p^-$ are singletons. The orbit $\mathcal{A}_{p,q}$, on the other hand, is homeomorphic to the set
\[\{(a, b, c, d, e, f) \in \R^6 \mid af-be+cd = p, a+f = q\}.\]

Let the discriminant $\Delta = \frac{q^2}{4} - p$, and consider the set
\[D_{\Delta} = \{(b,c,d,e) \in \R^4 \mid be - cd \le \Delta \}.\]

For each point $(a,b,c,d,e,f) \in \mathcal{A}_{p,q}$, $(b,c,d,e)$ is constrained to $D_{\Delta}$. Furthermore, given $(b,c,d,e) \in D_{\Delta}$, there are at most two corresponding values of $(a, f)$, namely
\[(a,f) = \left(\frac{q}{2} + \sqrt{\Delta - (be-cd)}, \frac{q}{2} - \sqrt{\Delta - (be-cd)}\right)\]

and
\[(a,f) = \left(\frac{q}{2} - \sqrt{\Delta - (be-cd)}, \frac{q}{2} + \sqrt{\Delta - (be-cd)}\right).\]

If $(b,c,d,e)$ lies on the boundary of $D_{\Delta}$, these two solutions are identical. If $(b,c,d,e)$ lies in the interior of $D_{\Delta}$, these two pairs are distinct. Hence, $\mathcal{A}_{p,q}$ is homeomorphic to the \textit{double} of $D_{\Delta}$: two copies of $D_{\Delta}$ glued along their boundaries.

In order to determine the shapes of the orbits $\mathcal{A}_{p,q}$ for $\Delta > 0$ and $\Delta < 0$, we split into two cases.

\textbf{Case 1:} $\Delta > 0$.

Let $D^2$ denote the closed unit disk. Consider the homeomorphism $f : D^2 \times \R^2 \rightarrow D_{\Delta}$ defined as
\[
f(x,y,z,w) = (x\sqrt{z^2+w^2+\Delta}+z, w+y\sqrt{z^2+w^2+\Delta}, w-y\sqrt{z^2+w^2+\Delta}, x\sqrt{z^2+w^2+\Delta}-z).
\]

It follows that $D_{\Delta}$ is homeomorphic to $D^2 \times \R^2$. Furthermore, the double of $D^2$ is the sphere $\mathbb{S}^2$. Therefore, in this case, $\mathcal{A}_{p,q}$ is homeomorphic to $\mathbb{S}^2 \times \R^2$, as desired.

\textbf{Case 2:} $\Delta < 0$.

Consider the homeomorphism $g : \R^2 \times (D^2 \backslash \{0,0\}) \rightarrow D_{\Delta}$ defined as
\[
g(x,y,z,w) = \left(x + \frac{z\sqrt{x^2+y^2-\Delta}}{z^2+w^2}, \frac{w\sqrt{x^2+y^2-\Delta}}{z^2+w^2}+y, \frac{w\sqrt{x^2+y^2-\Delta}}{z^2+w^2}-y,
x - \frac{z\sqrt{x^2+y^2-\Delta}}{z^2+w^2}\right).
\]

It follows that $D_{\Delta}$ is homeomorphic to $\R^2 \times (D^2 \backslash \{0,0\})$. The double of $D^2 \backslash \{0,0\}$ is homeomorphic to $\R^2 \backslash \{0,0\}$, and it follows that in this case, $\mathcal{A}_{p,q}$ is homeomorphic to $\R^2 \times (\R^2 \backslash \{0,0\})$. The result follows.

Cases 1, and 2 yield Theorem 2.3.

\section{Nonlinear Case in $\R^4$}

\begin{Proposition}
    Given $\Omega_1$ and $\Omega_2$ matrix representations for symplectic form $\omega_1$ and $\omega_2$ respectively, suppose that there is a $\phi \in \Symp(\mathbb{R}^{2n},\omegastd)$ which means that $\phi^*\omega_1=\omega_2$, then would the invariant $s_i,1\le i \le n$ match up for matrices $\Omega_1(\phi(x))$ and $\Omega_2(x)$.
\end{Proposition}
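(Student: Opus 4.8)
The plan is to reduce this nonlinear statement to a pointwise application of the linear invariance result of Theorem 2.8, using the linearization of $\phi$ as the bridge. Writing $P(x) = d\phi_x$ for the Jacobian of $\phi$ at a point $x \in \R^{2n}$, I would translate the hypothesis $\phi^*\omega_1 = \omega_2$ into a family of pointwise matrix congruences, and separately show that each $P(x)$ lies in $\Sp(2n)$. Once both facts are in hand, Theorem 2.8 applies verbatim at every point.

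First I would unwind the pullback condition $\phi^*\omega_1 = \omega_2$ using the definition of the pullback of a $2$-form. For all $v, w \in \R^{2n}$,
\[
v^T \Omega_2(x)\, w = \omega_2(x)(v,w) = (\phi^*\omega_1)(x)(v,w) = \omega_1(\phi(x))(P(x)v,\, P(x)w) = v^T P(x)^T \Omega_1(\phi(x)) P(x)\, w.
\]
Since this holds for all $v, w$, comparing the associated bilinear forms exactly as in the proof of Lemma 4.2 yields the matrix identity
\[
\Omega_2(x) = P(x)^T\, \Omega_1(\phi(x))\, P(x)
\]
for every $x \in \R^{2n}$. This exhibits $\Omega_2(x)$ as the image of $\Omega_1(\phi(x))$ under the linear action $\rho(P(x), \cdot)$, provided $P(x) \in \Sp(2n)$.

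Second, I would establish precisely that: $P(x) \in \Sp(2n)$ for every $x$. This is where the hypothesis $\phi \in \Symp(\R^{2n}, \omegastd)$ enters, together with the crucial fact that the standard form has the \emph{constant} matrix representation $J$ at every point, so that $\omegastd(\phi(x)) = J$. Applying the same pullback computation to $\omegastd$ (whose pullback under $\phi$ is again $\omegastd$), I obtain for all $v, w$,
\[
v^T J w = \omegastd(x)(v,w) = \omegastd(\phi(x))(P(x)v,\, P(x)w) = v^T P(x)^T J P(x)\, w,
\]
and hence $P(x)^T J P(x) = J$, i.e. $P(x) \in \Sp(2n)$.

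With these two ingredients the conclusion is immediate: fixing any $x$ and any $1 \le i \le n$, Theorem 2.8 gives
\[
s_i(\Omega_2(x)) = s_i\bigl(P(x)^T \Omega_1(\phi(x)) P(x)\bigr) = s_i(\Omega_1(\phi(x))),
\]
so the invariants $s_i$ of $\Omega_1(\phi(x))$ and $\Omega_2(x)$ agree at every point. The argument is genuinely pointwise and uses no regularity of $\phi$ beyond differentiability, so smoothness plays no role once the Jacobian is known to be symplectic. The only real subtlety to watch — the main obstacle — is the bookkeeping in the pullback: one must keep the base-point evaluation $\Omega_1(\phi(x))$ (the form sampled at $\phi(x)$) cleanly separated from the linear action of the Jacobian $P(x)$, and the reduction of the symplectomorphism condition to the single pointwise identity $P(x) \in \Sp(2n)$ rests essentially on $\omegastd$ being constant.
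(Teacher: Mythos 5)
Your proposal is correct and follows essentially the same route as the paper: translate $\phi^*\omega_1=\omega_2$ into the pointwise congruence $\Omega_2(x)=P(x)^T\,\Omega_1(\phi(x))\,P(x)$ with $P(x)=d\phi_x$ and then invoke the linear invariance theorem. You are in fact more careful than the paper, which asserts the congruence and the equivalence under $\rho$ without explicitly checking that $d\phi_x\in\Sp(2n)$ (your second step, using $\phi^*\omegastd=\omegastd$ and the constancy of $J$) — a detail worth keeping.
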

\begin{proof}
    By the equation $\phi^*\omega_1=\omega_2$, we get to know that $P^T \Omega_1(\varphi(x)) P=\Omega_2(x)$, where $P = d\varphi_x$ denotes the linearization of $\varphi$ at $x$. Then we can conclude that $\Omega_1(\varphi(x))$ and $\Omega_2(x)$ are equivalent under the group action $rho$. Consequently by theorem 2.1, all the invariant will match up for two matrices $\Omega_1(\phi(x))$ and $\Omega_2(x)$. 
\end{proof}

In order to prove the global invariants of Theorem 2.6, we must restrict our attention to the symplectic forms
\[\omega = f_1(x_1, y_1) \mathrm{d} x_1 \wedge \mathrm{d} y_1 + f_2(x_2, y_2) \mathrm{d} x_2 \wedge \mathrm{d} y_2\]

and
\[\omega' = g_1(x_1, y_1) \mathrm{d} x_1 \wedge \mathrm{d} y_1 + g_2(x_2, y_2) \mathrm{d} x_2 \wedge \mathrm{d} y_2,\]

where $f_1, f_2, g_1$, and $g_2$ are nowhere vanishing.

We will show that if $\omega$ and $\omega'$ are $\omegastd$-symplectomorphic, then
\[\inf\{m_{\omega}(x) \mid x \in \R^4\}=\inf\{m_{\omega'}(x) \mid x \in \R^4\}.\]

The three other global invariants are handled similarly.

We will need the following lemma, a corollary of Theorem 2.4.

\begin{lemma}

Consider the symplectic forms
\[\omega_1 = f_1(x_1, y_1) \mathrm{d} x_1 \wedge \mathrm{d} y_1 + f_2(x_2, y_2) \mathrm{d} x_2 \wedge \mathrm{d} y_2\]

and
\[\omega_2 = g_1(x_1, y_1) \mathrm{d} x_1 \wedge \mathrm{d} y_1 + g_2(x_2, y_2) \mathrm{d} x_2 \wedge \mathrm{d} y_2,\]

where $f_1, f_2, g_1,$ and $g_2$ are nowhere vanishing. Let $\varphi \in \Symp(\R^4, \omegastd)$. If $\varphi^* \omega_1 = \omega_2$, then for each $x \in \R^4$, either:

\begin{itemize}

    \item $f_1(\varphi^1, \varphi^2) = g_1(x_1, y_1)$ and $f_2(\varphi^3, \varphi^4) = g_2(x_2, y_2)$, or
    
    \item $f_1(\varphi^1, \varphi^2) = g_2(x_2, y_2)$ and $f_2(\varphi^3, \varphi^4) = g_1(x_1, y_1)$.
    
\end{itemize}

(Here $\varphi^i : \R^4 \rightarrow \R$ are the component functions of $\varphi = (\varphi^1, \varphi^2, \varphi^3, \varphi^4)$.)

\end{lemma}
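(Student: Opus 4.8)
The plan is to reduce this statement about the two nonvanishing functions to the pointwise linear invariants already established in Theorem 2.4 (equivalently, the Pfaffian and sum function of Theorem 2.1). At each fixed point $x \in \R^4$, the matrix representations $\Omega_1(\varphi(x))$ and $\Omega_2(x)$ are related by $(d\varphi_x)^T \Omega_1(\varphi(x)) (d\varphi_x) = \Omega_2(x)$, since $\varphi$ is a symplectomorphism and $\varphi^*\omega_1 = \omega_2$; moreover $d\varphi_x \in \Sp(4)$ because $\varphi$ preserves $\omegastd$. Hence $\Omega_1(\varphi(x))$ and $\Omega_2(x)$ lie in the same $\rho$-orbit, so their Pfaffians and sum functions agree at every $x$.

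Next I would compute these two invariants explicitly for the special block-diagonal form of $\omega_1$ and $\omega_2$. For the matrix representation of $\omega_1$ at the point $\varphi(x)$, the only nonzero entries are the $(1,2)$ and $(3,4)$ blocks carrying $f_1(\varphi^1,\varphi^2)$ and $f_2(\varphi^3,\varphi^4)$; similarly $\Omega_2(x)$ carries $g_1(x_1,y_1)$ and $g_2(x_2,y_2)$. From the definitions I would read off
\[
\operatorname{Pf}(\Omega_1(\varphi(x))) = f_1(\varphi^1,\varphi^2)\, f_2(\varphi^3,\varphi^4), \qquad
\operatorname{s}(\Omega_1(\varphi(x))) = f_1(\varphi^1,\varphi^2) + f_2(\varphi^3,\varphi^4),
\]
and the analogous pair of formulas in $g_1, g_2$ for $\Omega_2(x)$. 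The invariance established in the previous paragraph then forces these products and sums to coincide.

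The heart of the argument is the observation that a pair of real numbers is determined up to order by its sum and product. Concretely, $f_1(\varphi^1,\varphi^2)$ and $f_2(\varphi^3,\varphi^4)$ are the two roots of $t^2 - \operatorname{s}\,t + \operatorname{Pf}$, and the matching of the invariants makes this the same quadratic as the one whose roots are $g_1(x_1,y_1)$ and $g_2(x_2,y_2)$. Two monic quadratics with equal coefficients have the same (multi)set of roots, so at the fixed point $x$ we get the unordered equality of pairs
\[
\{f_1(\varphi^1,\varphi^2), f_2(\varphi^3,\varphi^4)\} = \{g_1(x_1,y_1), g_2(x_2,y_2)\},
\]
which is precisely the dichotomy in the two bullet points.

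The main obstacle, and the only subtle point, is bookkeeping the ordered-versus-unordered distinction: the two linear invariants are symmetric functions of the block entries, so they can only recover the pair up to a swap, which is exactly why the conclusion is stated as a two-way alternative rather than a single identity. I would also remark that nonvanishing of the $f_i, g_i$ guarantees the Pfaffian is nonzero at each point, so the relevant $\Omega$'s genuinely lie in $S(4,\R)$ and Theorem 2.4 applies; no further nondegeneracy check is needed. Summing over all $x \in \R^4$ is not required here, since the claim is purely pointwise.
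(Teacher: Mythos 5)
Your proposal is correct and follows essentially the same route as the paper: invoke Theorem 2.4 to match the Pfaffian and sum function of $\Omega_1(\varphi(x))$ and $\Omega_2(x)$ pointwise, read these off as the product and sum of the block entries, and conclude the unordered equality of pairs. You actually spell out the final ``sum and product determine the pair up to order'' step, which the paper leaves implicit behind ``The result follows.''
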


\begin{proof}

By Theorem 2.4, the Pfaffian and sum functions of the matrices
\[\Omega_1(\varphi(x)) =
\begin{bmatrix}
0 & f_1(\varphi^1(x), \varphi^2(x)) & 0 & 0 \\
-f_1(\varphi^1(x), \varphi^2(x)) & 0 & 0 & 0 \\ 
0 & 0 & 0 & f_2(\varphi^3(x), \varphi^4(x)) \\
0 & 0 & -f_2(\varphi^3(x), \varphi^4(x)) & 0
\end{bmatrix}
\]

and
\[\Omega_2(x) =
\begin{bmatrix}
0 & g_1(x_1, y_1) & 0 & 0 \\
-g_1(x_1, y_1) & 0 & 0 & 0 \\ 
0 & 0 & 0 & g_2(x_2, y_2) \\
0 & 0 & -g_2(x_2, y_2) & 0
\end{bmatrix}
\]

must match at every point $x \in \R^4$.

Hence, for each $x \in \R^4$,
\[f_1(\varphi^1(x), \varphi^2(x))f_2(\varphi^3(x), \varphi^4(x)) = g_1(x_1, y_1)g_2(x_2, y_2)\]
and
\[f_1(\varphi^1(x), \varphi^2(x)) + f_2(\varphi^3(x), \varphi^4(x)) = g_1(x_1, y_1) + g_2(x_2, y_2).\]

The result follows.

\end{proof}

By Lemma 6.1, $\{f_1(\varphi^1(x), \varphi^2(x)), f_2(\varphi^3(x), \varphi^4(x))\} = \{g_1(x_1, y_1), g_2(x_2, y_2)\}$ for all $x \in \R^4$, and hence, $m_{\omega}(\varphi(x)) = m_{\omega'}(x)$ for all $x \in \R^4$. As $\varphi$ is a bijection, it follows that the sets $\{m_{\omega}(x) \mid x \in \R^4\}$ and $\{m_{\omega'}(x) \mid x \in \R^4\}$ are equal. Therefore,
\[\inf\{m_{\omega}(x) \mid x \in \R^4\}=\inf\{m_{\omega'}(x) \mid x \in \R^4\},\]

and Theorem 2.5 follows.

Lastly, we consider the following corollary of the Curry-Pelayo-Tang Stability Theorem \cite[Main Theorem]{Curry2018}.

\begin{theorem}[{\cite[Example 4.1]{Curry2018}}]

Consider $\R^4$ with coordinates $(x_1, y_1, x_2, y_2)$. Let $U$ be an open subset of $\R^4$, and let $f_1, f_2, f_3, f_4 \in C^{\infty}(U)$. Note that
\[
\omega = f_1 \mathrm{d} x_1 \wedge \mathrm{d} y_1 + f_2 \mathrm{d} x_2 \wedge \mathrm{d} y_2
\]
is a symplectic form if and only if each of the $f_i$ is nowhere vanishing and depends only on the coordinates $x_i, y_i$.

Define the symplectic forms
\[
\omega_t = f_1(t,x_1,y_1) \mathrm{d} x_1 \wedge \mathrm{d} y_1 + f_2(t,x_2,y_2) \mathrm{d} x_2 \wedge \mathrm{d} y_2, t \in [0,1],
\]
and suppose that the $f_i$ are bounded away from zero with bounded time derivative. Then there exists a smooth path of $\varphi_t \in \Diff(\R^4)$, $t \in [0,1]$, such that $\varphi_t^* \omega_t = \omega_0$.
  
\end{theorem}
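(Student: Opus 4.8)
The plan is to prove the two assertions separately: first the pointwise characterization of when $\omega$ is symplectic, and then the stability statement via Moser's method. For the characterization I would simply compute. Expanding $\mathrm{d}\omega = \mathrm{d} f_1 \wedge \mathrm{d} x_1\wedge\mathrm{d} y_1 + \mathrm{d} f_2 \wedge \mathrm{d} x_2\wedge\mathrm{d} y_2$ in the coordinate basis, every term containing a repeated differential vanishes, so $\mathrm{d}\omega = 0$ forces the mixed partials $\partial f_1/\partial x_2,\ \partial f_1/\partial y_2,\ \partial f_2/\partial x_1,\ \partial f_2/\partial y_1$ to vanish identically; that is, closedness is equivalent to each $f_i$ depending only on $(x_i,y_i)$. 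For nondegeneracy, the matrix representation $\Omega$ is block-diagonal with $2\times 2$ blocks of determinant $f_1^2$ and $f_2^2$, so $\det\Omega = f_1^2 f_2^2$, which is nonzero everywhere if and only if each $f_i$ is nowhere vanishing. Together these yield the stated equivalence.

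For the stability statement I would run the Moser trick. I seek $\varphi_t$ as the flow from time $0$ to time $t$ of a time-dependent vector field $X_t$, normalized by $\varphi_0 = \mathrm{id}$. Using the standard identity $\frac{d}{dt}(\varphi_t^*\omega_t) = \varphi_t^*\big(\mathcal{L}_{X_t}\omega_t + \dot\omega_t\big)$ together with Cartan's formula $\mathcal{L}_{X_t}\omega_t = \mathrm{d}(\iota_{X_t}\omega_t)$, which is valid since each $\omega_t$ is closed, it suffices to produce $X_t$ satisfying $\mathrm{d}(\iota_{X_t}\omega_t) = -\dot\omega_t$. The time derivative $\dot\omega_t = \dot f_1\,\mathrm{d} x_1\wedge\mathrm{d} y_1 + \dot f_2\,\mathrm{d} x_2\wedge\mathrm{d} y_2$ is closed, and because of the split structure each summand lives in its own coordinate pair. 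I would therefore build a split primitive $\mu_t = \mu_t^{(1)} + \mu_t^{(2)}$ with $\mathrm{d}\mu_t = -\dot\omega_t$ by setting, for instance, $\mu_t^{(1)} = -\big(\int_0^{x_1}\dot f_1(t,s,y_1)\,\mathrm{d}s\big)\mathrm{d} y_1$ and analogously for $\mu_t^{(2)}$, and then define $X_t$ through $\iota_{X_t}\omega_t = \mu_t$, which is possible and unique because each $\omega_t$ is nondegenerate.

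The main obstacle, and the reason the boundedness hypotheses are imposed, is completeness of $X_t$ on the noncompact space $\R^4$; this is exactly what guarantees the flow $\varphi_t$ exists for the full interval $t\in[0,1]$ and distinguishes the argument from the compact case. Here the two assumptions enter decisively. Since each $f_i$ is bounded away from zero, solving $\iota_{X_t}\omega_t = \mu_t$ costs only a bounded factor of $1/f_i$; and since the time derivatives $\dot f_i$ are bounded, the primitive $\mu_t$ above grows at most linearly in the coordinates. Consequently $X_t$ has at most linear growth in the base point, uniformly in $t\in[0,1]$. A vector field of at most linear growth is complete by a Gr\"onwall estimate, so its flow is defined on all of $[0,1]$ and depends smoothly on $t$.

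With completeness established, $\frac{d}{dt}(\varphi_t^*\omega_t) = 0$ holds by construction, and integrating from $0$ to $t$ gives $\varphi_t^*\omega_t = \varphi_0^*\omega_0 = \omega_0$, as desired. I expect the only delicate points to be the careful verification of the linear-growth bound, tracking the constants through the division by $f_i$, and confirming the joint smoothness of $X_t$ in $(t,z)$; the algebra of the Moser identity itself is routine once the primitive is in hand.
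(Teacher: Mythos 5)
The paper does not actually prove this statement --- it imports it verbatim from \cite{Curry2018} as a quoted corollary of the Curry--Pelayo--Tang Stability Theorem --- but your argument (explicit check of closedness and nondegeneracy for the split form, then the Moser trick with a split primitive for $-\dot\omega_t$, with the bounded-away-from-zero and bounded-time-derivative hypotheses used exactly to get a uniformly linearly growing Moser vector field and hence completeness of its flow via Gr\"onwall) is precisely the standard proof of that cited result and appears correct. No gaps.
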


As mentioned in Section 2.2, Theorem 6.2 motivates the following question.

\begin{question}
    Under the conditions of Theorem 6.1, does there exist a smooth path of $\varphi_t \in \Symp(\R^4, \omegastd)$, $t \in [0, 1]$, such that $\varphi_t^* \omega_t = \omega_0$?
\end{question}

As we shall see in the Example below, this is not necessarily the case. We use Lemma 6.1 to provide an infinite collection of smooth families of symplectic forms $\omega_t$, $t \in [0,1]$, which are diffeomorphic but \textit{not} $\omegastd$-symplectomorphic.

\begin{example}

Fix $t_1, t_2 \in [0,1]$, and assume without loss of generality that $t_1 > t_2$. Let $g : \R^2 \rightarrow \R$ be a function satisfying $\inf \{g(x,y) \mid x, y \in \R\} = c > 0$. Let $f_1(t, x_1, y_1) = g(x_1, y_1) + t$ and let $f_2(t, x_2, y_2) = g(x_2, y_2) + t$. Consider the family of symplectic forms 
\[\omega_t = f_1(t,x_1,y_1) \mathrm{d} x_1 \wedge \mathrm{d} y_1 + f_2(t,x_2,y_2) \mathrm{d} x_2 \wedge \mathrm{d} y_2, t \in [0,1].\]

Then $\omega_{t_1}$ and $\omega_{t_2}$ are diffeomorphic but not $\omegastd$-symplectomorphic.

To see that $\omega_{t_1}$ and $\omega_{t_2}$ are diffeomorphic, apply Theorem 6.2. To see that $\omega_{t_1}$ and $\omega_{t_2}$ are not $\omegastd$-symplectomorphic, note that by Lemma 6.1, if $\varphi^* \omega_{t_1} = \omega_{t_2}$ for some $\varphi \in \Symp(\R^4,\omegastd)$, then for each $x \in \R^4$, either $f_1(t_1, \varphi^1(x), \varphi^2(x)) = f_1(t_2, x_1, y_1)$ or $f_1(t_1, \varphi^1(x), \varphi^2(x)) = f_2(t_2, x_2, y_2)$. Select $x = (u, v, u, v)$ such that $g(u, v) < c + (t_1 - t_2)$. In either of the two cases, we find $g(\varphi^1(x), \varphi^2(x)) + (t_1 - t_2) = g(u, v)$, which is a contradiction to $g(u,v) < c + (t_2 - t_1)$. It follows that $\omega_{t_1}$ and $\omega_{t_2}$ are not $\omegastd$-symplectomorphic, and therefore, the symplectic forms $\omega_t$, $t \in [0,1]$ are diffeomorphic but not $\omegastd$-symplectomorphic.

In particular, consider $g(x,y) = x^2 + y^2 + 1$, for which
\[\omega_t = (x_1^2+y_1^2+t+1)\mathrm{d} x_1 \wedge \mathrm{d} y_1 + (x_2^2+y_2^2+t+1)\mathrm{d} x_2 \wedge \mathrm{d} y_2, t \in [0,1]\]

From the above, it follows that the symplectic forms $\omega_t$, $t \in [0,1]$ are diffeomorphic but not $\omegastd$-symplectomorphic.

\end{example}

\section{Higher Dimensional Linear Case}

\subsection{Equivalence Lemma for $\R^{2n}$}

In this section, we state the Equivalence Lemma in its full generality. First, we must extend the notion of a symplectic basis as introduced in Section 4.1.

\begin{definition}[Symplectic basis]

Let $\omega$ be a linear symplectic form on $\R^{2n}$, and let $\mathcal{B} = \{v_1, v_2, \ldots, v_{2n}\}$ be a basis for $(\R^{2n}, \omega)$. Then $\mathcal{B}$ is said to be a \textit{symplectic basis for $(\R^{2n}, \omega)$} if the following equalities hold:

\begin{itemize}

    \item $\omega(v_{2i-1}, v_{2i}) = 1$ for $1\le i\le n$; and,
    
    \item $\omega(v_i, v_j) = 0$ for $i < j$ and $(i, j) \neq (2k-1, 2k)$ for all $1\le k\le n$.

\end{itemize}

\end{definition}

As in Theorem 4.1, the Modified Gram-Schmidt Algorithm may be used to prove the the following result \cite[Theorem 2.10.4]{FirstSteps}.

\begin{theorem}

Every symplectic vector space $(\R^{2n}, \omega)$ has a symplectic basis.

\end{theorem}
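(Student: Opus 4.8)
The plan is to prove this by induction on $n$ via the symplectic Gram--Schmidt process, splitting off a single hyperbolic pair at each stage and passing to its symplectic complement. The base case $n=1$ is immediate: given a nonzero $v_1 \in \R^2$, nondegeneracy of $\omega$ furnishes some $w$ with $\omega(v_1,w)\neq 0$, and setting $v_2 = w/\omega(v_1,w)$ yields $\omega(v_1,v_2)=1$. The pair $\{v_1,v_2\}$ is automatically linearly independent (a dependence would force $\omega(v_1,v_2)=0$), hence a symplectic basis for $(\R^2,\omega)$.

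For the inductive step on $(\R^{2n},\omega)$, I would first extract a hyperbolic pair exactly as above: choose $v_1 \neq 0$, then use nondegeneracy to find $v_2$ with $\omega(v_1,v_2)=1$. Let $P=\operatorname{span}\{v_1,v_2\}$ and define its symplectic complement
\[ W = \{\, u \in \R^{2n} \ssep \omega(v_1,u)=\omega(v_2,u)=0 \,\}. \]
The heart of the argument is to show that $\R^{2n} = P \oplus W$ with $\dim W = 2n-2$, and that the restriction $\omega|_W$ is again nondegenerate. The direct-sum decomposition I would make explicit: for arbitrary $u$, the vector $u - \bigl(-\omega(v_2,u)\,v_1 + \omega(v_1,u)\,v_2\bigr)$ lies in $W$ by a one-line check using $\omega(v_1,v_2)=1$, which gives $P+W=\R^{2n}$; and $P\cap W=\{0\}$ because any element $\gamma v_1 + \delta v_2 \in W$ has $\delta = \omega(v_1,\gamma v_1+\delta v_2)=0$ and $\gamma = -\omega(v_2,\gamma v_1 + \delta v_2)=0$. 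Nondegeneracy on $W$ then descends from nondegeneracy on $\R^{2n}$: if $u\in W$ is $\omega$-orthogonal to all of $W$, then since $u$ is also $\omega$-orthogonal to $v_1$ and $v_2$ and $\R^{2n}=P\oplus W$, it is $\omega$-orthogonal to everything, whence $u=0$.

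Finally, I would identify $W$ with $\R^{2(n-1)}$ through any linear isomorphism, transporting $\omega|_W$ to a linear symplectic form there, and apply the inductive hypothesis to obtain a symplectic basis $\{v_3,\dots,v_{2n}\}$ of $W$. Concatenating with $v_1,v_2$ produces $\{v_1,\dots,v_{2n}\}$; the defining equalities of Definition 7.1 hold automatically, since the pairings within $P$ and within $W$ are correct by construction and by induction, while every cross-pairing $\omega(v_i,v_j)$ with $i\in\{1,2\}$ and $j\ge 3$ vanishes because $v_j\in W$.

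I expect the main obstacle to be the splitting lemma itself---verifying $\R^{2n}=P\oplus W$ and the descent of nondegeneracy to $\omega|_W$---since everything else is bookkeeping. A secondary point worth stating carefully is that the theorem must be phrased for abstract symplectic vector spaces (or the induction carried on dimension) so that the inductive hypothesis legitimately applies to $W$, which is handed to us only as a $(2n-2)$-dimensional subspace rather than as $\R^{2(n-1)}$ on the nose.
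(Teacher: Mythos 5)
Your proof is correct and is precisely the symplectic Gram--Schmidt argument that the paper invokes without writing out (it simply cites the Modified Gram--Schmidt Algorithm and \cite[Theorem 2.10.4]{FirstSteps}): split off a hyperbolic pair, verify the direct-sum decomposition with the symplectic complement and the descent of nondegeneracy, and induct. The caveat you raise about phrasing the induction for abstract symplectic vector spaces (or on dimension) is the right one to flag, and your handling of it is fine.
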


We continue with a generalization of Lemma 4.2.

\begin{lemma}

Let $\{v_i\}_{i=1}^{2n}$ and $\{w_i\}_{i=1}^{2n}$ be two bases for $\R^{2n}$. Define the linear transformation $P: \R^{2n}\rightarrow \R^{2n}$ such that $Pv_i = w_i$ for all $1\le i\le 2n$. Then $P^T J P = J$ if and only if $\omegalstd(v_i, v_j) = \omegalstd(w_i, w_j)$ for all $1\le i\le j\le 2n$.

\end{lemma}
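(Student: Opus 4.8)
The plan is to mirror the proof of Lemma 4.2 exactly, since the argument never used the specific dimension four. The statement is a biconditional, so I would prove the two directions separately, and in both directions the essential move is to translate between the symplectic-form values $\omegalstd(v_i, v_j)$ and the bilinear expressions $v_i^T J v_j$, using the definition $\omegalstd(v,w) = v^T J w$.

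For the forward direction, I would assume $P^T J P = J$ and simply compute, for each pair $i, j$,
\[
\omegalstd(w_i, w_j) = w_i^T J w_j = (Pv_i)^T J (Pv_j) = v_i^T (P^T J P) v_j = v_i^T J v_j = \omegalstd(v_i, v_j).
\]
This holds for all $1 \le i, j \le 2n$, in particular for $i \le j$, so that direction is immediate and requires no appeal to the basis hypothesis.

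For the reverse direction, I would assume the equalities $\omegalstd(v_i, v_j) = \omegalstd(w_i, w_j)$ for all $1 \le i \le j \le 2n$. The first step is to observe that these extend to \emph{all} index pairs: the diagonal case $i = j$ is trivial since $\omegalstd$ is alternating, and the case $i > j$ follows by skew-symmetry from the case $j < i$. Having the full family of equalities, I would rewrite each as $v_i^T J v_j = (Pv_i)^T J (Pv_j) = v_i^T (P^T J P) v_j$. Since $\{v_i\}_{i=1}^{2n}$ is a basis for $\R^{2n}$, bilinearity then upgrades this to $x^T J y = x^T (P^T J P) y$ for all $x, y \in \R^{2n}$. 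Evaluating at standard basis vectors $x = e_i$, $y = e_j$ gives $J_{i,j} = (P^T J P)_{i,j}$ for every entry, whence $J = P^T J P$.

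There is no genuine obstacle here; the lemma is a routine verbatim generalization of Lemma 4.2 from $\R^4$ to $\R^{2n}$, and the only point meriting a sentence of care is the extension of the hypothesis from the index range $i \le j$ to the full range via the alternating property of $\omegalstd$, which is what justifies working with the complete bilinear form rather than just its upper-triangular entries.
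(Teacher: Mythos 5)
Your proof is correct and follows essentially the same route as the paper, which proves the forward direction by the identical computation and simply defers the reverse direction to the argument already given for Lemma 4.2 (rewrite the hypothesis as $v_i^T J v_j = v_i^T(P^TJP)v_j$, extend by bilinearity over the basis, and compare entries against standard basis vectors). If anything you are slightly more careful than the paper: you explicitly note that the hypothesis is only stated for $i \le j$ and must be extended to all index pairs via skew-symmetry, a point the paper's own write-up passes over.
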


\begin{proof}

($\rightarrow$) If $P^T J P = J$, then 
\[w_i^T J w_j = (Pv_i)^T J (Pv_j) = v_i^T (P^T J P) v_j = v_i^T J v_j,\]
as desired.

($\leftarrow$) Follows from Definition 2.5.

\end{proof}

\begin{definition}[Basis-values]

Given a symplectic basis $\mathcal{B} = \{v_1, v_2, \ldots, v_{2n}\}$ for the linear symplectic form $A$ on $\R^{2n}$, the \textit{set of basis-values of $\mathcal{B}$} is the ordered ${n}\choose{2}$-tuple \[(\omegalstd(v_1, v_2), \omegalstd(v_1, v_3), \omegalstd(v_1, v_4),\ldots, \omegalstd(v_{2n-1}, v_{2n})).\]

\end{definition}

We are ready to state the Equivalence Lemma in $\R^{2n}$.

\begin{corollary}[Equivalence Lemma in $\R^{2n}$]

Let $A, B \in S(2n,\R)$. Let $\mathcal{B}_1 = \{v_1, v_2, \ldots, v_{2n}\}$ be a symplectic basis for $A$ and let $\mathcal{B}_2 = \{w_1, w_2, \ldots, w_{2n}\}$ be a symplectic basis for $B$. If the set of basis-values of $\mathcal{B}_1$ is equal to the set of basis-values of $\mathcal{B}_2$, then $A$ and $B$ are equivalent under the group action $\rho$.

\end{corollary}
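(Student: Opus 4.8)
The plan is to mirror the argument for the four-dimensional Equivalence Lemma (Corollary 4.1) essentially verbatim, replacing the appeal to Lemma 4.2 with its higher-dimensional analogue Lemma 7.3. First I would define the linear transformation $P : \R^{2n} \rightarrow \R^{2n}$ by $P v_i = w_i$ for all $1 \le i \le 2n$; this is well-defined and invertible since $\mathcal{B}_1$ is a basis. The hypothesis that the basis-values of $\mathcal{B}_1$ and $\mathcal{B}_2$ agree says precisely that $\omegalstd(v_i, v_j) = \omegalstd(w_i, w_j)$ for all $i < j$, and hence for all pairs $(i,j)$ by skew-symmetry and the vanishing of the diagonal. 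Applying Lemma 7.3 then yields $P^T J P = J$, so that $P \in \Sp(2n)$.

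Next I would invoke the fact that $\mathcal{B}_1$ and $\mathcal{B}_2$ are symplectic bases for $A$ and $B$, respectively. By Definition 7.1, any symplectic basis realizes the same prescribed pattern of values, so both $v_i^T A v_j$ and $w_i^T B w_j$ equal the identical standard entry $J_{i,j}$ for every pair $(i,j)$; consequently $v_i^T A v_j = w_i^T B w_j$ for all $1 \le i, j \le 2n$. Substituting $w_i = P v_i$ then gives $v_i^T A v_j = (P v_i)^T B (P v_j) = v_i^T (P^T B P) v_j$ for all $i, j$.

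Finally, since $\{v_1, \ldots, v_{2n}\}$ is a basis for $\R^{2n}$, the bilinear forms determined by $A$ and $P^T B P$ agree on a spanning set and therefore agree identically. Testing against the standard basis vectors $x = e_i$ and $y = e_j$ yields $A_{i,j} = e_i^T A e_j = e_i^T (P^T B P) e_j = (P^T B P)_{i,j}$ for all $i, j$, so $A = P^T B P$. As $P \in \Sp(2n)$, this exhibits $A$ and $B$ as equivalent under the group action $\rho$, completing the argument.

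I do not anticipate a genuine obstacle, as the proof is a direct transcription of the $\R^4$ case; the only point requiring mild care is the justification that the two symplectic bases produce matching values $v_i^T A v_j = w_i^T B w_j$. This follows immediately from Definition 7.1, which imposes the same canonical pattern of $1$'s and $0$'s on any symplectic basis, independently of the underlying symplectic form. I would take care to write $v_i^T A v_j$ for $\omega(v_i, v_j)$ via the matrix representation, consistent with the paper's conventions.
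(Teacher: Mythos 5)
Your proposal is correct and follows essentially the same route as the paper's own proof: define $P$ by $Pv_i = w_i$, use the agreement of basis-values together with the higher-dimensional lemma (Lemma 7.2 in the paper's numbering) to conclude $P \in \Sp(2n)$, observe that both symplectic bases force $v_i^T A v_j = w_i^T B w_j = J_{i,j}$, and then pass from agreement of the bilinear forms on the basis $\{v_i\}$ to the matrix identity $A = P^T B P$. Your added remark justifying $v_i^T A v_j = w_i^T B w_j$ via the canonical pattern imposed by the definition of a symplectic basis is a sound elaboration of a step the paper leaves implicit.
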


\begin{proof}

Let $P : \R^{2n} \rightarrow \R^{2n}$ be the linear transformation such that $Pv_i = w_i$ for $1\le i\le 2n$. As the set of basis-values of $\mathcal{B}_1$ is equal to the set of basis-values for $\mathcal{B}_2$, by Lemma 7.2, it follows that $P^T J P = J$.

As $\mathcal{B}_1$ and $\mathcal{B}_2$ are symplectic bases for $A$ and $B$, respectively, $v_i^T A v_j = w_i^T B w_j$ for all $1\le i,j \le 2n$. Hence, $v_i^T A v_j = (Pv_i)^T B (Pv_j)^T$ for all $1\le i,j \le 2n$, which implies that $v_i^T A v_j = v_i^T (P^T B P) v_j$ for all $1\le i,j \le 2n$. 

As $\{v_1, v_2, \ldots, v_{2n}\}$ is a basis for $\R^{2n}$, it follows that $x^T A y = x^T (P^T B P) y$ for all $x, y \in \R^{2n}$. In particular, for the standard basis vectors $x = e_i$ and $y = e_j$, we have $A_{i,j} = e_i^T A e_j = e_i^T (P^T B P) e_j = (P^T B P)_{i,j}$. It follows that all of the entries of $A$ and $P^T B P$ are equal, so that $A = P^T B P$. The result follows.

\end{proof}

\subsection{Proof of Invariants $s_k$ in $\R^{2n}$}

\begin{definition}
   In this section, we derive the invariants $s_k$ Theorem 2.8. The formal definition of $s_k$ is stated as below:

Let $A \in S(2n,\mathbb{R})$, and let $0\le k\le n-1$. For a subset $S \subset \{1,2,...,n\}$ with $|S| = k$, define the $(2n-2k) \times (2n-2k)$ matrix $A_{S}$ as the submatrix of $A$ formed by removing rows and columns $2i-1$ and rows and columns $2i$ for all $i \in S$.Let $s_k(A)$ denote the sum
\[
\sum_{\substack{S \subseteq \{1, 2, \ldots, n\} \\ |S|=k}} \operatorname{Pf}(A_{S}).
\]
Then for any $P \in \Sp(2n)$, $s_k(A) = s_k(P^T A P)$. In other words, $s_k$ is invariant under the group action $\rho$.
\end{definition}
At the same time, we can notice that the pfaffian and sum function we discussed in the section 3 of this paper are exactly $s_0$ and $s_{n-1}$ here respectively, which indicates that our result here is a generalized version of the section 3 proof. 
\begin{definition}[Pfaffian of a $2n \times 2n$ matrix]

The \textit{Pfaffian} of a $2n \times 2n$ matrix $A$ is the quantity
\[\operatorname{Pf}(A) = \sum_{\sigma} \text{sgn}(\sigma) \prod_{i=1}^{n} A_{\sigma(2i-1), \sigma(2i)},\]

where $\text{sgn}(\sigma)$ denotes the signature of $\sigma$ and the summation is taken over all permutations $\sigma$ of $\{1, 2, \ldots, 2n\}$. 
(Here, $A_{i, j}$ denotes the $(i, j)$ entry of $A$.)

\end{definition}

According to the above permutation definition of pfaffian, we can expand $\operatorname{Pf}{(tJ+A)}$ for $A\in S(2n,\mathbb{R})$ into a polynomial $g(t)$ with $\deg(g)=n$, such that
\begin{equation}
    \operatorname{Pf}{(tJ+A)}=t^n+t^{n-1}\sigma_{n-1}(A)+t^{n-2}\sigma_{n-2}(A)+\cdot\cdot\cdot+\sigma_0(A)
\end{equation}
where $\sigma_k$ is the coefficient of $t^k$.
\begin{definition}
Given a two form $\omega$ associated to any skew-symmetric $2n\times2n$ matrice $A={a_{ij}}$, such that 
\begin{equation*}
\omega=\sum_{i<j} a_{ij}e_i\wedge e_j.  
\end{equation*}
where $\{e_1, e_2,...,e_{2n}\}$ is the standard basis of $\mathbb{R}^2n$. The pfaffian of matrix $A$ is then defined as
\begin{equation}
    \frac{1}{n!}\omega^n=\operatorname{Pf}(A)e_1\wedge e_2\wedge\cdot\cdot\cdot \wedge e_{2n}
\end{equation}
\end{definition}
Accordingly, we can use the above definition of pfaffian in terms of 2-forms to prove the following result.
\begin{Proposition}
$\sigma_k$ is invariant under the group action $\rho$ which means that $\sigma_k(P^TAP)=\sigma_k(A)$ for any $A\in S(2n,R)$ and $P\in \Sp(2n)$.
\end{Proposition}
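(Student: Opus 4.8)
The plan is to exploit the fact that $\sigma_k(A)$ is packaged as a single coefficient of the polynomial $\operatorname{Pf}(tJ + A)$, and to show that this \emph{entire} polynomial is invariant under $\rho$. The crucial observation is that the symplectic condition $P^T J P = J$ lets the term $tJ$ be absorbed into the conjugation: for any $P \in \Sp(2n)$,
\[
tJ + P^T A P = t\,P^T J P + P^T A P = P^T (tJ + A) P .
\]
Thus conjugating $A$ by $P$ and then forming the pencil $tJ + (\,\cdot\,)$ is the same as forming the pencil first and conjugating afterward.

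The main steps then run as follows. First I would note that $tJ + A$ is skew-symmetric, being a sum of skew-symmetric matrices, so the Pfaffian multiplicativity of Lemma 3.1 applies to it and gives
\[
\operatorname{Pf}(tJ + P^T A P) = \operatorname{Pf}\bigl(P^T (tJ + A) P\bigr) = \det(P)\,\operatorname{Pf}(tJ + A).
\]
Next, since $P \in \Sp(2n)$, Lemma 3.2 yields $\det(P) = 1$, whence $\operatorname{Pf}(tJ + P^T A P) = \operatorname{Pf}(tJ + A)$ as polynomials in $t$. Finally, comparing the coefficient of $t^k$ on both sides, using the defining polynomial expansion of $\operatorname{Pf}(tJ+A)$, yields $\sigma_k(P^T A P) = \sigma_k(A)$ for every $0 \le k \le n-1$, which is exactly the claimed invariance.

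The one point requiring care is the scope of Lemma 3.1. As stated it concerns $A \in S(2n,\R)$, i.e.\ \emph{invertible} skew-symmetric matrices, whereas the pencil $tJ + A$ is singular for the finitely many values of $t$ that are roots of $\det(tJ + A)$. The resolution is that the identity $\operatorname{Pf}(P^T M P) = \det(P)\operatorname{Pf}(M)$ is a polynomial identity in the entries of $M$, valid for every skew-symmetric $M$ regardless of invertibility; equivalently, both sides of $\operatorname{Pf}(tJ + P^T A P) = \det(P)\operatorname{Pf}(tJ + A)$ are polynomials in $t$ that agree on the cofinite set where $tJ + A$ is invertible, and two polynomials agreeing at infinitely many points coincide identically. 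I expect this polynomial-identity step to be the only genuine obstacle; the algebra is immediate once the absorption identity $tJ + P^T A P = P^T(tJ+A)P$ is in hand.

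An alternative route uses the $2$-form definition of the Pfaffian given just above the statement: if $\omega$ is the $2$-form of $tJ+A$, then $\tfrac{1}{n!}\omega^n = \operatorname{Pf}(tJ+A)\,e_1 \wedge \cdots \wedge e_{2n}$, and pulling back by the linear map $P$ multiplies the top form $e_1 \wedge \cdots \wedge e_{2n}$ by $\det(P) = 1$ while turning $\omega$ into the $2$-form of $P^T(tJ+A)P = tJ + P^TAP$. This gives the same polynomial equality without invoking invertibility, so it sidesteps the density argument entirely; either formulation suffices, and I would present the matrix version as primary with the $2$-form version as a remark.
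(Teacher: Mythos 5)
Your proof is correct and follows essentially the same route as the paper's: absorb $tJ$ into the conjugation via $P^TJP=J$, apply the Pfaffian multiplicativity of Lemma 3.1 together with $\det(P)=1$ from Lemma 3.2 to conclude $\operatorname{Pf}(tJ+P^TAP)=\operatorname{Pf}(tJ+A)$, and compare coefficients of $t^k$. Your added remark that the identity $\operatorname{Pf}(P^TMP)=\det(P)\operatorname{Pf}(M)$ must be read as a polynomial identity valid for all skew-symmetric $M$ (not only invertible ones, since $tJ+A$ can be singular for finitely many $t$) addresses a point the paper silently glosses over and is worth keeping.
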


\begin{proof}
According to lemma 7.3 and 7.4, we have $\operatorname{Pf}(tJ +A) = \operatorname{Pf}(P^T (tJ +A)P)$ for any $P\in \Sp(2n)$. Consequently, $P^TJP=J$, and then

\begin{align}
    \operatorname{Pf}(tJ+A)
    &=\operatorname{Pf}(P^T(tJ+A)P)\\
    &=\operatorname{Pf}(tJ+P^TAP)\\
    &=t^n+t^{n-1}\sigma_{n-1}(P^TAP)+t^{n-2}\sigma_{n-2}(P^TAP)+\cdot\cdot\cdot+\sigma_0(P^TAP)
\end{align}
Comparing the coefficient of formula (1) to formula (4), we obtain that $\sigma_k(A)=\sigma_k(P^TAP)$, for $k=0,... n-1$. 
\end{proof}
Once we can prove that $\sigma_k=s_k$, we are done with the above proposition on invariant. 
Let $\mathbf{N}=\{1,2,...,2n\}$, for any $A=\{a_{ij}\}_{1\le i,j \le 2n}\in S(2n, \mathbb{R})$, we define the associated 2-form $\omega$ such that
\[
\omega=\sum_{\substack{i<j \\ {i,j}\in \mathbf{N}}} a_{ij}dx_i\wedge dx_j.
\]
where the set $\{dx_1, dx_2,..., dx_{2n}\}$ is the standard basis for the vector space $(T_p(\mathbb{R}^{2n}))^*$. Then the pfaffian of A is determined by the following identity.
\begin{equation}
  \frac{1}{n!}\omega^n=\operatorname{Pf}(A)dx_1\wedge dx_2\wedge\cdot\cdot\cdot\wedge dx_{2n}
\end{equation}
where $\omega^n=\omega\wedge\cdot\cdot\cdot\wedge \omega$.

\begin{Proposition}Given coefficient $\sigma_k$ of term $t^k$ in the polynomial expansion of $\operatorname{Pf}(tJ+A)$, standard symplectic form $\omega_0$ and 2-form $\omega$ associated with $A$, we have
\begin{equation}
    \sigma_kdx_1\wedge dx_2\wedge\cdot\cdot\cdot\wedge dx_{2n}=\frac{1}{k!(n-k)!}\omega_0^k\wedge \omega^{n-k}
\end{equation}
\end{Proposition}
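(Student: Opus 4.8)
The plan is to apply the Pfaffian-as-top-wedge-power identity not to $A$ itself but to the one-parameter family of skew-symmetric matrices $tJ + A$, treating $t$ as a formal parameter, and then to read off the claimed formula by comparing coefficients of $t^k$ on the two sides.

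First I would record that the assignment sending a skew-symmetric matrix to its associated $2$-form is linear, that the $2$-form associated with $J$ is precisely $\omega_0 = \sum_{i=1}^{n} dx_{2i-1}\wedge dx_{2i}$, and that the $2$-form associated with $A$ is $\omega$. Hence the $2$-form associated with $tJ + A$ is $t\omega_0 + \omega$. Applying the defining identity, namely $\frac{1}{n!}\,\eta^n = \operatorname{Pf}(B)\,dx_1\wedge\cdots\wedge dx_{2n}$ for the $2$-form $\eta$ of a skew-symmetric matrix $B$, to the choice $B = tJ+A$ and $\eta = t\omega_0+\omega$ gives
\[
\frac{1}{n!}\,(t\omega_0+\omega)^n = \operatorname{Pf}(tJ+A)\,dx_1\wedge\cdots\wedge dx_{2n}.
\]
Since both sides are polynomials in $t$ with coefficients valued in the one-dimensional space of top-degree forms, this is a legitimate identity of polynomials, and the possible degeneracy of $tJ+A$ at isolated values of $t$ causes no difficulty.

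Next I would expand the left-hand side by the binomial theorem. Because $\omega_0$ and $\omega$ are both $2$-forms, hence of even degree, they commute under the wedge product, so
\[
(t\omega_0+\omega)^n = \sum_{k=0}^{n}\binom{n}{k}\,t^k\,\omega_0^k\wedge\omega^{n-k}.
\]
On the right-hand side I would substitute the polynomial expansion $\operatorname{Pf}(tJ+A) = \sum_{k=0}^{n}\sigma_k(A)\,t^k$ (with leading coefficient $\sigma_n = 1$) recorded earlier. Equating the coefficients of $t^k$ yields $\frac{1}{n!}\binom{n}{k}\,\omega_0^k\wedge\omega^{n-k} = \sigma_k(A)\,dx_1\wedge\cdots\wedge dx_{2n}$, and the elementary identity $\frac{1}{n!}\binom{n}{k} = \frac{1}{k!(n-k)!}$ rearranges this into exactly the claimed equation.

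The argument is almost entirely formal, so I expect no serious obstacle. The two places deserving explicit justification are the linearity of the matrix-to-$2$-form correspondence, which is what produces the substitution $tJ+A \mapsto t\omega_0+\omega$, and the commutativity of even-degree forms, which is what legitimizes the binomial expansion. The only genuinely conceptual point is that the top-wedge-power identity must be invoked in its general form, valid for \emph{every} skew-symmetric matrix rather than only for nondegenerate ones, and that the comparison of coefficients is to be read as an equality of polynomials in $t$ valued in the one-dimensional span of $dx_1\wedge\cdots\wedge dx_{2n}$.
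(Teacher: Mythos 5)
Your proposal is correct and follows essentially the same route as the paper's own proof: associate $tJ+A$ with the $2$-form $t\omega_0+\omega$, apply the top-wedge-power identity $\frac{1}{n!}(t\omega_0+\omega)^n=\operatorname{Pf}(tJ+A)\,dx_1\wedge\cdots\wedge dx_{2n}$, expand by the binomial theorem, and compare coefficients of $t^k$. Your version is slightly more careful in making explicit the linearity of the matrix-to-form correspondence, the commutativity of even-degree forms, and the reading of the comparison as a polynomial identity, but these are refinements of the identical argument.
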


\begin{proof}
For the standard symplectic form $\omega_0$ associated with the matrix representation $J\in S(2n, \mathbb{R})$, we have 
\begin{equation*}
  \omega_0=\sum_{i=1}^{n}dx_{2i-1}\wedge dx_{2i} 
\end{equation*}
   Now we consider the matrix $tJ+A$ which has the associated 2-form $t\omega_0+\omega$. By formula (6), we then have 
\begin{equation*}
   \frac{1}{n!}(t\omega_0+\omega)^n=\operatorname{Pf}(tJ+A)dx_1\wedge dx_2\wedge\cdot\cdot\cdot\wedge dx_{2n}   
\end{equation*}
   Using the binomial theorem, we can then expand the $\frac{1}{n!}(t\omega_0+\omega)^n$ and note that 
\begin{equation}
    \frac{1}{n!}(t\omega_0+\omega)^n=\sum_{k=0}^{n}\frac{1}{k!(n-k)!}t^k\omega_0^k\wedge \omega^{n-k}
\end{equation}
    Comparing the formula (8) with formula  (5), we can then conclude that 
\begin{equation}
    \sigma_kdx_1\wedge dx_2\wedge\cdot\cdot\cdot\wedge dx_{2n}=\frac{1}{k!(n-k)!}\omega_0^k\wedge \omega^{n-k}
\end{equation}
\end{proof}
Therefore, the equivalence of $\sigma_k(A)=s_k(A)$ is given by the following Proposition.
\begin{Proposition}
     For $k=0,1$, we have 
     \begin{equation}
         \frac{1}{k!(n-k)!}\omega_0^k\wedge \omega^{n-k}=s_k dx_1\wedge dx_2\wedge\cdot\cdot\cdot\wedge dx_{2n}
     \end{equation}
\end{Proposition}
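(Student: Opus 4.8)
The plan is to dispatch the two cases separately, since $k=0$ is essentially a restatement of the defining Pfaffian identity while $k=1$ requires genuinely unwinding the wedge product. For $k=0$ I would observe that $\omega_0^0 = 1$ and $0!\,n! = n!$, so the left-hand side is exactly $\frac{1}{n!}\omega^n$, which equals $\operatorname{Pf}(A)\,dx_1\wedge\cdots\wedge dx_{2n}$ by the defining identity (6). On the other side, the only subset $S\subseteq\{1,\ldots,n\}$ with $|S|=0$ is $S=\emptyset$, and $A_\emptyset = A$, so $s_0(A) = \operatorname{Pf}(A_\emptyset) = \operatorname{Pf}(A)$. Hence both sides agree and the $k=0$ case is immediate. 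Combined with the previous Proposition, this gives $\sigma_0 = s_0 = \operatorname{Pf}(A)$.

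For $k=1$, I would first distribute the wedge over the sum $\omega_0 = \sum_{i=1}^n dx_{2i-1}\wedge dx_{2i}$, writing $\frac{1}{(n-1)!}\,\omega_0\wedge\omega^{n-1} = \frac{1}{(n-1)!}\sum_{i=1}^n (dx_{2i-1}\wedge dx_{2i})\wedge\omega^{n-1}$. The key step is then, for each fixed $i$, to split $\omega = \hat\omega_i + \beta_i$, where $\beta_i$ collects exactly those terms of $\omega$ that contain the factor $dx_{2i-1}$ or $dx_{2i}$, and $\hat\omega_i$ is the $2$-form in the remaining $2n-2$ variables associated with the submatrix $A_{\{i\}}$. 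Wedging with $dx_{2i-1}\wedge dx_{2i}$ annihilates every term of $\omega^{n-1}$ that carries a factor of $\beta_i$, since such a term repeats a differential $dx_{2i-1}$ or $dx_{2i}$; this leaves $(dx_{2i-1}\wedge dx_{2i})\wedge\omega^{n-1} = (dx_{2i-1}\wedge dx_{2i})\wedge\hat\omega_i^{\,n-1}$.

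Next I would invoke the Pfaffian identity (6) in dimension $2n-2$ applied to $\hat\omega_i$: namely $\frac{1}{(n-1)!}\hat\omega_i^{\,n-1} = \operatorname{Pf}(A_{\{i\}})\bigwedge_{j\neq 2i-1,2i} dx_j$, the product taken in increasing order of $j$. Substituting this and reordering $(dx_{2i-1}\wedge dx_{2i})\wedge\bigwedge_{j\neq 2i-1,2i}dx_j$ into the canonical order $dx_1\wedge\cdots\wedge dx_{2n}$ yields $\operatorname{Pf}(A_{\{i\}})\,dx_1\wedge\cdots\wedge dx_{2n}$ for each $i$. Summing over $i$, the factor $(n-1)!$ cancels the prefactor $\frac{1}{(n-1)!}$, and recognizing $s_1(A) = \sum_{i=1}^n \operatorname{Pf}(A_{\{i\}})$ delivers the claimed equality. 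I would close with the small verification that for $n=2$ this recovers $a_{12}+a_{34}$, i.e.\ the sum function, consistent with the stated identification $s_{n-1}$.

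The hard part will be the sign bookkeeping in the reordering step: I must confirm that moving the degree-two block $dx_{2i-1}\wedge dx_{2i}$ past each of the $2i-2$ one-forms $dx_1,\ldots,dx_{2i-2}$ introduces no sign, because an even-degree form commutes with each $dx_j$ (every such pass contributes $(-1)^2 = 1$). Once this is checked the coefficient is exactly $+1$ for every $i$, so no alternating signs appear in the final sum. The only other point requiring care is justifying that the cross terms involving $\beta_i$ truly vanish, which reduces to the elementary fact $dx_\ell\wedge dx_\ell = 0$ applied to the repeated differential; I expect this to be routine once the decomposition $\omega = \hat\omega_i + \beta_i$ is made explicit.
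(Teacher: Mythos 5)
Your proposal is correct and follows essentially the same route as the paper's proof: distribute $\omega_0$ over the sum, observe that wedging with $dx_{2i-1}\wedge dx_{2i}$ kills every monomial of $\omega^{n-1}$ containing those differentials so that $\omega$ may be replaced by the truncated form associated with $A_{\{i\}}$, apply the Pfaffian identity in dimension $2n-2$, and sum over $i$. If anything, your write-up is more careful than the paper's, since you make the decomposition $\omega=\hat\omega_i+\beta_i$ explicit and verify the sign bookkeeping in the reordering step, both of which the paper leaves implicit.
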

\begin{proof}
For $k=0$, formula (10) holds trivially which is just the formula (6).\\
For $k=1$, we have 
\begin{equation}
    \frac{1}{(n-1)!}\omega_0^k\wedge \omega^{n-1}=\frac{1}{(n-1)!}\sum_{i=1}^{n}dx_{2i-1}\wedge dx_{2i}\wedge \omega^{n-1} 
\end{equation}
For each term $dx_{2i-1}\wedge dx_{2i}\wedge \omega^{n-1}$, we note that, only those terms which do not contain $dx_{2i-1}$ or $dx_{2i}$ in $\omega^{n-1}$ term will contribute to the term $dx_{2i-1}\wedge dx_{2i}\wedge \omega^{n-1}$.By this observation, if we construct a new 2-form $\omega_{\hat{i}}$, such that
\begin{equation}
    \omega_{\hat{i}}=\sum_{\substack{k<l \\ {k,l} \in\mathbf{N}\setminus \{2i-1,2i\}}}a_{kl}dx_k\wedge dx_l
\end{equation}
Consequently we would have
\begin{equation*}
    dx_{2i-1}\wedge dx_{2i}\wedge \omega^{n-1}= dx_{2i-1}\wedge dx_{2i}\wedge \omega_{\hat{i}}^{n-1}
\end{equation*}
Consequently by formula (6), we have the following:
\begin{equation}
   \frac{\omega_{\hat{i}}^{n-1}}{(n-1)!}=\operatorname{Pf}(A_{i}) dx_1\wedge dx_2\wedge\cdot\cdot\wedge \widehat{dx_{2i-1}}\wedge \widehat{dx_{2i}} \cdot\cdot\cdot\wedge dx_{2n}
\end{equation}
where the notation $\operatorname{Pf{i}}$ is the pfaffian of the submatrix $A_{S}$ formed by formed by removing rows and columns $2i-1$ and rows and columns $2i$ for all $i \in S$, where $S \subset \{1,2,...,n\}$ with $|S| = k$. In the case of $k=1$, $\operatorname{Pf}(A_{i})$ refer to the pfaffian of some submatrix $A_S$ formed by removing rows and columns $2i-1$ and rows and columns $2i$ for one particular $i \in S$ as $|S| =1$. Therefore, the formula $11$ gives
\begin{equation*}
     \frac{1}{(n-1)!}\omega_0^k\wedge \omega^{n-1}=\sum_{i=1}^n\operatorname{Pf}(A_{i}) \bigwedge_{i=1}^{2n}dx_i=s_1 \bigwedge_{i=1}^{2n}dx_i
\end{equation*}
\end{proof}
Inspired by the proof of invariant when $k=1$, we can derive the following Proposition directly.
\begin{Proposition}
     For $k=0,1...,n-1$, we have 
     \begin{equation}
         \frac{1}{k!(n-k)!}\omega_0^k\wedge \omega^{n-k}=s_k dx_1\wedge dx_2\wedge\cdot\cdot\cdot\wedge dx_{2n}
     \end{equation}
\end{Proposition}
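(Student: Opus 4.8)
The plan is to mirror the $k=1$ argument of the preceding Proposition, replacing the single deleted block by an arbitrary $k$-element deletion set. First I would expand the power $\omega_0^k$. Writing $\eta_i = dx_{2i-1}\wedge dx_{2i}$ so that $\omega_0 = \sum_{i=1}^n \eta_i$, I note that each $\eta_i$ is a $2$-form, hence commutes with every other factor, while $\eta_i\wedge\eta_i = 0$. The multinomial expansion therefore collapses to
\[
\omega_0^k = k!\sum_{\substack{S\subseteq\{1,\dots,n\}\\ |S|=k}}\ \bigwedge_{i\in S}\eta_i,
\]
since for each $k$-element set $S$ the $k!$ orderings of its blocks all yield the same product. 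Dividing by $k!$ and wedging with $\omega^{n-k}/(n-k)!$ reduces the claim to showing that, for each fixed $S$,
\[
\Big(\bigwedge_{i\in S}\eta_i\Big)\wedge \frac{\omega^{n-k}}{(n-k)!} = \operatorname{Pf}(A_S)\, dx_1\wedge dx_2\wedge\cdots\wedge dx_{2n}.
\]

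Next I would carry out the localization step exactly as in the $k=1$ case. Wedging with $\bigwedge_{i\in S}\eta_i$ already consumes every coordinate $dx_{2i-1}, dx_{2i}$ with $i\in S$, so any monomial of $\omega^{n-k}$ containing one of these coordinates is annihilated. Introducing the generalization of $\omega_{\hat i}$, namely the restricted $2$-form
\[
\omega_{\hat S} = \sum_{\substack{k<l\\ k,l\in \mathbf{N}\setminus\bigcup_{i\in S}\{2i-1,2i\}}} a_{kl}\, dx_k\wedge dx_l,
\]
I obtain $\big(\bigwedge_{i\in S}\eta_i\big)\wedge\omega^{n-k} = \big(\bigwedge_{i\in S}\eta_i\big)\wedge\omega_{\hat S}^{\,n-k}$. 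But $\omega_{\hat S}$ is precisely the $2$-form associated to the submatrix $A_S$ on the $(2n-2k)$-dimensional coordinate subspace obtained by deleting the indices of $S$, so applying formula (6) in dimension $2n-2k$ gives $\frac{1}{(n-k)!}\omega_{\hat S}^{\,n-k} = \operatorname{Pf}(A_S)\bigwedge_{j} dx_j$, the wedge running over the $2n-2k$ surviving coordinates in increasing order.

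Finally I would reassemble the top form and sum over $S$. Since each $\eta_i$ is an even-degree form, it is central in the exterior algebra, so the blocks $\{\eta_i\}_{i\in S}$ may be interleaved freely among the surviving coordinates with no sign cost; each block already appears in the correct internal order $dx_{2i-1}\wedge dx_{2i}$, so the reordering produces exactly $+\,dx_1\wedge\cdots\wedge dx_{2n}$. Hence each $S$ contributes $\operatorname{Pf}(A_S)\,dx_1\wedge\cdots\wedge dx_{2n}$, and summing over all $k$-subsets $S$ yields $s_k(A)\,dx_1\wedge\cdots\wedge dx_{2n}$, as claimed. The one place that genuinely needs care—and the step I expect to be the main obstacle—is this sign bookkeeping: one must confirm that the $+1$ sign is uniform across all subsets $S$, which is exactly what the centrality of the $2$-forms $\eta_i$ guarantees, and that the factor $k!$ produced by the multinomial expansion of $\omega_0^k$ cancels the $k!$ in the prefactor. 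The remaining manipulations are the direct analogues of the $k=1$ computation already established.
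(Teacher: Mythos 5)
Your proposal is correct and follows essentially the same route as the paper's own proof: expand $\omega_0^k$ multinomially so that only squarefree terms survive (each $k$-subset $S$ appearing $k!$ times), localize $\omega^{n-k}$ to the restricted form $\omega_{\hat S}$, apply the Pfaffian identity in dimension $2n-2k$, and sum over $S$. If anything, your write-up is more careful than the paper's --- in particular your explicit sign bookkeeping via the centrality of the $2$-forms $\eta_i$ and the cancellation of the $k!$ factor, both of which the paper leaves implicit.
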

\begin{proof}
Construct an increasing posive integer sequence $\{b_i\},i\in \{1,2,...,n\}$, then we have
\begin{align*}
   \frac{1}{k!(n-k)!}\omega_0^k\wedge \omega^{n-k}
   &=\frac{1}{k!(n-k)!}(\sum_{i=1}^{n}dx_{2i-1}\wedge dx_{2i})^k\wedge \omega^{n-k}\\
   &=\frac{1}{(n-k)!}\sum_{i=1}^{n}\bigwedge_{b_i}dx_{2b_i-1}\wedge dx_{2b_i}\wedge \omega^{n-k}\\
   &=\bigwedge_{b_i}dx_{2b_i-1}\wedge dx_{2b_i}\wedge \omega_{\hat{b_i}}^{n-k}.
\end{align*}
where $\omega_{\hat{b_i}}$ is constructed similar as in the case $k=1$, such that 
\begin{equation*}
    \omega_{\hat{b_i}}=\sum_{k_i<l_i}\bigwedge_{b_i}dx_{2b_i-1}\wedge dx_{2b_i}
\end{equation*}
where $\{k_i\}$ and $\{l_i\}$ are two increasing positive sequences such that $\{k_i,l_i\}\in \mathbf{N}\setminus \{2b_i-1,2b_i\}$. Using the similar techniques with $k=1$ case, we would have 
\begin{equation*}
     \frac{\omega_{\hat{a_i}}^{n-k}}{(n-k)!}=\operatorname{Pf}(A_{\{a_i\}}) dx_1\wedge dx_2\wedge\cdot\cdot \bigwedge_{b_i}\widehat{dx_{2b_i-1}\wedge dx_{2b_i}}\cdot\cdot\cdot\wedge dx_{2n}
\end{equation*}
      Consequently we will get 
\begin{equation*}
    \frac{1}{k!(n-k)!}\omega_0^k\wedge \omega^{n-k}=\operatorname{Pf}(A_{a_i}) \bigwedge_{i=1}^{2n}dx_i=s_k(A)\bigwedge_{i=1}^{2n}dx_i
\end{equation*}
\end{proof}
Consequently, we can show that $\sigma_k=s_k$ which are the invariant. According to the invariant proof of $\sigma_k$, we then get to know that the quantity $s_k$ are also invariants. Hence the theorem 2.8 is justified. 

\section{Higher dimensional nonlinear case}
\subsection{The proof of invariants}

\begin{Proposition}
     Consider symplectic forms 
     \begin{equation*}
         \omega_1=\sum_{i=1}^n f_i(x_i,y_i)dx_i\wedge dy_i
     \end{equation*}
and 
\begin{equation*}
         \omega_2=\sum_{i=1}^n g_i(x_i,y_i)dx_i\wedge dy_i
\end{equation*}
If $\phi \in \Symp(\mathbb{R}^{2n}, \omega_{std})$, such that $\phi^*\omega_1=\omega_2$, $\phi=(\phi^1,\phi^2,...,\phi^{2n})$
Then we would the following equality:
Then the set $\{f_i(\phi^{2i-1},\phi^{2i})\}$ and $\{g_j(x_i,y_i)\}$ for some $i,j$ such that $1<i,j<n$ at every point in $\mathbb{R}^{2n}$. 
\end{Proposition}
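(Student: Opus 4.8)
The plan is to reduce this pointwise to the linear orbit problem and then invoke the \emph{entire} list of invariants $s_0, \ldots, s_{n-1}$ from Theorem 2.8, which together pin down the multiset of the $2\times 2$ block coefficients. I read the conclusion as the natural generalization of Lemma 6.1: at every $x \in \R^{2n}$ the multisets $\{f_i(\phi^{2i-1}(x), \phi^{2i}(x)) : 1 \le i \le n\}$ and $\{g_j(x_j, y_j) : 1 \le j \le n\}$ coincide.

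First I would fix $x \in \R^{2n}$ and set $P = d\phi_x$. Since $\phi \in \Symp(\R^{2n}, \omegastd)$ and $\omegastd$ has constant matrix representation $J$, evaluating $\phi^* \omegastd = \omegastd$ at $x$ gives $(d\phi_x v)^T J (d\phi_x w) = v^T J w$ for all $v,w$, i.e.\ $P^T J P = J$, so $P \in \Sp(2n)$. Likewise the hypothesis $\phi^*\omega_1 = \omega_2$, read through the pullback formula at the level of matrix representations, yields $P^T \Omega_1(\phi(x)) P = \Omega_2(x)$ (this is exactly the reduction already carried out in the Proposition opening Section 6). Hence $\Omega_1(\phi(x))$ and $\Omega_2(x)$ lie in one orbit of $\rho$, and by Theorem 2.8 every invariant agrees: $s_k(\Omega_1(\phi(x))) = s_k(\Omega_2(x))$ for all $0 \le k \le n-1$.

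Next I would evaluate these invariants explicitly. Writing $a_i = f_i(\phi^{2i-1}(x), \phi^{2i}(x))$ and $b_j = g_j(x_j, y_j)$, both $\Omega_1(\phi(x))$ and $\Omega_2(x)$ are block-diagonal with $2\times 2$ blocks $a_i J_0$ and $b_j J_0$. Because the Pfaffian is multiplicative over block-diagonal direct sums and the submatrix $A_S$ retains precisely the blocks indexed by $i \notin S$, one gets $\operatorname{Pf}\bigl((\Omega_1(\phi(x)))_S\bigr) = \prod_{i \notin S} a_i$; summing over all $S$ with $|S| = k$ gives $s_k(\Omega_1(\phi(x))) = e_{n-k}(a_1,\ldots,a_n)$, the degree-$(n-k)$ elementary symmetric polynomial, and similarly $s_k(\Omega_2(x)) = e_{n-k}(b_1,\ldots,b_n)$. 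As $k$ runs over $0,\ldots,n-1$, the index $n-k$ runs over $1,\ldots,n$, so every nontrivial elementary symmetric polynomial of the $a_i$ equals that of the $b_j$. By Vieta's formulas the monic polynomials $\prod_i (t - a_i)$ and $\prod_j (t - b_j)$ then coincide, forcing equality of the multisets $\{a_i\}$ and $\{b_j\}$; since $x$ was arbitrary, the claimed equality holds everywhere.

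The main obstacle is less computational than conceptual: one must recognize that the Pfaffian and sum function alone (Theorem 2.4), which suffice in the $n=2$ setting of Lemma 6.1 because $a_1 a_2$ and $a_1 + a_2$ already determine $\{a_1, a_2\}$, are \emph{not} enough for larger $n$; recovering the full multiset genuinely requires all of $e_1,\ldots,e_n$, which is exactly why the stronger Theorem 2.8 must be invoked here. The only other point needing care is the identity $s_k = e_{n-k}$ for block-diagonal matrices, which follows from multiplicativity of the Pfaffian but should be stated cleanly so that the reduction to symmetric-function equality is transparent.
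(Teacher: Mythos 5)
Your proposal is correct and follows essentially the same route as the paper: the paper likewise reduces to $P^T\Omega_1(\phi(x))P=\Omega_2(x)$ with $P=d\phi_x\in\Sp(2n)$, uses the invariance of $\operatorname{Pf}(tJ+\cdot)$ to get $\operatorname{Pf}(tJ+\Omega_1(\phi(x)))=\operatorname{Pf}(tJ+\Omega_2(x))$, and factors both sides of this polynomial identity as $\prod_i\bigl(t+f_i(\phi^{2i-1},\phi^{2i})\bigr)=\prod_i\bigl(t+g_i(x_i,y_i)\bigr)$, which is exactly your Vieta/elementary-symmetric-polynomial step in a different phrasing. Your write-up is in fact more explicit than the paper's (it spells out $s_k=e_{n-k}$ for block-diagonal matrices and correctly flags that the $n=2$ argument of Lemma 6.1 does not suffice for general $n$), but the underlying mechanism is identical.
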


\begin{proof}
    Given $\Omega_1(x)$ and $\Omega_2(x)$ are matrix representation of $\omega_1$ and $\omega_2$ respectively, then we would have
\begin{equation}
  \operatorname{Pf}{(tJ+\Omega_1(\phi(x))}=\operatorname{Pf}{(tJ+\Omega_2(x))}
\end{equation}
Since both $\Omega_1(\phi(x))$ and $\Omega_2(x)$ are block diagonal matrices and according to our proof of $n$ invariant in $\mathbb{R}^{2n}$, we can conclude that the invariant $s_i(n)$= $(n-i)$th symmetric sum of zeros of $P(t)$, formulated as below:
\begin{equation}
 P(t)=\prod_{i=1}^n(t+f_i(\phi^{2i-1},\phi^{2i}))=\prod_{i=1}^n(t+g_i(x_i,y_i))
\end{equation}
for all $t\in \mathbb{R}^{2n}$. 
\end{proof}
Hence the theorem 2.5 is justified by the above proposition. In the similar way as $\R^4 cases$, for $x\in \mathbb{R}^{2n}$ we can define 
\[m_{\omega}(x) = \min\{f_i(x_i,y_i), 1\le i \le n\}\]
and 
\[M_{\omega}(x) = \max\{f_i(x_i,y_i), 1\le i \le n\}.\]
In other words, if $\omega_1$ is $\omegastd$-symplectomorphic to the symplectic form
\[\omega_2 =\sum_{i=1}^n g_i(x_i,y_i)dx_i\wedge dy_i\]

where $g_i$ are nowhere vanishing, then the four global invariants of $\omega_1$ and $\omega_2$ must match.

\begin{itemize}
    \item $\inf \{m_{\omega}(x) \mid x \in \mathbb{R}^{2n}\}$,
    
    \item $\sup\{m_{\omega}(x) \mid x \in \mathbb{R}^{2n}\}$,
    
    \item $\inf\{M_{\omega}(x) \mid x \in \mathbb{R}^{2n}\}$,
    
    \item $\sup\{M_{\omega}(x) \mid x \in \mathbb{R}^{2n}\}$.
\end{itemize}
Consequently, the theorem 2.6 is justified by the above natural constructions as well. 

\section{Prospectives}

In this section, we present conjectures and questions for further investigation. We shall begin with propsectives regarding the linear case. Our first conjecture stems from our observation that in $\R^4$, the Pfaffian and sum function invariants determine the Orbit Space Classification of $S(4,\R) / \Sp(4)$.

\begin{conjecture}

The $n$ invariants described in Conjecture 1 determine the orbit space $S(2n,\R) / \Sp(2n)$. In order words, if $A, B \in S(2n,\R)$ with $A \neq cJ$ for all $c \in \R$ and $B \neq cJ$ for all $c \in \R$ and if $\operatorname{s_k}(A) = \operatorname{s_k}(B)$ for all $0 \le k \le n-1$, then $A$ and $B$ are equivalent under the group action $\rho$.

\end{conjecture}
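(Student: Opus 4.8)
The plan is to convert the symplectic congruence action into an honest conjugation action and then appeal to normal-form theory. Since $J$ is invertible, associate to each $A \in S(2n,\R)$ the operator $K = J^{-1}A$. Using $P^T J P = J$ one computes $J^{-1}P^T = P^{-1}J^{-1}$, so that $A \mapsto P^T A P$ corresponds to $K \mapsto P^{-1} K P$; thus classifying $S(2n,\R)$ under $\rho$ is the same as classifying the operators $K$ under $\Sp(2n)$-conjugation. Moreover the skew-symmetry of $A$ is exactly the condition $K^T J = J K$, i.e. $K$ is self-adjoint with respect to $\omegalstd$. Finally, $tJ + A = J(tI + K)$ together with $\det J = 1$ gives $\operatorname{Pf}(tJ+A)^2 = \det(tI+K)$, so the $n$ invariants are the coefficients of the polynomial $\prod_{i=1}^n(t+\mu_i)$ whose square is the characteristic polynomial of $K$. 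Hence the hypothesis ``$s_k(A)=s_k(B)$ for all $k$'' says precisely that $K_A$ and $K_B$ have equal characteristic polynomial, and the conjecture becomes: two $\omegalstd$-self-adjoint operators with the same characteristic polynomial, neither a scalar multiple of the identity, are $\Sp(2n)$-conjugate.

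First I would dispose of the generic case. Since eigenvectors of an $\omegalstd$-self-adjoint operator for distinct eigenvalues are $\omegalstd$-orthogonal, when the roots $\mu_i$ are distinct the space splits as a symplectic-orthogonal sum of $K$-invariant, $\omegalstd$-nondegenerate blocks (two-dimensional for a real root, four-dimensional for a conjugate pair), and $A$ reduces to a standard direct sum determined by the multiset $\{\mu_i\}$; here the characteristic polynomial alone determines the orbit. This also indicates the general strategy: decompose $\R^{2n}$ into the symplectic-orthogonal sum of the generalized eigenspaces of $K$ and induct on $n$, using that $\operatorname{Pf}$ of the pencil is multiplicative over such splittings. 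This reduces the problem to the case in which $K$ has a single real eigenvalue $\mu$, or a single conjugate pair $\mu,\bar\mu$.

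It remains to handle the repeated-eigenvalue blocks, and this is where the genuine work lies. After subtracting $\mu I$ one classifies the nilpotent $\omegalstd$-self-adjoint part under symplectic conjugation; this is the heart of the four-dimensional argument, where Lemma 4.3 collapses every non-scalar representative of fixed $(\operatorname{Pf},\operatorname{s})$ to one orbit through an explicit chain of symplectic transformations (Cases 1--5). I would attempt to generalize that computation by induction on block size, building for an arbitrary non-scalar $A$ a symplectic basis whose basis-values match those of a fixed canonical representative and then invoking the Equivalence Lemma in $\R^{2n}$. In this picture the exclusion $A \neq cJ$ is exactly what discards the lone degenerate representative $K = \mu I$ that cannot be connected to the others.

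The main obstacle is precisely this final step, and I expect it to be the crux of any proof—indeed it may force a refinement of the conjecture for $n \ge 3$. The symplectic classification of self-adjoint operators (Williamson's normal forms, equivalently the Gohberg--Lancaster--Rodman sign-characteristic theory for operators self-adjoint in an indefinite inner-product space) records, beyond the characteristic polynomial, two further invariants on each repeated eigenvalue: the partition of Jordan block sizes (the elementary divisors) and a sign characteristic attached to each block. For $n=2$ there is essentially one non-scalar configuration per eigenvalue, which is exactly why Theorem 2.2 holds; but for $n \ge 3$ a single repeated root of the Pfaffian polynomial already admits several distinct Jordan/sign types, none of which is the excluded scalar. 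A proof must therefore either exhibit explicit symplectic interpolations merging all such configurations into one orbit—extending the Case analysis of Lemma 4.3—or else isolate the surviving elementary-divisor and sign invariants and adjust the statement accordingly.
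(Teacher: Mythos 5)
The paper does not prove this statement: it is Conjecture 2 in the Prospectives section, so there is no proof of record to compare yours against, and the only question is whether your plan could succeed. Your reduction is correct as far as it goes. Setting $K=J^{-1}A$ does convert $\rho$-congruence into $\Sp(2n)$-conjugacy of $\omegalstd$-self-adjoint operators (if $P^TAP=B$ with $P\in\Sp(2n)$ then $K_B=P^{-1}K_AP$), the identity $\operatorname{Pf}(tJ+A)^2=\det(tI+K)$ holds, and by the paper's Section 7.2 the $s_k$ are exactly the coefficients $\sigma_k$ of $\operatorname{Pf}(tJ+A)$; so the hypothesis $s_k(A)=s_k(B)$ for all $k$ is precisely equality of the characteristic polynomials of $K_A$ and $K_B$. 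The distinct-root case and the symplectic-orthogonal splitting into generalized eigenspaces are also sound.

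The final step, however, is not merely the hard part: it fails, and the obstruction you name is an actual counterexample for $n\ge 3$, so no extension of the case analysis of Lemma 4.3 can close the gap. Take $n=3$ and force $\operatorname{Pf}(tJ+A)=(t+1)^3$, i.e.\ $K$ unipotent. The elementary divisors of $K$ at a fixed eigenvalue come in pairs, so the admissible Jordan types correspond to partitions of $3$: the partition $\{1,1,1\}$ is $K=I$, i.e.\ the excluded scalar $A=J$, but both $\{2,1\}$ and $\{3\}$ are realized by non-scalar elements of $S(6,\R)$. For $\{2,1\}$ one may take the block sum $A_1=B\oplus J_0$ with $B\in S(4,\R)$ non-scalar, $\operatorname{Pf}(B)=1$, $\operatorname{s}(B)=2$ (for instance $B$ with $a=f=b=1$ and $c=d=e=0$), so that $\operatorname{Pf}(tJ+A_1)=(t+1)^2(t+1)$; for $\{3\}$ one takes the indecomposable $6$-dimensional pair of alternating forms whose $K$ has two Jordan blocks of size $3$ at eigenvalue $1$ (e.g.\ $K(u_1,u_2)=((I+N)u_1,(I+N^T)u_2)$ on $\R^3\oplus\R^3$ with the pairing $\omega((u_1,u_2),(v_1,v_2))=\langle u_1,v_2\rangle-\langle u_2,v_1\rangle$, transported to the standard $J$). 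These two matrices have identical $s_k$ and are both non-scalar, yet their $K$'s have different Jordan forms and so are not conjugate in $GL(6,\R)$, let alone $\Sp(6)$; hence they lie in distinct $\rho$-orbits. The conjecture as stated is therefore false for $n\ge 3$, and the correct outcome of your approach is a refined classification recording the elementary divisors (and the sign characteristic at real eigenvalues) alongside the $s_k$ --- you should present the example above as a disproof rather than defer it as an obstacle.
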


For example, Conjecture 2 implies that the orbit space $S(6,\R) / \Sp(4)$ consists of the following families of distinct orbits:

\begin{itemize}

    \item For all $p > 0$, 
    \[\mathcal{J}_p^{+} = \{\sqrt{p}J\}\]
    
    \item For all $p > 0$, 
    \[\mathcal{J}_p^{-} = \{-\sqrt{p}J\}\]
    
    \item For all $p > 0$, $q \in \R$, $r \in \R$,
    \[\mathcal{A}_{p,q,r}^{+} = \{ A \in S(6,\R) \mid A \not = \pm\sqrt{p} J, \operatorname{Pf}(A) = p, \operatorname{s_1}(A) = q, \operatorname{s_2}(A) = r\}\]
    
    \item For all $p < 0$, $q \in \R$, $r \in \R$,
    \[\mathcal{A}_{p,q,r}^{-} = \{ A \in S(6,\R) \mid \operatorname{Pf}(A) = p, \operatorname{s_1}(A) = q, \operatorname{s_2}(A) = r\}\]
    
\end{itemize}

We now turn to prospectives regarding the equivalence of nonlinear symplectic forms. The first asks if Question 6 can be salvaged.

\begin{question}
    
    Let $f_1, f_2$ be functions, and define the symplectic forms
    \[\omega_t = f_1(t, x_1, y_1) \mathrm{d} x_1 \wedge \mathrm{d} y_1 + f_2(t, x_2, y_2) \mathrm{d} x_2 \wedge \mathrm{d} y_2, t \in [0, 1].\]
    
    What is a sufficient condition on $f_1, f_2$ under which there exists a smooth path of $\varphi_t \in \Symp(\R^4, \omegastd)$, $t \in [0,1]$, such that $\varphi^* \omega_t = \omega_0$?
    
\end{question}

Finally, we present a question regarding nonlinear symplectic forms on $\R^{2n}$.

\begin{question}

    How do the results of Section 6 generalize to $\R^{2n}$? What do the proposed invariants of Conjectures 1 and 2 imply about nonlinear symplectic forms on $\R^{2n}$?

\end{question}

\pagebreak
\nocite{Lee}
\nocite{Qiaochu}

\bibliographystyle{plain}
\bibliography{references}

\pagebreak
\end{document}